\documentclass{amsart}
\usepackage[T1]{fontenc}
\usepackage[english]{babel}
\usepackage{amsmath,amsthm,amssymb, latexsym,geometry,stackrel,enumerate}
\usepackage[all]{xy}
\usepackage{hyperref}

\usepackage[dvips]{graphicx}
\DeclareGraphicsExtensions{.bmp,.png,.pdf,.jpgg}
\usepackage[all]{xy}
\usepackage{verbatim}
\usepackage[mathcal]{euscript}
\usepackage{epsfig}
\usepackage{multicol}
\usepackage{color}
\usepackage{verbatim}
\usepackage{graphicx,float}
\usepackage{tikz-cd}
\usepackage{adjustbox}

\date{\today}

\swapnumbers
\theoremstyle{plain}
\newtheorem{teo}{Theorem}[section]
\newtheorem{lema}[teo]{Lemma}
\newtheorem{prop}[teo]{Proposition}
\newtheorem{coro}[teo]{Corollary}

\theoremstyle{definition}
\newtheorem{defi}[teo]{Definition}
\newtheorem{obs}[teo]{Remark}

\newtheorem{ejem}[teo]{Example}

\def\End{\mathop{\rm End_{\mathcal{C}_m}}\nolimits}

\input{xy}
\xyoption{poly}
\xyoption{2cell}
\xyoption{all}

\usepackage{url}

\title[The coloured mutation class of $\mathbb{D}_n$-quivers ]{The coloured mutation class of $\mathbb{D}_n$- quivers and their application to $m$-cluster tilted algebras}

\author[V. Gubitosi]{Viviana Gubitosi}
\address{Instituto de Matem\'{a}tica y Estad\'{\i}stica Rafael Laguardia, Facultad de Ingenier\'{\i}a - UdelaR, Montevideo, Uruguay, 11200 }
\email{gubitosi@fing.edu.uy}

\author[C. Qureshi]{Claudio Qureshi}
\address{Instituto de Matem\'{a}tica y Estad\'{\i}stica Rafael Laguardia, Facultad de Ingenier\'{\i}a - UdelaR, Montevideo, Uruguay, 11200 }
\email{cqureshi@fing.edu.uy}

\keywords{coloured  mutation, coloured quivers}
\subjclass{13F60; 16G20; 05E15}

\begin{document}
\maketitle

\begin{abstract}
In this paper, we present an explicit and purely combinatorial characterization of the $m$-coloured quivers that appear within the $m$-coloured mutation class of a quiver of type $\mathbb{D}_n$. The $m$-coloured mutation, as defined by Buan and Thomas in \cite{BT}, generalises the well-known quiver mutation introduced by Fomin and Zelevinsky \cite{FZ}. Consequently, we derive a comprehensive description of the Gabriel quivers associated with $m$-cluster-tilted algebras of type $\mathbb{D}_n$. Notably, our characterization extends a result by Vatne, \cite{Va}, which we recover when $m=1$.
\end{abstract}

\section*{Introduction}

For a given integer $m \geq 1$, $m$-coloured quivers, or simply coloured quivers, were introduced by Buan and Thomas in 2008 \cite{BT}. A coloured quiver $Q$ is defined as a finite quiver where each arrow is assigned a colour $c$ from the set $\{0, \dots, m\}$, subject to the conditions that $Q$ contains no loops and satisfies two additional properties: monochromaticity and skew-symmetry. Notably, any acyclic quiver $Q$ can be interpreted as a coloured quiver by assigning to each arrow of $Q$ the colour $0$ and subsequently adding an arrow of colour $m$ in the reverse direction.\\

Buan and Thomas also defined an operation, termed coloured quiver mutation, on these coloured quivers. They demonstrated that this operation is compatible with the mutation of $m$-cluster tilting objects. In the specific case of $m=1$, the coloured mutation of the induced coloured quiver corresponds to the mutation defined by Fomin and Zelevinsky in \cite{FZ}, commonly referred to as FZ-mutation. Since its introduction, quiver mutation has been the subject of extensive research, as evidenced by works such as \cite{BMR,BPRS,BR,BTo}. The problem of characterizing FZ-mutation classes for quivers of various types has been addressed by Vatne for type $\mathbb{D}_n$ \cite{Va}, Bastian for type $\mathbb{\tilde{A}}_n$ \cite{Ba}, and Buan and Vatne for type $\mathbb{A}_n$ \cite{BV}. Notably, a comparable study focusing on coloured mutations has only been conducted for type $\mathbb{A}_n$  in \cite{GQP} and for type $\mathbb{\tilde{A}}_n$ in \cite{GR}.\\

Two coloured quivers, $Q_1$ and $Q_2$, are considered mutation equivalent if $Q_1$ can be derived from $Q_2$ through a sequence of coloured mutations, and vice versa. An equivalence class defined by this relation is termed a coloured mutation class. Torkildsen demonstrated in \cite{Tork} that the coloured mutation class of a connected acyclic quiver $Q$ is finite if and only if $Q$ is either of Dynkin or extended Dynkin type, or possesses at most two vertices.\\

From an algebraic perspective, coloured quiver mutation provides insights into $m$-cluster tilted algebras, specifically algebras of the form $\text{End}(T)$, where $T$ represents an $m$-cluster tilting object within an $m$-cluster category $\mathcal{C}_m$.\\

It is observed in \cite{BT}  that the $0$-coloured part of the quiver $Q_T$ coincides with the Gabriel quiver of the $m$-cluster-tilted algebra $\End(T)$. Therefore, as a consequence we have that  Gabriel  quivers of  $m$-cluster-tilted algebras
can be combinatorially determined by applying repeated coloured mutations. See \cite[ Corollary 7.2]{BT}.\\

The primary objective of this paper is to provide an explicit characterization of the coloured mutation class of $\mathbb{D}_n$ coloured quivers. Specifically, we aim to present the complete set of coloured quivers derivable through iterative coloured mutations applied to a quiver whose underlying graph is of Dynkin type $\mathbb{D}_n$. Notably, we demonstrate that the quivers within this mutation class are readily identifiable.\\

In Definition \ref{la clase}, we introduce a class, denoted $\mathcal{Q}_{n,m}^D$, of $m$-coloured quivers with $n$ vertices, encompassing all colourations of a quiver of Dynkin type $\mathbb{D}_n$. One of the primary results of this paper is the following.

\subsection*{Theorem A}\textit{ A connected  $m$-coloured quiver $Q$ is mutation equivalent to  $\mathbb{D}_n$ if and only if  $Q$ belongs to the class $\mathcal{Q}_{n,m}^D$. }\\

\medskip

Specifically, by specializing to the case $m=1$, we recover established results presented in \cite{Va}. Furthermore, our characterization of the coloured mutation class of $\mathbb{D}_n$ coloured quivers provides a comprehensive description of quivers associated with $m$-cluster tilted algebras of type $\mathbb{D}_n$. Notably, these quivers remain largely unknown.  Our main result provides a complete combinatorial characterization of these algebras by describing their Gabriel quivers. These quivers are distinguished by the presence of a central cyclic structure, leading to the following classification:\\

\subsection*{Theorem B} \textit{ Let $Q$ be a connected  quiver of an $m$-cluster tilted algebra of type $\mathbb{D}_n$. Then,  $Q$ is an induced subquiver of a quiver $\tilde{Q}$ that satisfies one of the following structural configurations:}

\begin{itemize}
    \item[(a)] \textit{$\tilde{Q}$ contains an oriented cycle $S$ of length $m+3$, where exactly two non-adjacent vertices $a, b \in S_0$ have all their neighbours contained in $S$.}

    \item[(b)] \textit{$\tilde{Q}$ contains a non-oriented cycle $\mathcal{C}$ such that its $(m+1)r-(m-1)$ clockwise arrows are partitioned into $r$ disjoint blocks of length $m+1-k_i$, with $\sum_{i=1}^r k_i = m-1$. In this case, $Q$ is obtained by potentially removing $t$ of these $r$ blocks ($0 \leq t \leq r$).}
\end{itemize}

\textit{In both cases, $\tilde{Q}$ must satisfy that every vertex has in-degree and out-degree at most two, and any additional cycles are oriented of length $m+2$ and do not share arrows with $S$ or the clockwise arrows of $\mathcal{C}$, respectively.}\\

This paper is organised as follows. After a preliminary section to fix notations and recall essential
concepts, Section 2 reviews the definitions of coloured quivers and coloured  mutations. Section 3 is
 devoted to the definition and fundamental properties of mutation classes. In Section 4, we describe a
  special class of coloured quivers with $n$ vertices, denoted by $\mathcal{Q}_{n,m}^D$,  which is
  shown to be the coloured mutation class of type $\mathbb{D}_n$, while Section 5 presents its  main
  properties. Section 6 is dedicated to the proof of Theorem A. Finally, we conclude in Section 7 with
   our main result: a description of the Gabriel quivers of $m$-cluster tilted algebras of type $\mathbb{D}_n$.

\section{Preliminaries}

A  \textit{quiver} (or digraph) $Q$ is the data of two sets, $Q_0$ (the vertices) and $Q_1$ (the arrows); and two maps $s, t : Q_1 \rightarrow Q_0$ that assign to each arrow $\alpha$ its source $s(\alpha)$ and its target $t(\alpha)$. We write $\alpha: s(\alpha)\longrightarrow   t(\alpha)$ for the arrow $\alpha$ from $s(\alpha)$ to $t(\alpha)$. If either $s(\alpha)=i$ or $t(\alpha)=i$, we say that $\alpha$ is incident with the  vertex $i$. For any vertex $i$ in $Q_0$, the \textit{valency} of $i$ (in $Q$) is the number of neighbouring vertices,  i.e., the number of vertices $j\neq i$ such that there exists an arrow $\alpha \in Q_1$ with either $s(\alpha)=i$ and  $t(\alpha)=j$ or $s(\alpha)=j$ and $t(\alpha)=i$.\\

We say that $Q$ is \textit{simple} if there is at most one arrow between two distinct vertex.  In this case, each arrow $\alpha$ is determined by its source $s(\alpha)=i$ and its target $t(\alpha)=j$ and it is usual to denote $\alpha = ij$. A \textit{path} of length $k\geq 0$ in $Q$ is a sequence of arrows $\alpha_1\alpha_2\cdots \alpha_{k}$ such that $\alpha_i=x_i \rightarrow x_{i+1}\in Q_1$ for $1\leq i \leq k$.  A path is called {\it simple} when the vertices $x_1, x_2, \ldots, x_{k+1}$ are pairwise distinct. The path is called {\it closed} when $x_1=x_{k+1}$. A \textit{$k$-cycle} ($k\geq 3$) in  $Q$  is a closed path $\alpha_1\alpha_2\cdots \alpha_{k}$; i.e., $s(\alpha_1)=t(\alpha_{k})$ such that $\alpha_1\alpha_2\cdots \alpha_{k-1}$ is a simple path.
In the case where $Q$ is a simple quiver, we denote by $x_1x_2\cdots x_{k+1}$ the path $\alpha_1\alpha_2\cdots \alpha_{k}$ and by $(x_1x_2\cdots x_{k})$ the $k$-cycle $x_1x_2\cdots x_{k}x_1$. \\

Remember that a subquiver $Q'$ of $Q$ is called {\it induced} if every $\alpha \in Q_1$ such that $s(\alpha),t(\alpha)\in Q'_{0}$ satisfies $\alpha \in Q'_1$.  An \textit{induced cycle} is a cycle which is also an induced subquiver. A \textit{hole} is an induced cycle of length at least four. A quiver $Q$ is called \textit{free-hole}  if it does not contains holes. \

Let $I=\{x_1,\ldots,x_k\} \subseteq Q_0$. We denote by $Q[x_1,\ldots,x_k]$ (or just by $Q[I]$) the subquiver of $Q$ induced by $I$.  A \textit{complete} quiver is a quiver in which every pair of distinct vertices is connected by a pair of unique arrows (one in each direction).

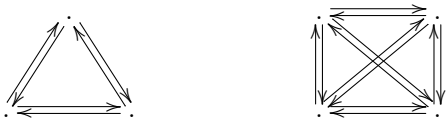
\begin{figure}[H]

\[
\begin{array}{ccc}
  \xymatrix@R=30pt@C=15pt{ &&& . \ar@<0.3ex>[rd] \ar@<0.3ex>[dl] &&  &   \\   && . \ar@<0.3ex>[ur] \ar@<0.3ex>[rr] && . \ar@<0.3ex>[ll] \ar@<0.3ex>[lu] } &  & \xymatrix@R=30pt@C=15pt{ . \ar@<0.3ex>[d] \ar@<0.3ex>[rr] \ar@<0.3ex>[rrd] && . \ar@<0.3ex>[ll] \ar@<0.3ex>[dll] \ar@<0.3ex>[d]  \\   . \ar@<0.3ex>[urr] \ar@<0.3ex>[rr] \ar@<0.3ex>[u] &&  . \ar@<0.3ex>[u] \ar@<0.3ex>[ull] \ar@<0.3ex>[ll] }
\end{array}
\]

\caption{Complete quivers  with $3$ and $4$ vertices respectively.}
\end{figure}

Given two vertices $a$ and $b$ of $Q$, we will say that $Q$ is an \textit{$[a,b]$-quasi-complete} quiver if $Q$ is a quiver in which every pair of distinct vertices is connected by a pair of unique arrows (one in each direction) except for the vertices $a$ and $b$ that they are not connected to each other.

\begin{figure}[H]

\[
\begin{array}{ccccc}
  \xymatrix@R=30pt@C=15pt{  && \\  a   && b   } && \xymatrix@R=30pt@C=15pt{ &&& . \ar@<0.3ex>[rd] \ar@<0.3ex>[dl] &&  &   \\   && a \ar@<0.3ex>[ur]  && b  \ar@<0.3ex>[lu] } &  & \xymatrix@R=30pt@C=15pt{ . \ar@<0.3ex>[d] \ar@<0.3ex>[rr] \ar@<0.3ex>[rrd] && . \ar@<0.3ex>[ll] \ar@<0.3ex>[dll] \ar@<0.3ex>[d]  \\   a \ar@<0.3ex>[urr] \ar@<0.3ex>[u] &&  b \ar@<0.3ex>[u] \ar@<0.3ex>[ull]  }
\end{array}
\]

\caption{$[a,b]$-quasi-complete quivers  with $2$, $3$ and $4$ vertices respectively.}
\end{figure}
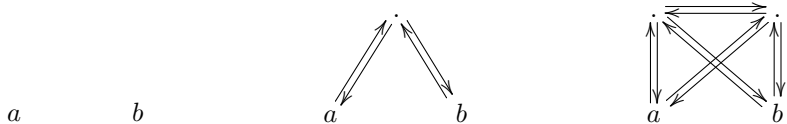

Let $I=\{x_1,\ldots,x_k\} \subseteq Q_0$. We denote by ${}^aQ^b[x_1,\ldots,x_k]$ (or just by ${}^aQ^b[I]$) the $[a,b]$-quasi-complete  subquiver of $Q$ induced by $I$. \\

A \textit{clique} is a subset of vertices $I\subseteq Q_0$ such that $Q[I]$ is a complete quiver. A clique with $k$ vertices is called a $k$-clique. In the following we will identify, as usual, the $k$-clique $K$ with its induced complete quiver $Q[K]$. \\

A maximum clique in $Q$ is a clique with the maximum number of vertices and the \textit{clique number} of $Q$, denoted by $\omega(Q)$, is the number of vertices in a maximum clique.\\

\section{ Coloured quivers and coloured mutations}
We recall in this section the definitions of  coloured quivers and coloured quivers mutation given by Buan and Thomas in \cite{BT}.

\subsection{Coloured quivers}

Let $m$ be a positive integer. An $m$-coloured quiver  $Q$ is the data of two sets, $Q_0$ (the vertices) and $Q_1$ (the arrows); two maps $s, t : Q_1 \rightarrow Q_0$ that assign to each arrow $\alpha$ its source $s(\alpha)$ and its target $t(\alpha)$; and a colouration function $\kappa:Q_1 \rightarrow \{0,1,\ldots, m\}$ which associates to each arrow  $\alpha$ the colour $\kappa(\alpha)$. We write $\alpha: s(\alpha)\overset{(c)}{\longrightarrow}  t(\alpha)$ for the arrow $\alpha$ from $s(\alpha)$ to $t(\alpha)$ of colour $\kappa(\alpha)=c$.\\

Let $q_{ij}^{(c)}$ denote the number of arrows from
$i$ to $j$ of colour $c$. We will say that $Q$ has no loops if $q_{ii}^{(c)} = 0$ for all $c$. The quiver  $Q$ is said to be monochromatic if $q_{ij}^{(c)} \neq 0$, then $q_{ij}^{(c')} = 0$ for $c \neq c'$. Clearly every simple quiver is monochromatic. Finally, we say that $Q$ is skew-symmetric if $q_{ij}^{(c)} = q_{ji}^{(m-c)}$. \\

From now on, we will consider coloured quivers with the above three additional conditions.\

\begin{obs}
If  $Q$ is a simple quiver. Then,  for each pair of vertices  $i$ and  $j$  in $Q_0$  there is at most one pair of arrows $\xymatrix{i \ar@<0.6ex>^{(c_{ij})}[r] & j\ar@<0.6ex>^{(m-c_{ij})}[l] }$  with  $c_{ij}\in \{0, 1, \dots, m \}$. 
\end{obs}

Since all our coloured quivers are skew-symmetric, the colour of every arrow $\xymatrix{v_i \ar[r]^{(c_{ij})} & v_j}$ determines the colour of the arrow $\xymatrix{v_j \ar[r]^{(c_{ji})} & v_i}$ according to the  equation $c_{ji}=m-c_{ij}$. Then, it will cause no confusion if  we  simply draw $\xymatrix{v_i \ar[r]^{c_{ij}} & v_j}$ instead of $\xymatrix{v_i \ar@<0.6ex>^{(c_{ij})}[r] & v_j\ar@<0.6ex>^{(m-c_{ij})}[l] }$.\\

\subsection{Coloured  mutation}

In such a  coloured quiver $Q$,  we have the following operation $\mu_j$ called coloured quiver mutation at  $j$.  Let $j$ be a vertex in $Q$ and let $\mu_j(Q) = \widetilde{Q}$ be the coloured quiver such that

$$\tilde{q}_{ik}^{(c)} =
\begin{cases}  q_{ik}^{(c+1)} & \text{  if $j =k$} \\
		 q_{ik}^{(c-1)} &\text{  if $j=i$} \\
		 \max \{0, q_{ik}^{(c)} - \sum_{t \neq c} q_{ik}^{(t)} + (q_{ij}^{(c)} - q_{ij}^{(c-1)}) q_{jk}^{(0)}
		 + q_{ij}^{(m)} (q_{jk}^{(c)}  -q_{jk}^{(c+1)}) \} & \text{  if $i \neq j \neq k$}
                 \end{cases}
$$

They also  give an alternative description of coloured quiver mutation at vertex $j$ that we recall here.

\subsubsection*{  \hspace*{4cm}    Alternative algorithm for coloured mutation:}

\begin{enumerate}
\item For each pair of arrows
$$ \xymatrix {i \ar^{(c)}[r] & j\ar^{(0)}[r] &k }$$
with $i\ne k$, the arrow from $i$ to $j$ of arbitrary colour $c$, and the arrow from
$j$ to $k$ of colour $0$, add a pair of arrows: an arrow from $i$ to $k$
of colour $c$, and one from $k$ to $i$ of colour $m-c$.

\item If the quiver is not longer monochromatic, because for some pair of vertices $i$ and $k$ there are arrows from
$i$ to $k$ which have two different colours, cancel the same number of
arrows of each colour, until the  monochromaticity property is satisfied.
\item Add one to the colour of any arrow going into $j$ and subtract one
from the colour of any arrow going out of $j$.
\end{enumerate}

Note that  the operations over the colours performed at step $(3)$ have to be done modulo $m+1$.\\

Buan and Thomas proved that the above algorithm is well-defined and
correctly calculates coloured
quiver mutation as previously defined by themselves.\\


\section{Mutation classes }\label{grafo subyacente}

Two quivers are said to be mutation equivalent if one can be obtained from the other by some sequence of coloured mutations, and viceversa. An equivalence class will be called a mutation class. \\

\begin{defi}
 We will say that the underlying graph of a coloured quiver  $Q$ (with no loops, monochromatic and  skew-symmetric)  is the graph obtained by keeping one edge  $\xymatrix{i \ar@{-}[r] & j}$ for each pair of arrows
$\xymatrix{i \ar@<0.6ex>^{(c)}[r] & j \ar@<0.6ex>^{(m-c)}[l]}$ of the coloured quiver $Q$.
\end{defi}

A natural question within the theory of coloured mutations is whether the initial colour configuration restricts the quiver's equivalence class. While the presence of cycles generally imposes such constraints, the landscape simplifies drastically in the acyclic case. In what follows, we provide a  result showing that, for any tree $T$, mutation equivalence is independent of the initial colouring of the associated quivers.

\begin{lema}
Let $Q$ and $Q'$ be two coloured quivers with the same underlying  graph $T$. If $T$ is a tree then $Q$ and $Q'$ are mutation equivalent.
\end{lema}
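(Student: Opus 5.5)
Two coloured quivers with the same underlying tree differ only in the colours $c_{ij}\in\{0,\dots,m\}$ on the edges. The idea is to use mutation at a leaf: it changes only the colour of the unique edge at that leaf, by one modulo $m+1$. By pruning leaves one at a time, we can then adjust every edge independently.

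\textbf{Step 1 (leaf mutation).} Let $\ell$ be a leaf of $T$ with unique neighbour $u$. We compute $\mu_\ell(Q)$ with the alternative algorithm. Step (1) needs a composable pair $i\to \ell \to k$ with $i\neq k$ and the second arrow of colour $0$. Since $\ell$ has only the neighbour $u$, any such pair has $i=k=u$, so no arrows are added. Step (2) is then vacuous. Step (3) changes only the colours of the arrows between $u$ and $\ell$: the colour of $u\to\ell$ becomes $c_{u\ell}+1 \pmod{m+1}$. Hence $\mu_\ell(Q)$ has the same underlying graph $T$, the same colours on every other edge, and the colour of edge $u\ell$ cyclically shifted by one. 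Iterating $\mu_\ell$ up to $m$ times lets us reach any colour on the edge $u\ell$. I will check this directly against the defining formula for $\tilde q^{(c)}_{ik}$. The third case is the only possible source of new arrows, and there the terms $q_{i\ell}q_{\ell k}$ vanish unless $i,k$ are both $u$.

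\textbf{Step 2 (induction on the number of vertices).} We induct on $|T_0|$; the cases of one or two vertices follow from Step 1. Choose a leaf $\ell$ with neighbour $u$, and let $T'=T\setminus\{\ell\}$, which is again a tree. The subquivers $Q[T'_0]$ and $Q'[T'_0]$ share the underlying tree $T'$, so by induction some sequence of mutations at vertices of $T'$ carries the first to the second. We apply the same sequence to $Q$ itself.

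The main obstacle is controlling the effect of these mutations on the extra vertex $\ell$. Mutation at a vertex $j\neq u$ of $T'$ cannot touch $\ell$: there is no arrow between $j$ and $\ell$, so no composable pairs through $j$ involve $\ell$. Mutation at $u$, however, could create arrows between $\ell$ and other neighbours of $u$.

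The fix is to choose the order of operations so that $u$ is never mutated while $\ell$ is attached with a dangerous colour. A cleaner option is to avoid mutating internal vertices altogether. I would strengthen the induction to say that the required mutations are only ever at leaves of the current tree. This works if, instead of deleting a leaf and recursing, we directly adjust each edge by mutating at one of its endpoints that is a leaf. That, however, only handles edges incident to leaves.

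So the main difficulty is changing the colour of an internal edge $uv$ without altering the underlying graph. My first attempt would be to track the composite effect of $\mu_u$ combined with leaf mutations at the neighbours of $u$. Composable pairs through $u$ only occur when an incoming arrow and a colour-$0$ outgoing arrow meet at $u$. By first using leaf-style shifts to make all edges at $u$ other than $uv$ avoid colour $0$ on the arrows leaving $u$, mutation at $u$ changes no edges. It only shifts the colours of all edges at $u$ by one in the appropriate directions. The neighbouring edges can afterwards be restored, provided they are leaf edges or are handled recursively from the leaves inward. Concretely, I would process $T$ by rooting it and fixing colours from the root outward. An internal edge $uv$, with $v$ farther from the root, is set by repeated mutation at $v$ while every other edge at $v$ is kept away from colour $0$ out of $v$ (possible since $m\ge1$ leaves at least one nonzero colour). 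This disturbs only edges in the subtree below $v$, and those are fixed later.
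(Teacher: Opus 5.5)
Your Step 1 is correct, and your first idea in Step 2 (never mutate at $u$ while the edge $u\ell$ has a ``dangerous'' colour) is the paper's strategy. But you leave it undeveloped, and the scheme you adopt instead rests on a false claim. You assert that if every edge at $u$ other than $uv$ has nonzero colour on the arrow leaving $u$, then $\mu_u$ changes no edges. This fails when $\overline{uv}=0$ (the colour of $u\to v$). For each other neighbour $w$, the pair $w\to u\to v$ has colours $m-\overline{uw}$ and $0$, so step (1) adds an arrow $w\to v$. Nothing cancels it, because the reverse pair $v\to u\to w$ would need $\overline{uw}=0$.

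In your root-outward procedure this situation is unavoidable. Each $\mu_v$ lowers $\overline{vu}$ by one, so whenever the target colour exceeds the current one the shift wraps around modulo $m+1$. At some step you then mutate at $v$ with $\overline{vu}=0$ while, by your own prescription, all child edges have nonzero colour out of $v$. That is exactly the configuration that creates new edges $wu$ and destroys the tree. The condition you need is the paper's \emph{stable vertex}: the outgoing colours at the mutated vertex are either all nonzero or all zero. In the all-zero case the arrows added in step (1) come in pairs of colours $m$ and $0$ that cancel. Moreover, restoring the child edges between successive mutations at $v$ requires mutating at the children, which are internal in general. ``Leaf-style shifts'' do not apply there, and the recursion down to the leaves is only gestured at.

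Your Step 2 as written has a companion gap. The induction hypothesis gives an arbitrary mutation sequence on $T'$, whose intermediate quivers need not be trees. If $u$ has mixed (some zero, some nonzero) outgoing colours inside $T'$, then no colour on $u\ell$ prevents $\mu_u$ from creating arrows at $\ell$.

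The paper closes this by strengthening the hypothesis to ``there is a sequence of \emph{stable} mutations''. Before each mutation at the neighbour $w$ of the removed leaf $v$, it inserts mutations at $v$ (which are always stable). These make $\overline{wv}\neq 0$ if the other outgoing colours at $w$ are nonzero, and $\overline{wv}=0$ if they are all zero. Then $w$ is stable in the big quiver, its mutation restricts to the one on $T'$, and final mutations at $v$ fix the last edge.

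Your root-outward scheme could also be made to work with the all-or-none criterion plus a recursive lemma: shift one edge by one using only stable mutations in the subtree below it, by induction on height. As stated, however, it does not work.
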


\begin{proof}
Let $v$ be a vertex of $T$, and let $w_1, \dots, w_k$ be its neighbours in $T$. Let $Q$ be a quiver whose underlying graph is $ T$, and denote by $c_i$ the color of the arrow $v \to w_i$ in $Q $. We say that $v$ is a \emph{stable vertex} of $Q$ if $$c_1 c_2 \cdots c_k = 0 \quad \text{implies} \quad c_1 = c_2 = \cdots = c_k = 0.$$
Observe that if $v$ is a stable vertex of $Q$, then $\mu_v(Q)$ is again a quiver whose underlying graph is $T$. A mutation $\mu_v : Q \to Q'$ is called \emph{stable} if the vertex $v$ is a stable vertex of $Q$.

We prove by induction on $n = |T_0|$ that if $Q$ and $Q'$ are two coloured quivers whose underlying graphs are isomorphic trees, then there exists a sequence of stable mutations from $Q$ to $Q'$. The result is immediate for $n=1$ and $n=2$.

Assume now that $n>2$, and let $v$ be a leaf of $T$. Let $w$ be the unique neighbor of $v$ in $T$, and let $T'$ be the tree obtained from $T$ by removing the vertex $v$. Denote by $ \widetilde{Q}$ the coloured quiver obtained from $Q$ by deleting the vertex $v$. Since the coloured quivers $\widetilde{Q}$ and $ \widetilde{Q}'$ have the same underlying tree $T'$, by inductive hypothesis there exists a sequence of stable mutations $$\mu_{v_i} : \widetilde{Q}_i \to \widetilde{Q}_{i+1}, \quad 0 \le i < k,$$
such that $\widetilde{Q}_0=\widetilde{Q}$ and $\widetilde{Q}_k = \widetilde{Q}'$. The idea is to use this sequence of mutations to pass from $ Q $ to $ Q' $, but before each occurrence of a mutation at $ w $, we insert suitable mutations at $ v $ so that the resulting sequence remains stable.

Let $ w_0, w_1, \dots, w_r $ be the neighbours of $ w $ in $ T $, where $ w_0 = v $. If $ Q $ is a coloured quiver whose underlying graph is $ T $, we denote by $ c_i(Q) $ the color of the arrow $ w \to w_i $ in $ Q $. Let $ 1 \le i_1 < i_2 < \cdots < i_t $ be the indices such that $ v_{i_j} = w $. Set $ Q_0 = Q $.

Note that the sequence of mutations
$$\xymatrix{
Q_0 = Q \ar[r]^{\mu_{v_0}} & Q_1 \ar[r]^{\mu_{v_1}} & \cdots \ar[r]^{\mu_{v_{i_1-2}}} & Q_{i_1-1} \ar[r]^{\mu_{v_{i_1-1}}} & Q_{i_1}'
}$$
is stable. If $ v_{i_1} = w $ is a stable vertex of $ Q_{i_1}' $, then we set $ Q_{i_1} := Q_{i_1}' $. Otherwise, two cases may occur:

\begin{enumerate}
\item $ c_0(Q_{i_1}') = 0 $, but $ c_i(Q_{i_1}') \ne 0 $ for all $ i = 1, \dots, r $. In this case, we apply the mutation $ \mu_v $ to $ Q_{i_1}' $, and define $ Q_{i_1} := \mu_v(Q_{i_1}') $. It is immediate that $ w $ is a stable vertex of $ Q_{i_1} $.

\item $ c_0(Q_{i_1}') \ne 0 $, but $ c_i(Q_{i_1}') = 0 $ for all $ i = 1, \dots, r $. In this case, we repeatedly apply the mutation $ \mu_v $ starting from $ Q_{i_1}' $ until we obtain a quiver $ Q_{i_1} $ such that $ c_0(Q_{i_1}) = 0 $. Again, $ w $ is a stable vertex of $ Q_{i_1} $.
\end{enumerate}

The sequence of mutations
$$\xymatrix{
Q_{i_1} \ar[r]^{\mu_{v_{i_1}}} & Q_{i_1+1} \ar[r]^{\mu_{v_{i_1+1}}} & \cdots \ar[r]^{\mu_{v_{i_2-2}}} & Q_{i_2-1} \ar[r]^{\mu_{v_{i_2-1}}} & Q_{i_2}'
}$$
is again stable. If $ v_{i_2} = w $ is a stable vertex of $ Q_{i_2}' $, we set $ Q_{i_2} := Q_{i_2}' $. Otherwise, as before, we apply a suitable sequence of mutations at $ v $ to obtain a coloured quiver $ Q_{i_2} $ for which $ w $ is a stable vertex. Repeating this procedure, we can construct a sequence of stable mutations from $ Q $ to $ Q' $.

\end{proof}

If $Q$ is a simple $m$-coloured quiver with  colouration function $\kappa:Q_1 \rightarrow \{0,1,\ldots, m\}$ and $\mathcal{P}$ denote the set of all non-trivial paths in $Q$ the colouration $\kappa$ induces a  \textit{weight function} $w:\mathcal{P}\rightarrow \mathbb{N}$ over $\mathcal{P}$ in the following way: if $p=x_1\cdots x_k x_{k+1}$ is a non-trivial path in $Q$, then $$w(p):= \sum_{i=1}^{k}\kappa(x_{i}x_{i+1}).$$

We also denote the weight of the path $x_1\cdots x_k x_{k+1}$ by $\overline{x_1\cdots x_k x_{k+1}}$.\\

If $\mathcal{O}=(x_1 \ldots x_k)$ is a  $k$-cycle in $Q$, then $w(x_1 x_2\cdots x_k x_{1})=w(x_2 x_3\cdots x_k x_{1}x_2)=\cdots =w(x_k x_1\cdots x_{k-1} x_{k})$. Therefore we can define the colouration of a cycle as follows:\\

\begin{defi}
Let  $\mathcal{O}=(x_1 \ldots x_k)$ a  $k$-cycle in $Q$. The colouration of $\mathcal{O}$ is defined as the minimum possible weight of a $k$-cycle contained in $\mathcal{O}$ ; i.e.
$$  \kappa(\mathcal{O}):= \min\{ w(x_1 x_2\cdots x_k x_{1}), w(x_1x_k\cdots x_2 x_{1}) \}  $$
\end{defi}

\subsection{Mutation class of  $\mathbb{A}_n$-quivers}

Remember that  a clique is a directed quiver in which every pair of distinct vertices is connected by an unique pair of edges (one in each direction). We denote by $\mathcal{C}_r$ a clique with $r$ vertices. \\

Let $\mathcal{Q}_{n,m}^A$ be the $m$-coloured mutation class of $\mathbb{A}_n$ described in \cite{GQP}. This is the class of $m$-coloured simple and connected quivers $Q$ with $n$ vertices and no holes which satisfy the following two additional  conditions:

\begin{enumerate}
\item For each vertex $v$ in $Q_0$ with $z\geq 1$ neighbours, there exists two cliques $\mathcal{C}_r$  and $\mathcal{D}_k$ such that $v \in \mathcal{C}_r \cap \mathcal{D}_k$,  $r+k=z+2$ and $r,k \leq m+2$. In addition, there are not arrows between two vertices $i\in \mathcal{C}_r$ and  $j\in \mathcal{D}_k$.

\begin{figure}[H]
\begin{center}

\[
\xy/r4pc/:{\xypolygon10"A"{~<{}~>{}{}}}
*+{{\scriptstyle \bullet}},

\POS"A6" \drop{\begin{array}{llllll} &&&&& v \end{array}}
\POS"A10" \ar@{-}   "A1"
\POS"A9" \ar@{.}   "A10"
\POS"A8" \ar@{.}   "A9"
\POS"A2" \ar@{-}   "A3"
\POS"A7" \ar@{-}   "A8"
\POS"A7" \ar@{-}   "A1"
\POS"A4" \ar@{-}   "A5"
\POS"A3" \ar@{.}   "A4"
\POS"A7" \ar@{-}   "A0"
\POS"A4" \ar@{-}   "A0"
\POS"A2" \ar@{-}   "A0"
\POS"A1" \ar@{-}   "A0"
\POS"A8" \ar@{-}   "A0"
\POS"A10" \ar@{-}   "A0"
\POS"A5" \ar@{-}   "A2"
\POS"A0" \ar@{-}   "A5"
\POS"A0" \ar@{-}   "A3"
\POS"A8" \ar@{-}   "A10"
\POS"A8" \ar@{-}   "A1"
\POS"A7" \ar@{-}   "A10"
\POS"A2" \ar@{-}   "A4"
\POS"A3" \ar@{-}   "A5"
\POS"A2" \drop{\begin{array}{llllll} &&&&& \mathcal{C}_r  \end{array}}
\POS"A10" \drop{\begin{array}{llllll} &&&&& \mathcal{D}_k  \end{array}}

\endxy   \]

\end{center}
\end{figure}

\item For each triangle (i.e, $3$-cycle) $(v_1v_2v_3)$ with $v_1,v_2,v_3$ three different vertices belonging to the same clique $\mathcal{C}$ we have that  the colouration $\kappa((v_1v_2v_3))=m-1$.\\

\end{enumerate}

The following figure shows an example of a $2$-coloured quiver in the class $\mathcal{Q}^A_{13,2}$.

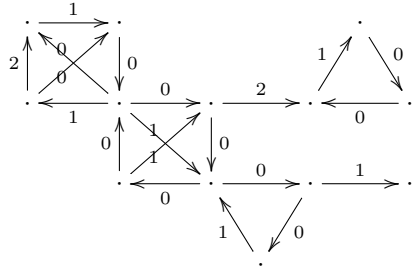
\begin{figure}[H]
\begin{center}
\[
\xymatrix@R=20pt@C=10pt{ \cdot \ar[rr]^{1} && \cdot \ar[d]^0 && &&  & \cdot \ar[dr]^0 & \\
\cdot \ar[u]^2 \ar[rru]^{0} && \cdot \ar[ll]^1 \ar[rr]^0 \ar[llu]^{ 0 } \ar[rrd]_1  && \cdot \ar[rr]^2 \ar[d]^0 && \cdot \ar[ru]^1 && \cdot \ar[ll]^0 \\
&& \cdot \ar[u]^0 \ar[rru]^1  && \cdot  \ar[ll]^0 \ar[rr]^0 && \cdot \ar[rr]^1 \ar[dl]^0 && \cdot \\
&& && & \cdot \ar[lu]^1 & &&
}
\]

\caption{A $2$-coloured quiver in the class $\mathcal{Q}^A_{13,2}$ .}
\label{Q132}
\end{center}
\end{figure}

We denote by $\mathcal{Q}_{m}^A$ the union of all $\mathcal{Q}_{n,m}^A$ for all $n$. Observe that any connected subquiver of a quiver $Q$ in $\mathcal{Q}_{m}^A$ is also in $\mathcal{Q}_{m}^A$.\\

Here we list some important properties of this class.\\

\newpage

\begin{prop}\cite[Lemmas 3.4, 3.7 and 3.8]{GQP}\label{Propiedades de la clase del An} Let $\mathcal{Q}_{n,m}^A$ be the $m$-coloured mutation class of $\mathbb{A}_n$. Then:
\begin{itemize}
  \item [(a)] $\mathcal{Q}_{n,m}^A$ is closed under  coloured quiver  mutation.
  \item [(b)] If $Q\in \mathcal{Q}_{n,m}^A$ and $v$ is any vertex of $Q$ we have $\mu_v^{m+1}(Q)=Q$.
  \item [(c)] For any  triangle $(v_1 v_2 v_3)$ belonging to the class $\mathcal{Q}_{n,m}^A$. We have that $\overline{v_1v_2}\neq \overline{v_1v_3}$. \\
\end{itemize}
\end{prop}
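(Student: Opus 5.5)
The plan is to recover the three items directly from the combinatorial description of $\mathcal{Q}_{n,m}^A$, using only the alternative algorithm for coloured mutation and the two defining conditions: the two-clique decomposition at each vertex and the colouration $m-1$ of every triangle inside a clique. We also use that quivers in the class are hole-free. Since everything is local, we fix a vertex $v$ and work inside $\mathcal{C}_r\cup\mathcal{D}_k$, the union of the two cliques through $v$. The mutation $\mu_v$ only changes arrows incident with $v$ and arrows between neighbours of $v$. We will prove (c) first, because it controls which arrows leaving $v$ can have colour $0$, and (a) and (b) both depend on that.

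For (c), take a triangle $(v_1v_2v_3)$ in a clique $\mathcal{C}$. If $\overline{v_1v_2}=\overline{v_1v_3}=c$, we examine the two possible cyclic weights of the triangle. One orientation has weight $c+\overline{v_2v_3}+(m-c)=m+\overline{v_2v_3}$, and the other has weight $c+(m-\overline{v_2v_3})+(m-c)=2m-\overline{v_2v_3}$. Their minimum equals $m-1$ only if $\overline{v_2v_3}=-1$ or $\overline{v_2v_3}=m+1$, both impossible. This contradicts condition (2), so (c) holds.

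Consequently, inside each clique through $v$ the colours of the arrows leaving $v$ are pairwise distinct. In particular there is at most one $0$-coloured arrow $v\to w$ in $\mathcal{C}_r$ and at most one in $\mathcal{D}_k$.

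For (a), we run the algorithm at $v$ and split into cases according to how many of the (at most two) $0$-arrows leaving $v$ exist, and in which cliques they lie. Step (1) adds arrows $i\to w$ of colour $\overline{iv}$ for each $0$-arrow $v\to w$. If $w\in\mathcal{C}_r$ and $i\in\mathcal{C}_r$, an arrow $i\to w$ already exists. The triangle condition $\kappa((ivw))=m-1$ forces its colour to be exactly what step (2) needs for cancellation or merging, so the clique stays a clique. If $i\in\mathcal{D}_k$ and $w\in\mathcal{C}_r$, new arrows appear. These make $w$ join $\mathcal{D}_k$ while $v$ leaves it, or the analogous re-partition.

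We must then check four things for the new quiver:
\begin{enumerate}
\item each vertex again has two cliques of sizes at most $m+2$ meeting only in that vertex, with no arrows between them;
\item every new triangle has colouration $m-1$; this is a weight computation using that step (3) shifts the colours at $v$ by $\pm1$;
\item no hole is created;
\item connectivity is preserved.
\end{enumerate}
The size bound $m+2$ is where distinctness of the colours leaving $v$ is used. A clique of size $m+2$ through $v$ uses all $m+1$ colours at $v$, so it always contains a $0$-arrow, and the mutation splits or shrinks it rather than enlarging it.

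For (b), we note that $\mu_v$ acts on the colours at $v$ by adding $1$ to incoming and subtracting $1$ to outgoing colours modulo $m+1$. Because colours within each clique are distinct, after $m+1$ steps every outgoing colour has passed through $0$ exactly once. Tracking the clique decomposition through these steps shows that the neighbour $w$ with $\overline{vw}=0$ moves from one clique to the other and later back. The total effect on the arrows among neighbours cancels, yielding $\mu_v^{m+1}(Q)=Q$.

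I expect the main obstacle to be the case analysis in (a): checking that triangles created between the two cliques have colouration $m-1$ and that no holes appear. I would handle it by reducing to the local configuration $\mathcal{C}_r\cup\mathcal{D}_k$, since the hole-free structure makes the quiver tree-like in its cliques. I would then verify the finitely many patterns, parametrised by the position of the $0$-arrows, by explicit colour bookkeeping.
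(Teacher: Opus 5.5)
\textbf{Verdict: genuine gap.} Your proof of (c) is fine. Add one line: every triangle lies in one of the two cliques at $v_1$, since there are no arrows between those cliques, so condition (2) applies to it. The paper gives no proof of this proposition and simply quotes \cite{GQP}.

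\textbf{The gap in (a).} It sits in the one step you describe explicitly. Let $\overline{vw}=0$ with $w\in\mathcal{C}_r$, take $i\in\mathcal{C}_r\setminus\{v,w\}$, and set $c=\overline{iv}$; then $c\neq m$ by (c). The orientation $i\to w\to v\to i$ has weight $\overline{iw}+2m-c\geq m$. So $\kappa((ivw))=m-1$ forces $c+\overline{wi}=m-1$, i.e.\ $\overline{iw}=c+1$. Step (1) adds an arrow $i\to w$ of colour $c$, and step (2) then cancels \emph{both} arrows, so the edge $iw$ disappears.
\begin{itemize}
\item $\mathcal{C}_r$ therefore does not stay a clique: $w$ is expelled from it.
\item $v$ can never leave $\mathcal{D}_k$, because the algorithm only recolours arrows at $v$ and never deletes them.
\item The correct picture is that the cliques at $v$ become $\mathcal{C}_r\setminus\{w\}$ and $\mathcal{D}_k\cup\{w\}$.
\item If there is a second $0$-arrow $v\to w'$ with $w'\in\mathcal{D}_k$, the two arrows added from $w$ to $w'$ have colours $0$ and $m$ and cancel. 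So $w$ and $w'$ simply swap cliques.
\end{itemize}
In your model, $w$ would be adjacent to all of $(\mathcal{C}_r\cup\mathcal{D}_k)\setminus\{w\}$ while $v$ keeps all its neighbours. That violates condition (1) at $v$ as soon as $r\geq 3$, so the case analysis you plan would be checking the wrong quiver.

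With the right picture the checks are short:
\begin{itemize}
\item The receiving clique has no $0$-arrow from $v$, so it has at most $m+1$ vertices by (c). This is your size argument, correctly placed.
\item For $d\in\mathcal{D}_k$ with $\overline{vd}\neq0$, the new triangle $(wvd)$, read as $w\to v\to d\to w$, has new colours $0$, $\overline{vd}-1$, $\overline{dv}$. Its weight is $m-1$.
\item The triangle $(wdd')$ has the same two weights as $(vdd')$.
\item At $w$, the new cliques are $\mathcal{D}_k\cup\{w\}$ and the old clique $\mathcal{E}$ of $w$ not containing $v$. There are no arrows between them: an arrow $d\,e$ with $e\in\mathcal{E}$ would give a hole $(v\,d\,e\,w)$ in $Q$.
\end{itemize}

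\textbf{The gap in (b).} The description ``$w$ moves to the other clique and later back'' is not what happens either. Each neighbour $u$ of $v$ is the $0$-neighbour at exactly one of the $m+1$ steps, namely step $\overline{vu}+1$, so it moves exactly once. After $m+1$ steps the original partition reappears with the two cliques interchanged. Your claim that the effect on arrows between neighbours ``cancels'' is the whole content of (b), and it needs a colour check.
\begin{itemize}
\item For $u,u'$ in a common clique with $a=\overline{vu}<a'=\overline{vu'}$, the triangle condition gives $\overline{uu'}=a'-a-1$. The arrow is deleted at step $a+1$. It is recreated at step $a'+1$ with colour equal to the current $\overline{uv}$, which is $a'-a-1$.
\item A pair from different cliques acquires an arrow when the first of them moves and loses it when the second moves. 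If they move simultaneously, nothing happens.
\end{itemize}
A shorter route is the one the paper uses for the $\mathbb{D}$-analogue (Lemma \ref{mutacion invertible}, via \cite[Lemma 4.8]{GQP}). After $\mu_v$, the arrow between two neighbours $u_1,u_2$ depends only on $Q[v,u_1,u_2]$. One then verifies $\mu_v^{m+1}=\mathrm{id}$ directly on a $3$-vertex path and on a triangle of colouration $m-1$, using (a) to keep every intermediate triangle at colouration $m-1$.
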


\subsection{Mutation class of type $\mathbb{D}_n$}

In the following, we will consider coloured quivers whose underlying graph is a Dynkin graph of type $\mathbb{D}_n$ with $n\geq 4$.
We will refer to these quivers as coloured quivers of type $\mathbb{D}_n$ or $\mathbb{D}_n$-quivers for short.\\

\begin{figure}[H]
\begin{center}
\[\xymatrix@R=10pt@C=20pt{&&&& n-1 \\ 1 \ar@{-}[r] & 2 \ar@{-}[r] & \cdots \ar@{-}[r] & n-2 \ar@{-}[ru]  \ar@{-}[rd]& \\
&&&& n}
\]
\caption{The Dynkin graph   $\mathbb{D}_n$}
\label{quiverDn}
\end{center}
\end{figure}
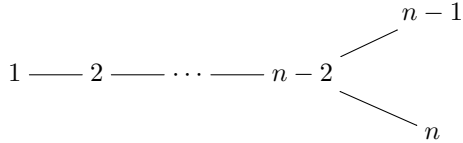

Let $Q$ be the following  coloured quiver with arrows only of colour $0$ and $m$ whose underlying graph is a Dynkin graph of type $\mathbb{D}_n$.

$$\xymatrix@R=15pt@C=20pt{&&&&& n-1 \ar@<0.6ex>^{(m)}[ld]\\
1 \ar@<0.6ex>^{(0)}[r] & 2 \ar@<0.6ex>^{(m)}[l] \ar@<0.6ex>^{(0)}[r] & 3  \ar@<0.6ex>^{(m)}[l]  \ar@<0.6ex>^{(0)}[r] & \cdots   \ar@<0.6ex>^{(m)}[l] \ar@<0.6ex>^{(0)}[r] & n-2  \ar@<0.6ex>^{(m)}[l] \ar@<0.6ex>^{(0)}[rd] \ar@<0.6ex>^{(0)}[ru] & \\
&&&&& n  \ar@<0.6ex>^{(m)}[lu] }$$

We call the mutation class of type $\mathbb{D}_n$ to the set of all quivers
mutation equivalent to $Q$.\
By a result of Torkildsen \cite{Tork}, we know that this mutation class is finite. \

\begin{ejem}  The following figure  shows all non-isomorphic coloured quivers in the mutation class of type $\mathbb{D}_4$ for $m=2$.

\begin{figure}[H]
\begin{center}
$$\begin{array}{ccccc}
  $\xymatrix@R=10pt@C=20pt{ & & 3  \\ 1 & 2 \ar[ru]^0 \ar[rd]^0 \ar[l]^0  & \\ && 4 }$ &
  $\xymatrix@R=10pt@C=20pt{&& 3 \\ 1 & 2 \ar[ru]^1 \ar[rd]^0 \ar[l]^0 \\ & &4}$ &
  $\xymatrix@R=10pt@C=20pt{&& 3 \\ 1 & 2 \ar[ru]^2 \ar[rd]^0 \ar[l]^0 \\ & &4}$ &
  $\xymatrix@R=10pt@C=20pt{&& 3 \ar[dl]^0 \\ 1 \ar[rru]^1 & 2 \ar[l]^0\ar[dr]_0 & \\ && 4 \ar[uu]_1}$ &
  $\xymatrix@R=10pt@C=20pt{& 3 \ar[ld]_0 & \\ 1 \ar[rd]_1 && 4  \ar[lu]_0\\
  & 2 \ar[ur]_0 & }$
   \\
   &&&&\\
  $\xymatrix@R=10pt@C=20pt{ && 3 \\ 1 & 2 \ar[ru]^0 \ar[rd]^1 \ar[l]^1 \\ & &4}$ &
  $\xymatrix@R=10pt@C=20pt{&& 3 \\ 1 & 2 \ar[ru]^1 \ar[rd]^1 \ar[l]^1 \\ & &4}$ &
  $\xymatrix@R=10pt@C=20pt{&& 3 \\ 1 & 2 \ar[ru]^2 \ar[rd]^1 \ar[l]^1 \\ & &4}$ &
  $\xymatrix@R=10pt@C=20pt{ && 3 \ar[dl]^0 \\ 1 \ar[rru]^0 & 2 \ar[l]^1\ar[dr]_1 & \\ && 4 \ar[uu]_0}$  & \\
  &&&& \\

  $\xymatrix@R=10pt@C=20pt{ & & 3  \\ 1 & 2 \ar[ru]^0 \ar[rd]^2 \ar[l]^2  & \\ && 4 }$ &
  $\xymatrix@R=10pt@C=20pt{& & 3  \\ 1 & 2 \ar[ru]^1 \ar[rd]^2 \ar[l]^2  & \\ && 4 }$ &
  $\xymatrix@R=10pt@C=20pt{& & 3  \\ 1 & 2 \ar[ru]^2 \ar[rd]^2 \ar[l]^2  & \\ && 4 }$ &
  $\xymatrix@R=10pt@C=20pt{&& 3 \ar[dl]^1 \\ 1 \ar[rru]^0 & 2 \ar[l]^0\ar[dr]_0 & \\ && 4 \ar[uu]_0}$ &

\end{array}$$
\caption{The mutation class of the 2-coloured quiver $\mathbb{D}_4 $}
\label{mutation class A3}
\end{center}
\end{figure}
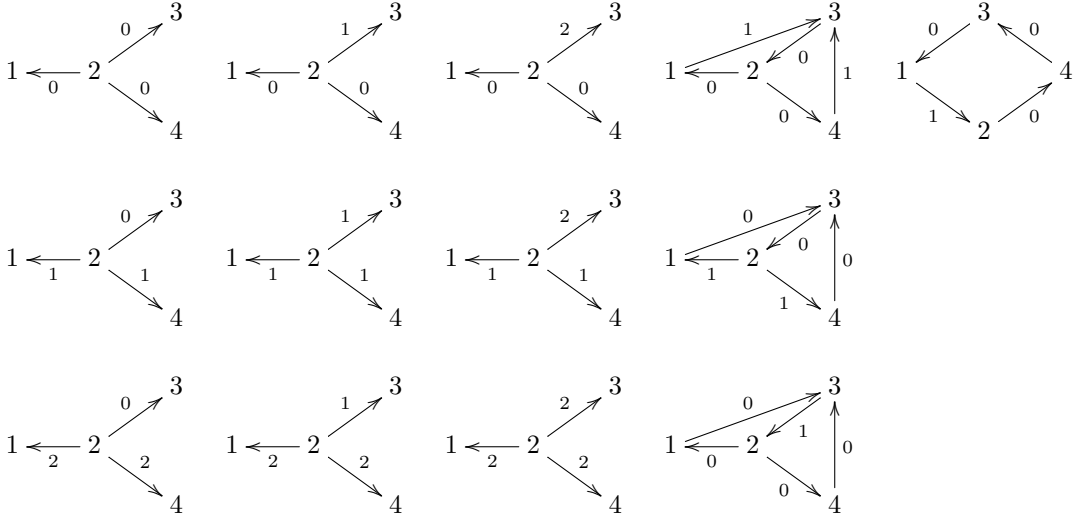

\end{ejem}

\section{The set $\mathcal{Q}_{n,m}^D$}

In this section we define a special class of $m$-coloured quivers with $n$ vertices which will turning  out the coloured mutation class of type $\mathbb{D}_n$.\\

In  the sequel,  given a subquiver $Q'$ we denote by  $\mathcal{A}(Q')$ the set of all arrows belonging to $Q'$ and we adopt the following convention concerning decorations on the names of vertices: $\circledast$  means that the vertex $\ast$ belongs also to a quiver  in the class $\mathcal{Q}_{m}^A$.\\

\begin{defi}\label{la clase}
We define $\mathcal{Q}_{n,m}^D$ to be the set of all quivers $m$-coloured quivers $Q$ with $n$ vertices belonging to one of the following two types:\\

\begin{description}
  \item[Type I] $Q$ has exacly two vertices $a$ and $b$ both of them belonging to at most two $[a,b]$-quasi-complete quivers ${}^aQ^b[x_1,\cdots,x_k]$ and ${}^aQ^b[y_1,\cdots,y_r]$  in such a way that: \\

      \begin{enumerate}
        \item $1 \leq r+k \leq m+1$; \\
        \item $\displaystyle  Q\setminus \{a,b,\mathcal{A}({}^aQ^b[x_1,\cdots,x_k]\cup {}^aQ^b[y_1,\cdots,y_r])\} $ is a disjoint union of connected quivers $Q^1\sqcup \cdots \sqcup Q^{r+k}$ all of them belonging to the class $\mathcal{Q}_{m}^A$; \\
        \item every triangle in $Q$ has colouration $m-1$. \\
        \item if $r,k \geq 1$, every cycle $(y_jax_ib)$ has colouration $\kappa((y_jax_ib))=m-1$.\\
      \end{enumerate}

  \item[Type II] $Q$ has a full subquiver $\mathcal{O}$, called the  \textit{central cycle}, which is an induced $k$-cycle $(x_1\cdots x_k)$ with $k\geq 3$  in such a  way that:\\

       \begin{enumerate}
         \item  The colouration $\kappa(\mathcal{O})=m-1$;\\

         \item  For every arrow $\alpha_i:x_{i}\rightarrow x_{i+1}$ there may exits a complete quiver $Q_{\alpha_i}=Q[x_i,x_{i+1},z_1^{\alpha_i},\cdots,z_{r_{\alpha_i}}^{\alpha_i}]$ of size at most $m+2$ (i.e., $1 \leq r_{\alpha_i}\leq m$), where the indices $i$ are taken modulo $k$;\\

         \item $\displaystyle Q\setminus \{\mathcal{O},\mathcal{A}(\cup_{i=1}^k Q_{\alpha_i})\}$ is a disjoint union of connected quivers $Q^1\sqcup \cdots \sqcup Q^{\alpha}$ all of them belonging to the class $\mathcal{Q}_{m}^A$, where $\alpha=\sum_{i=1}^kr_{\alpha_i}$. \\

         \item Every triangle in $Q$ has colouration $m-1$. \\

         \item If $w(x_1\dots x_k)=m-1$, then every triangle $(x_ix_{i+1}z_j^{\alpha_i})$ has weight $m-1$. \\

         \item If $k=3$ and there exists $1 \leq i \leq 3$ such that $\kappa(\alpha_i)=0$, then $\sum_{j\neq i}r_{\alpha_j}\leq m+1$. \\

         \end{enumerate}

In the sequel we adopt the following convention concerning decorations on the
names of vertices: $\circledast$  means that the vertex $\ast$ belongs also to a quiver $Q^{i}$ in the class $\mathcal{Q}_{m}^A$.

\begin{figure}[H]
\begin{center}
$$\begin{array}{ccccc}
$\SelectTips{eu}{10}\xymatrix@R=14pt@C=14pt{ &&& \\ & *+[o][F]{y_1} \ar@{-}[dr] \ar@{-}[dl] \ar@{-}[ddr]  \ar@{-}[ddd]& &  *+[o][F]{x_1} \ar@{-}[ddl] \ar@{-}[rd] \ar@{-}[ld] \ar@{-}[ddd]&    \\
  *+[o][F]{y_2} \ar@{..}[d] \ar@{-}[rr] \ar@{-}[ddr]\ar@{-}[rrd]& & a  &  &  *+[o][F]{x_2} \ar@{..}[d] \ar@{-}[ll] \ar@{-}[lld] \ar@{-}[ldd] \\
  &  & b &  &    \\
  & *+[o][F]{y_r} \ar@{-}[ruu]\ar@{..}[lu]\ar@{-}[ru] & &  *+[o][F]{x_k} \ar@{-}[luu] \ar@{-}[lu] \ar@{..}[ru]&  \\ &&&  }$
&&&&
$\SelectTips{eu}{10}\xymatrix@R=14pt@C=14pt{ &&&\\  &&  *+[o][F]{x_1} \ar@{-}[ddl] \ar@{-}[rd] \ar@{-}[ld] \ar@{-}[ddd]&    \\
  & a  &  &  *+[o][F]{x_2} \ar@{..}[d] \ar@{-}[ll] \ar@{-}[lld] \ar@{-}[ldd] \\
   & b &  &    \\
  & &  *+[o][F]{x_k} \ar@{-}[luu] \ar@{-}[lu] \ar@{..}[ru]&  \\ &&&  }$
\\
&&&&\\
r,k\geq 1 &&&&  r=0, \  k\geq 1 \\
&&&&\\
\end{array}$$

\caption{Examples of quivers of type I}
\label{tipe I}
\end{center}
\end{figure}

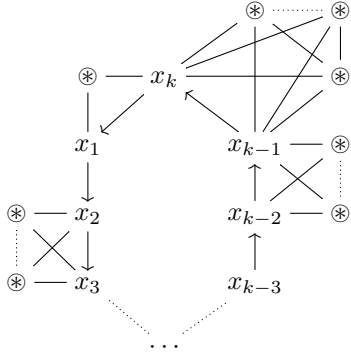
\begin{figure}[H]
\begin{center}
 $\SelectTips{eu}{10}\xymatrix@R=12pt@C=12pt{  &&&  \circledast \ar@{..}[r] \ar@{-}[dd] \ar@{-}[dr] & \circledast \ar@{-}[d] & \\
 & \circledast \ar@{-}[r] \ar@{-}[d] & x_k \ar[dl] \ar@{-}[rru] \ar@{-}[rr] \ar@{-}[ur]& & \circledast \ar@{-}[dl] & \\
& x_{1} \ar[d]  && x_{k-1}  \ar@{-}[ruu] \ar@{-}[r] \ar[ul] \ar@{-}[rd]  & \circledast \ar@{..}[d]& \\
\circledast \ar@{-}[r] \ar@{-}[rd] \ar@{..}[d]  & x_{2} \ar[d] && x_{k-2} \ar[u] \ar@{-}[ru]\ar@{-}[r] & \circledast& \\
\circledast \ar@{-}[r] \ar@{-}[ru] & x_{3}  \ar@{..}[dr] && x_{k-3}  \ar[u] & & \\
 & & \cdots \ar@{..}[ur]& & & \\
 &&&&&}$   \\

\caption{Example of a quiver of type  II}
\label{tipo II}
\end{center}
\end{figure}

\end{description}

\end{defi}
\vspace*{.5cm}

Observe that the subquivers $Q^i$ can be in the set $\mathcal{Q}_{1,m}^A$, i.e., they can have only one vertex.\\

\begin{obs}\label{no puedo tener 2 flechas de color cero hacia a y b}
For type  I,  observe that neither the triangle $(ax_ib)$ nor $(ay_jb)$  appear  because we do not have an edge $\xymatrix{ a \ar@{-}[r]& b}$ in $Q$. As a consequence, the arrows $x_i\rightarrow a$ and $x_i\rightarrow b$ can have the same colour. The same is true for the arrows $y_i\rightarrow a$ and $y_i\rightarrow b$.  However, if $r,k \geq 1$, every cycle $(y_jax_ib)$ has colouration $\kappa((y_jax_ib))=m-1$; and consequently $\overline{x_ia} \neq \overline{x_ib}$.
\end{obs}

\begin{obs}
In the specific case where $m=1$, we recover type I of \cite{Va} when  $r=0$ and $k=1$,  type II when $r=0$ and $k=2$ and type III when $r=k=1$.  Finally, our type II  corresponds to the type IV of \cite{Va}.
\end{obs}

\begin{lema}\label{unico arbol}
The only trees in the class $\mathcal{Q}_{n,m}^D$  are the coloured quivers of type $\mathbb{D}_{n}$.
\end{lema}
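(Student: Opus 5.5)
The plan is to prove both inclusions directly from Definition \ref{la clase}. First I show that every coloured quiver whose underlying graph is $\mathbb{D}_n$ lies in $\mathcal{Q}_{n,m}^D$, as a quiver of type I. Then I show that a tree in $\mathcal{Q}_{n,m}^D$ can only be a degenerate type I quiver whose underlying graph is forced to be $\mathbb{D}_n$.

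\emph{Step 1: $\mathbb{D}_n$-quivers are of type I.} Label the graph as in Figure \ref{quiverDn} and take $a=n-1$, $b=n$ and $x_1=n-2$, with $k=1$ and $r=0$. Since $a$ and $b$ are not adjacent, ${}^aQ^b[x_1]$ is the $[a,b]$-quasi-complete quiver on $\{a,x_1,b\}$, and $r+k=1\le m+1$. Removing $a$, $b$ and the arrows of ${}^aQ^b[x_1]$ leaves the path $1-2-\cdots-(n-2)$ with an arbitrary colouring. This is a tree in which every vertex has at most two neighbours, so it is hole-free. At each vertex the two required cliques can be taken to be edges or single vertices, both of size at most $2\le m+2$, and there are no triangles. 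Hence this path lies in $\mathcal{Q}_{n-2,m}^A$. Conditions (3) and (4) hold vacuously, since $Q$ has no triangles and $r=0$. The vertex $x_1=n-2$ is an endpoint of the path, so it meets $Q^1$ in a single clique (an edge), as the $\circledast$ decoration requires. For $n=4$ the three leaves are interchangeable, and any two of them can serve as $a,b$.

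\emph{Step 2: a tree in the class is a $\mathbb{D}_n$-quiver.} Let $Q\in\mathcal{Q}_{n,m}^D$ be a tree. A quiver of type II contains its central cycle $\mathcal{O}$, so $Q$ is of type I. If $r,k\ge1$ then $(y_1ax_1b)$ is a $4$-cycle, which is impossible; so by symmetry $r=0$ and $k\ge1$. If $k\ge2$ then $x_1,x_2,a$ are pairwise adjacent in the quasi-complete quiver, giving a triangle; so $k=1$. Thus the only arrows at $a$ and $b$ join them to $x_1$, and $a,b$ are leaves hanging from $x_1$. By condition (2), the rest of $Q$ is a single connected quiver $Q^1\in\mathcal{Q}_m^A$ containing $x_1$, and it is again a tree. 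In a tree of $\mathcal{Q}_m^A$ the cliques $\mathcal{C}_r,\mathcal{D}_k$ at each vertex have size at most $2$. The relation $r+k=z+2$ then forces valency $z\le2$, so $Q^1$ is a path.

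\emph{Main obstacle.} It remains to show that $x_1$ is an endpoint of this path; otherwise $x_1$ has valency $4$ and the graph is not $\mathbb{D}_n$. This does not follow from conditions (1)--(4) read literally. It has to come from the convention that $x_1$ is a $\circledast$-vertex, i.e.\ that the neighbourhood of $x_1$ splits into two cliques as in condition (1) of $\mathcal{Q}^A$, with the quasi-complete quiver ${}^aQ^b[x_1]$ playing the role of one clique and a single clique of $Q^1$ the other. Since $Q^1$ is triangle-free, that clique of $Q^1$ is at most one edge, so $x_1$ has at most one neighbour in $Q^1$ and is an endpoint. Making this reading of the definition precise is the delicate point of the proof; once it is granted, the underlying graph is a path with two leaves $a,b$ attached at one end, which is exactly $\mathbb{D}_n$.
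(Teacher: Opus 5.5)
Your proof is correct and follows the paper's argument. You exclude type II by its central cycle, exclude $r,k\ge 1$ by the $4$-cycle $(y_1ax_1b)$ and $k\ge 2$ by the triangle $(x_1ax_2)$, and then use that a tree in $\mathcal{Q}_m^A$ is an $\mathbb{A}$-quiver. The paper's closing phrase ``only an $\mathbb{A}$-quiver can be attached at vertex $x_1$'' silently relies on the reading you make explicit: $x_1$ meets $Q^1$ in a single clique and is therefore an endpoint. Your Step 1 also supplies the converse inclusion, which the paper treats as obvious.
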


\begin{proof}

Let $Q$ be a tree in the class $\mathcal{Q}_{n,m}^D$. Since $Q$ is acyclic, it cannot be of type II. If $Q$ is of type I and $r, k > 1$, then $Q$ would contain the 4-cycle $(x_1ay_1b)$ (if $r, k \neq 0$) or the 3-cycle $(x_1ax_2)$ (if $r=0$ and $k>1$). Therefore, $r=0$ and $k=1$. We conclude by observing that the only tree in the class $\mathcal{Q}_{m}^A$ is an $\mathbb{A}$-quiver itself, due to the absence of triangles. Consequently, only an $\mathbb{A}$-quiver can be attached at vertex $x_1$.
\end{proof}

\section{Properties of the class $\mathcal{Q}_{n,m}^D$}

Recall that given a connected quiver $Q$, its Euler characteristic, $\chi(Q) = |Q_1| -|Q_0| + 1$, is the rank of its first homology group, $Q$ being viewed as a graph. Hereafter, the notation $\chi(Q)$, where $Q$ is an $m$-coloured quiver, will denote the Euler characteristic of the associated underlying graph of $Q$ (as defined in Section \ref{grafo subyacente}).\\

It is clear that for a connected and planar coloured quiver $Q$ its Euler characteristic is zero if and only if $Q$ is a tree. Therefore the quivers in the class
$\mathcal{Q}_{n,m}^D$ with  Euler characteristic exactly zero are the coloured quivers of type $\mathbb{D}_{n}$.\\

Recall that an unicycle quiver is defined as a quiver whose underlying graph contains exactly one cycle. A connected and planar quiver $Q$ is an unicycle quiver if and only if its Euler characteristic, $\chi(Q)$, is equal to 1. Consequently the quivers in the class $\mathcal{Q}_{n,m}^D$ with  Euler characteristic exactly one are the $n$-cycles or the quiver  of type I:

$$\SelectTips{eu}{10}\xymatrix@R=10pt@C=10pt{  &&& a    \ar@{-}[dl]  \ar@{-}[dr]  &&&&\\    \ast \ar@{-}[r] & \cdots  \ar@{-}[r]  & y_1 & & x_1   \ar@{-}[r] & \cdots  \ar@{-}[r] & \ast  \\ &&& b    \ar@{-}[ul]  \ar@{-}[ur]  &&&&}$$

Moreover, it is easy to see that the Euler characteristic of an $[a,b]$-quasi-complete quiver ${}^aQ^b[x_1,\cdots,x_k]$ and the Euler characteristic of a complete quiver $Q[x_1,\cdots,x_k]$ of size $k$ are

 $$ \chi({}^aQ^b[x_1,\cdots,x_k])=\frac{(k-1)(k +2) }{2} \  \  \text{and} \ \  \chi(Q[x_1,\cdots,x_k])=\frac{(k- 1) (k - 2)}{2}  \  \  \text{respectively}.$$

Following the notation of Definition \ref{la clase} we can compute the Euler characteristic of a quiver of type I as

 \begin{eqnarray*}
\chi(Q) & = & 1+ \chi({}^aQ^b[x_1,\cdots,x_k])+\chi({}^aQ^b[y_1,\cdots,y_r])+ \sum_{i=1}^{r+k}\chi(Q^i)\\
  & = &  1+ \frac{(k-1)(k +2) }{2}+ \frac{(r-1)(r+2) }{2}+\sum_{i=1}^{r+k}\chi(Q^i)
\end{eqnarray*}

    Analogously the Euler characteristic of a quiver of type II is:

    \begin{eqnarray*}
    \chi(Q) & = & \chi(\mathcal{O})+\sum_{i=1}^k\chi(Q_{\alpha_i})+ \sum_{i=1}^{t}\chi(Q^i)\\
     & = & 1+ \sum_{i=1}^k \frac{(r_{\alpha_i} - 1) (r_{\alpha_i} - 2)}{2}+\sum_{i=1}^{t}\chi(Q^i)
    \end{eqnarray*}

   \vspace*{.5cm}

 We will show in the following section that we can mutate a quiver  in $\mathcal{Q}_{n,m}^D$ reducing its  Euler characteristic. \\


Remember that the Minkowski sum of $A,B \subseteq \mathbb{Z}$ is $A+B := \{a+b: a\in A, b\in B\}$. In particular, the $n$-fold Minkowski sum of a set $A\subseteq \mathbb{Z}$ with itself is denoted by $nA=A+A+\cdots+A$ ($n$ terms). If $A\subseteq \mathbb{Z}$ and $h\in \mathbb{Z}$, the translate of $A$ by $h$ is $A+h:= \{a+h: a\in A\}$.\\

The following proposition follows directly from the definition of the class $\mathcal{Q}_{n,m}^D$. \\

\begin{prop}\label{colores de los 3 ciclos}
Let $Q$ be the quiver $ {}^aQ^b[x_1,\cdots,x_k] \cup {}^aQ^b[y_1,\cdots,y_r]$ with $r+k\geq 2$. Then:
\begin{itemize}
  \item [(a)]If $\mathcal{C}=(x_1x_2x_3)$ is a $3$-cycle in $Q$ then $\kappa(\mathcal{C})=m-1$ and $\overline{x_1x_2}\neq \overline{x_1x_3}$.
  \item [(b)]If $\mathcal{C}=(x_iay_jb)$, then $\kappa(\mathcal{C})=m-1$ and $\overline{ax_i}\neq \overline{ay_j}$.
\end{itemize}
\end{prop}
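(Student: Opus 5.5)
The plan is to read both parts directly off the defining conditions of Type I in Definition \ref{la clase}, together with the arithmetic of colours along cycles. I will work inside a quiver of Type I, so that ${}^aQ^b[x_1,\dots,x_k]\cup{}^aQ^b[y_1,\dots,y_r]$ is an induced piece of it. In this setting condition (3) says every triangle has colouration $m-1$, and condition (4) says every 4-cycle $(y_jax_ib)$ has colouration $m-1$.

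For (a), note first that the triangle $(x_1x_2x_3)$ cannot involve the pair $\{a,b\}$, since $a$ and $b$ are not adjacent. So the equality $\kappa(\mathcal{C})=m-1$ is exactly condition (3). For the inequality, I use skew-symmetry. The weight of the cycle $x_1x_2x_3x_1$ is $\overline{x_1x_2}+\overline{x_2x_3}+\overline{x_3x_1}$, and the reverse orientation has weight $3m$ minus this. Since the colouration is the minimum of the two, one of the two orientations has weight exactly $m-1$.

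Suppose, for contradiction, that $\overline{x_1x_2}=\overline{x_1x_3}=c$. Then $\overline{x_3x_1}=m-c$, and the weight of $x_1x_2x_3x_1$ is $c+\overline{x_2x_3}+m-c=m+\overline{x_2x_3}\ge m$. The reverse weight is $\overline{x_1x_3}+\overline{x_3x_2}+\overline{x_2x_1}=c+(m-\overline{x_2x_3})+(m-c)=2m-\overline{x_2x_3}\ge m$. Hence the minimum is at least $m>m-1$, a contradiction. The same case appears as Proposition \ref{Propiedades de la clase del An}(c) in the $\mathbb{A}$ setting, so alternatively it can simply be cited there. The only point needing care is that $\kappa$ is a minimum over the two orientations, which is why I check both.

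For (b), the equality $\kappa((x_iay_jb))=m-1$ is condition (4). The inequality is the same computation on a 4-cycle. Assume $\overline{ax_i}=\overline{ay_j}=c$, and traverse $a\to x_i\to b\to y_j\to a$. This has weight $c+\overline{x_ib}+\overline{by_j}+(m-c)=m+\overline{x_ib}+\overline{by_j}\ge m$. The reverse orientation has weight $4m$ minus this, namely $3m-\overline{x_ib}-\overline{by_j}\ge m$. In both orientations the weight is at least $m$, contradicting $\kappa=m-1$.

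I expect no real obstacle. The one subtlety is the possible degenerate cases: in (a), every 3-cycle lies among $x$'s (or $y$'s) together with at most one of $a,b$, and in (b) both $r,k\ge1$ are forced by the existence of the 4-cycle. Both situations are covered by conditions (3) and (4) as stated.
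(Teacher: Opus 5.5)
Your proof is correct and follows the paper's approach: the paper gives no separate argument and simply notes that the proposition follows directly from the definition of $\mathcal{Q}_{n,m}^D$, namely conditions (3) and (4) of Type I. Your skew-symmetry computation makes explicit the inequality the paper leaves implicit: equal colours at $x_1$ (resp.\ at $a$) force both orientation weights to be at least $m$, contradicting colouration $m-1$. Condition (4) is stated for $(y_jax_ib)$, which has the same underlying cycle as $(x_iay_jb)$, so its colouration indeed applies.
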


\vspace*{.5cm}

We need the following lemma which computes the possible colouration values of a $l$-cycle in a quiver of type I with $l\geq 4$ and with the vertices $a$ and $b$ being part of the cycle. Let $A=\{m-1,2m+1\}$ and $B=\{m-1,3m+1\}$ be the sets of the possible values of the weight of the cycles $(z_1z_2z_3)$ and $(x_iby_ja)$ respectively. \\

\begin{lema}\label{coloracion de un ciclo}
Let $Q$ be the quiver $ {}^aQ^b[x_1,\cdots,x_k] \cup {}^aQ^b[y_1,\cdots,y_r]$ with $r+k\geq 2$. Let $\mathcal{C}=(z_1\ldots z_l)$ be a $l$-cycle in $Q$, with $l\geq 4$ and $a,b\in \mathcal{C}_0$. Let $A=\{m-1,2m+1\}$ and $B=\{m-1,3m+1\}$. Then, $\overline{(z_1\cdots z_l)} \in (l-4)A + B -(l-4)m.$
\end{lema}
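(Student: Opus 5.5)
We argue by induction on $l\geq 4$, cutting the cycle along chords. In the quiver $Q={}^aQ^b[x_1,\dots,x_k]\cup {}^aQ^b[y_1,\dots,y_r]$, every pair of vertices is adjacent except $\{a,b\}$ and the pairs $\{x_i,y_j\}$. In particular $a$ and $b$ are not adjacent, so in the cycle $\mathcal{C}=(z_1\ldots z_l)$ the vertices $a$ and $b$ are separated by at least one vertex on each side.

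\textbf{Base case $l=4$.} Both neighbours of $a$ on the cycle lie in $\{x_i\}\cup\{y_j\}$, and so do both neighbours of $b$. Hence $\mathcal{C}$ is $(uavb)$ with $u,v$ among the $x$'s and $y$'s.
\begin{itemize}
\item If $u=x_i$ and $v=y_j$ (or conversely), Proposition \ref{colores de los 3 ciclos}(b) gives $\kappa(\mathcal{C})=m-1$. Since the two orientations of a $4$-cycle have weights summing to $4m$, the weight of either orientation lies in $\{m-1,3m+1\}=B$.
\item If $u,v$ lie on the same side, say $u=x_i$ and $v=x_{i'}$, then $x_ix_{i'}$ is a chord. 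It splits $\mathcal{C}$ into the triangles $(x_iax_{i'})$ and $(x_ibx_{i'})$. By Proposition \ref{colores de los 3 ciclos}(a) each has weight in $A$.
\end{itemize}

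\textbf{Weight additivity.} When a cycle is cut along a chord $uv$, the weight of $\mathcal{C}$ in a fixed orientation equals the sum of the weights of the two subcycles (each traversed compatibly) minus $\overline{uv}+\overline{vu}=m$. So in the same-side case the weight lies in $A+A-m$.

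This $A+A-m$ must be reconciled with $B$. One has $A+A-m=\{m-2,2m,4m+2\}$. The compatibility condition $\overline{x_1x_2}\neq\overline{x_1x_3}$ from Proposition \ref{colores de los 3 ciclos}(a) should exclude the extreme values, forcing the weight into $\{m-1,3m+1\}$. I am not sure how this resolves. If it does not, the intended reading must be that this configuration is excluded because the cycle has to use both an $x$- and a $y$-vertex. Alternatively, the claim is about cycles in which each of $a,b$ sits between an $x$ and a $y$. Checking which reading is meant is the first thing I would do; the same-side $4$-cycle is the main obstacle.

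\textbf{Inductive step $l\geq 5$.} Pick a vertex $z$ of $\mathcal{C}$ that is neither $a$ nor $b$ and whose two cycle-neighbours $z',z''$ are adjacent in $Q$. Such a vertex exists because among $l-2\geq 3$ non-$a,b$ vertices one can choose $z$ with neighbours not forming an $\{a,b\}$ pair or an $\{x,y\}$ pair. Adjacent cycle vertices on the $x/y$ part are on the same side, which should make this choice possible.

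The chord $z'z''$ splits $\mathcal{C}$ into the triangle $(z'zz'')$, with weight in $A$, and an $(l-1)$-cycle still containing $a,b$. By induction the latter has weight in $(l-5)A+B-(l-5)m$. Additivity then gives a weight in $A+(l-5)A+B-(l-5)m-m=(l-4)A+B-(l-4)m$, which closes the induction.

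The orientation bookkeeping, namely choosing the orientation of each piece consistent with that of $\mathcal{C}$, is routine once additivity is stated.
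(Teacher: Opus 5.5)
Your decomposition is the paper's own. The paper fans $\mathcal C$ from $a$ into the triangles $(az_jz_{j+1})$ plus a single $4$-cycle $(az_{i-1}bz_{i+1})$, each chord costing $\overline{uv}+\overline{vu}=m$. Your ear-peeling induction produces the same kind of decomposition, and your existence argument for an ear $z$ and your additivity bookkeeping are fine.

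The gap is exactly the case you flagged, and it cannot be closed the way you hope. $A+A-m=\{m-2,2m,4m+2\}$ is disjoint from $B=\{m-1,3m+1\}$ for every $m\geq 1$. So a $4$-cycle $(ax_ibx_{i'})$ never has weight in $B$, whatever colour conditions hold. The statement as written is therefore false.
\begin{itemize}
\item \emph{Concrete counterexample.} Take $m=1$, $r=0$, $k=2$; this is the mutation of $\mathbb D_4$ at its branch vertex. Use colour-$0$ arrows $a\to x_1\to x_2\to a$ and $b\to x_1$, $x_2\to b$. Both triangles have weight $0=m-1$, yet $(ax_1bx_2)$ has weight $0+1+1+0=2$ in both orientations, while $B=\{0,4\}$.
\item \emph{Not only when $r=0$.} If $r\geq 1$ and $k\geq 2$, add the cycles $(ax_1by_1)$ and $(ay_1bx_2)$, whose weights lie in $B$ by condition (4) of type I. This gives $\overline{(ax_1bx_2)}+2m\in B+B$. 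Since $0\leq\overline{(ax_1bx_2)}\leq 4m$, this forces $\overline{(ax_1bx_2)}=2m$.
\item \emph{Not only when $l=4$.} Every element of $A$ and of $B$ is $\equiv m-1 \pmod{m+2}$, so the target set $(l-4)A+B-(l-4)m$ lies in the residue class of $m+3-l$. A cycle whose two $a$--$b$ arcs lie in the same piece triangulates into $l-2$ triangles with $l-3$ chords, so its weight is $\equiv m+2-l$.
\end{itemize}

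The correct statement needs the extra hypothesis that $\mathcal C$ contains some $x_i$ and some $y_j$. This is your second and third readings, which coincide because the interior of each $a$--$b$ arc of $\mathcal C$ lies in a single piece.

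The paper's proof makes this assumption silently. It asserts $\overline{(az_{i-1}bz_{i+1})}\in B$, but Proposition \ref{colores de los 3 ciclos}(b) gives this only when $\{z_{i-1},z_{i+1}\}$ consists of one $x$ and one $y$.

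Under that hypothesis your induction closes, provided you add one observation. An ear vertex $z$ is never the only interior vertex of an $a$--$b$ arc, since its cycle-neighbours would then be $a$ and $b$, which are not adjacent. So peeling ears keeps both arcs non-empty and on their original sides. The terminal $4$-cycle is then of the form $(x_iay_jb)$, to which Proposition \ref{colores de los 3 ciclos}(b) applies.
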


\begin{proof}
We can assume without loss of generality that $z_1=a$. Therefore $b=z_i$ for some $3\leq i < l$.  Since $Q$ is skew-symmetric,  $\overline{z_{j}a}+\overline{az_{j}}=m$ for all $j\neq i$ and  $2\leq j \leq l$. Then, the $l$-cycle $(a z_2 \cdots z_l)$ has weight $\overline{(a z_2\cdots z_l)} = \sum_{j=2}^{i-2} \overline{(az_{j}z_{j+1})} + \overline{(az_{i-1}b z_{i+1})} + \sum_{j=i+1}^{l-1} \overline{(az_{j}z_{j+1})}  - (l-4)m \in (l-4)A + B -(l-4)m$.

\end{proof}

\begin{obs}
The minimum value of $(l-4)A + B -(l-4)m $ is $(l-4)(m-1) + (m-1) -(l-4)m = m+3-l$ and the second minimum possible value is $(l-5)(m-1) + (2m+1) + (m-1) -(l-4)m=2m+5-l$. 
\end{obs}

In particular, we have the following corollary.

\begin{coro}
Let $Q$ be the quiver ${}^aQ^b[x_1,\cdots,x_k] \cup {}^aQ^b[y_1,\cdots,y_r]$ with $r+k\geq 2$. Let $\mathcal{C}=(z_1\ldots z_l)$ be a $l$-cycle in $Q$, with $l\geq 4$ and $a,b\in \mathcal{C}_0$ such that $\overline{(z_1\cdots z_l)}<2m+5-l$, then  $\overline{(z_1\cdots z_l)}=m+3-l$.
\end{coro}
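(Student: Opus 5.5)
This corollary follows directly from Lemma \ref{coloracion de un ciclo} together with the Remark after it. The plan is to pin down the set $(l-4)A+B-(l-4)m$ completely and then observe that its two smallest elements are $m+3-l$ and $2m+5-l$.

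First, by Lemma \ref{coloracion de un ciclo}, the weight $\overline{(z_1\cdots z_l)}$ lies in $(l-4)A+B-(l-4)m$. So it has the form
\[
\overline{(z_1\cdots z_l)}=s(m-1)+(l-4-s)(2m+1)+\beta-(l-4)m,
\]
where $0\le s\le l-4$ counts how many summands from $A$ equal $m-1$, and $\beta\in\{m-1,3m+1\}$.

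Next, take the element with every choice minimal, namely $s=l-4$ and $\beta=m-1$. This gives $m+3-l$. Any other element is obtained from it by raising at least one choice. Changing an $A$-summand from $m-1$ to $2m+1$ raises the value by $m+2$. Changing $\beta$ from $m-1$ to $3m+1$ raises it by $2m+2$. Hence every element other than the minimum is at least $m+3-l+(m+2)=2m+5-l$, since $m+2\le 2m+2$. The value $2m+5-l$ is attained only when $l\ge 5$; when $l=4$ the increment from $\beta$ gives something even larger. In either case the bound holds.

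Finally, if $\overline{(z_1\cdots z_l)}<2m+5-l$, the weight cannot be any non-minimal element of the set, so it must equal $m+3-l$. There is no real obstacle here. The only point needing care is checking that the increment $m+2$ never exceeds the increment $2m+2$, which holds because $m\ge 1$; this makes $2m+5-l$ a genuine lower bound for all non-minimal values.
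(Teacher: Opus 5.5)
Your deduction follows the paper's route exactly: the corollary is read off from Lemma \ref{coloracion de un ciclo} and the Remark naming $m+3-l$ and $2m+5-l$ as the two smallest elements of $(l-4)A+B-(l-4)m$. Your increment comparison ($m+2$ versus $2m+2$) is correct, and so is your separate handling of $l=4$, where there are no $A$-summands.

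The gap is in the lemma you take for granted. Its proof needs the middle piece $(a z_{i-1} b z_{i+1})$ to be a cycle of the form $(x_i b y_j a)$, so that its weight lies in $B$. There are no arrows between the $x$'s and the $y$'s, and $a,b$ cannot be interior to an arc, so each of the two arcs of $\mathcal{C}$ from $a$ to $b$ stays inside a single block. Hence the lemma applies only when $\mathcal{C}$ meets both $\{x_1,\dots,x_k\}$ and $\{y_1,\dots,y_r\}$. If both arcs lie in the same block, cut $(a z_{i-1} b z_{i+1})$ along the chord $z_{i-1}z_{i+1}$: its weight then lies in $2A-m=\{m-2,2m,3m+2\}$, not in $B$. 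In that case the corollary itself is false, so no argument can rescue your step.

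Here is a concrete counterexample. Take $m=1$ and mutate the standard $\mathbb{D}_4$ quiver ($1\to2$, $2\to3$, $2\to4$) at vertex $2$. The colour-$0$ arrows become $2\to1$, $3\to2$, $4\to2$, $1\to3$, $1\to4$. This is ${}^aQ^b[x_1,x_2]$ with $a=3$, $b=4$, $x_1=1$, $x_2=2$, $r=0$, $k=2$, and every triangle has colouration $0=m-1$. The $4$-cycle $(3\,1\,4\,2)$ has weight $\overline{31}+\overline{14}+\overline{42}+\overline{23}=1+0+0+1=2$, and also weight $0+1+1+0=2$ in the other direction. So the weight satisfies $2<2m+5-4=3$, yet it differs from $m+3-4=0$.

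The failure is systematic, not an accident of $r=0$. If $r\geq1$ and $k\geq2$, then
\[
\overline{(a\,x_p\,b\,x_q)}=2m+\overline{(y_1\,a\,x_p\,b)}-\overline{(y_1\,a\,x_q\,b)}.
\]
Condition (4) of type I puts both of the last two weights in $\{m-1,3m+1\}$. Since the left side lies between $0$ and $4m$, it must equal $2m$. That value is always below $2m+1$ but never equal to $m-1$.

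Your argument, like the paper's, is therefore valid only under the extra hypothesis that $\mathcal{C}$ passes through both quasi-complete pieces, which in particular requires $r,k\geq1$. You should state that hypothesis explicitly rather than assert that there is no obstacle.
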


Proposition \ref{colores de los 3 ciclos} can be generalised in the following way.\\

\begin{teo}\label{theo-coloracion ciclos}
If $Q$ is the quiver ${}^aQ^b[x_1,\cdots,x_k] \cup {}^aQ^b[y_1,\cdots,y_r]$ with $r+k\geq 2$ and $\mathcal{C}=(z_1\ldots z_l)$ is an $l$-cycle in $Q$, with $l\geq 4$ and $a,b\in \mathcal{C}_0$ then $\kappa(\mathcal{C})=m+3-l$.
\end{teo}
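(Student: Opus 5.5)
We want to show that the colouration, i.e.\ the minimum of the two orientation weights, equals $m+3-l$. The plan is to compute the weight of both orientations of $\mathcal{C}$ with Lemma \ref{coloracion de un ciclo}, and then pin down which value occurs by exploiting skew-symmetry between the two orientations.

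First, I fix notation. Write $W=\overline{(z_1\cdots z_l)}$ and $W'=\overline{(z_1z_l\cdots z_2)}$ for the weights of the two orientations. Skew-symmetry gives $\overline{z_iz_{i+1}}+\overline{z_{i+1}z_i}=m$ for each arrow of the cycle. Summing over the $l$ arrows yields
\[ W+W'=lm. \]
Both orientations are $l$-cycles containing $a$ and $b$. By Lemma \ref{coloracion de un ciclo}, both $W$ and $W'$ lie in $S_l:=(l-4)A+B-(l-4)m$.

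Second, I describe $S_l$ explicitly. An element of $S_l$ is obtained by choosing $j$ of the $l-4$ triangles with weight $2m+1$ (the rest having weight $m-1$) and choosing the value of $B$. Its value is
\[ m+3-l+j(m+2)+\epsilon(2m+2), \qquad 0\le j\le l-4,\ \epsilon\in\{0,1\}. \]
So every element has the form $m+3-l+t$ with $t$ a nonnegative integer combination of $m+2$ and $2m+2$.

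Third, I show that one of $W,W'$ equals the minimum value $m+3-l$. Write $W=m+3-l+t$ and $W'=m+3-l+t'$. From $W+W'=lm$ we get
\[ t+t'=lm-2(m+3-l)=(l-2)(m+2)-2. \]
The task is to show that $t=0$ or $t'=0$. Write $t=j(m+2)+\epsilon(2m+2)$ and $t'=j'(m+2)+\epsilon'(2m+2)$. Substituting, we need
\[ (j+j'+2\epsilon+2\epsilon'-l+2)(m+2)=2(\epsilon+\epsilon')-2. \]
The right-hand side lies in $\{-2,0,2\}$, and $m+2\ge 3$, so both sides must vanish. This forces $\epsilon+\epsilon'=1$ and $j+j'=l-4$.

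This step is the main obstacle, because the counting constraint alone still allows $0<j<l-4$. To close it I would use structure rather than counting. The decomposition in Lemma \ref{coloracion de un ciclo} fans triangles $(az_jz_{j+1})$ from $a$, and reversing the orientation exchanges which triangles are heavy. I would therefore argue by induction on $l$ via chords: inside ${}^aQ^b$ every pair of vertices other than $\{a,b\}$ is adjacent. For $l\ge5$, pick a chord $z_jz_{j+2}$ avoiding the $\{a,b\}$ pair. It splits $\mathcal{C}$ into a triangle $(z_jz_{j+1}z_{j+2})$ and an $(l-1)$-cycle still containing $a$ and $b$. For suitable orientation these satisfy
\[ W=W_{\triangle}+W_{l-1}-m. \]
By the inductive hypothesis the $(l-1)$-cycle has weight in $\{m+4-l,\ (l-1)m-(m+4-l)\}$. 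Proposition \ref{colores de los 3 ciclos}(a) puts the triangle in $\{m-1,2m+1\}$. The mixed combinations give values of $W$ that, together with $W'=lm-W$, must both lie in $S_l$; I would check that this fails by the same divisibility argument, with remainders taken mod $m+2$. The base case $l=4$ is Proposition \ref{colores de los 3 ciclos}(b). Hence $\min(W,W')=m+3-l$, which is $\kappa(\mathcal{C})=m+3-l$.
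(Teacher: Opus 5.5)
\emph{Gap at the step you flag, and the statement itself is false as written.}

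Granting Lemma \ref{coloracion de un ciclo}, your first three steps are sound. Skew-symmetry gives $W+W'=lm$. Every element of $(l-4)A+B-(l-4)m$ has the form $m+3-l+j(m+2)+\epsilon(2m+2)$. Your congruence argument correctly forces $\epsilon+\epsilon'=1$ and $j+j'=l-4$. The gap is exactly the step you flag as the main obstacle, and the chord induction does not close it. Take the mixed case $W_{\triangle}=2m+1$, $W_{l-1}=m+4-l$ (the other mixed case is symmetric):
\begin{itemize}
\item $W=2m+5-l=(m+3-l)+(m+2)$, so $j=1$, $\epsilon=0$;
\item $W'=lm-W=(m+3-l)+(l-5)(m+2)+(2m+2)$, so $j'=l-5$, $\epsilon'=1$.
\end{itemize}
Both values lie in $(l-4)A+B-(l-4)m$ and satisfy all your constraints, so no divisibility contradiction arises. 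The counting data cannot distinguish colouration $m+3-l$ from $2m+5-l$.

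This cannot be repaired, because $2m+5-l$ actually occurs. Take $m=2$, $k=2$, $r=1$. Let $a\to x_2\to x_1\to b\to y_1\to a$ be an oriented $5$-cycle of colour $0$, and give the chords $ax_1$ and $bx_2$ colour $1$ in both directions.
\begin{itemize}
\item The triangles $(ax_2x_1)$, $(x_2x_1b)$ and the $4$-cycles $(y_1ax_1b)$, $(y_1ax_2b)$ all have weight $1=m-1$, so this is a type I quiver meeting every hypothesis.
\item The $5$-cycle $(ax_1x_2by_1)$ has weights $1+2+1+0+0=4$ and $6$, so $\kappa=4\neq m+3-5=0$. The chord $ax_2$ splits it into a triangle of weight $2m+1$ and a $4$-cycle of weight $m-1$, which is precisely your mixed case.
\end{itemize}

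Your base case and second step have a further problem. Proposition \ref{colores de los 3 ciclos}(b), and therefore Lemma \ref{coloracion de un ciclo}, only control $4$-cycles $(x_iay_jb)$ whose two $a$--$b$ paths run through different sides. For $m=1$, $r=0$, $k=2$ (Vatne's type II), the $4$-cycle $(x_1ax_2b)$ has weight $2$ in both directions, not $m-1=0$.

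The paper's averaging proof has the same hole as yours. It shows only that \emph{some} $c_j$, obtained by re-inserting $z_1$ elsewhere, attains the minimal weight, and then asserts this for $\mathcal{C}=c_1$. The natural salvageable statement is an existence one: some cycle through the given vertices, running through both sides, has weight $m+3-l$. That is also all the subsequent corollary on $(m+3)$-cycles of weight $0$ actually uses.
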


\begin{proof}
The result for $l=4$ follows from Proposition \ref{colores de los 3 ciclos}. Now assume that the result is true for some value of $l\geq 4$ and consider a $(l+1)$-cycle $\mathcal{C}=(z_0z_1z_2\ldots z_{l})$ in $Q$. Then, reordering indices if necessary, we can suppose  that $\overline{(z_0z_2\cdots z_l)}=m+3-l$ , $z_0=a$ and $z_{i+1}=b$ for some $i\leq l-2$. Then, we also have  $\overline{(z_0z_1z_{i+1}\cdots z_l)}=m+3-(l+2-i)$.\\

 Now we consider the cycles $c_1=(z_0z_{1}z_2 z_3 \cdots z_l)$, $c_2=(z_0z_2 z_{1} z_3 \cdots z_l)$, $\ldots$ , $c_{i}=(z_0z_2 z_3 \cdots z_iz_{1}z_{i+1}\cdots z_l)$ and calculate the sum weight of these cycles.
\begin{eqnarray*}
 \sum_{j=1}^{i} \overline{c_j} &=&  \sum_{j=2}^{i} (\overline{z_jz_{1}} + \overline{z_{1}z_{j}}) +  (i-1)(\overline{z_0z_2} + \overline{z_2z_3} + \cdots + \overline{z_{i}z_{i+1}}) + i ( \overline{z_{i+1}z_{i+2}} + \cdots +\overline{z_{l}z_0} ) +\overline{z_0z_{1}} + \overline{z_1z_{i+1}}  \\
                               &=&  m\cdot (i-1) + (i-1)\cdot(\overline{z_0z_2} + \overline{z_2z_3} + \cdots +\overline{z_{l}z_0}) +(\overline{z_0z_{1}} + \overline{z_1z_{i+1}} +\overline{z_{i+1}z_{i+2}} + \cdots +\overline{z_{l}z_0})\\
                               &=&  m\cdot(i-1) + (i-1) \cdot (m+3-l)+m+3-(l+2-i) \\
                               &=& i\cdot(2m+4-l) - (m+2)
\end{eqnarray*}
Then, the average weight of those cycles is $\frac{1}{i}\sum_{j=1}^{i} \overline{c_j}= 2m+4-l - \frac{m+2}{i}< 2m+4-l=2m+5-(l+1)$. This implies the existence of some cycle $c_j$ with $1\leq j \leq i$ such that $\overline{c_j}<2m+5-(l+1)$. By Lemma \ref{coloracion de un ciclo}, the only possibility is $\overline{c_j}=m+3-(l+1)$ which is the minimum possible weight of a $(l+1)$-cycle. Therefore  the colouration of the $(l+1)$-cycle $\mathcal{C}$ is $\kappa(\mathcal{C})=m+3-(l+1)$.

\end{proof}

Observe that the maximal length of a cycle in ${}^aQ^b[x_1,\cdots,x_k] \cup {}^aQ^b[y_1,\cdots,y_r]$ is $l=r+k+2$. If, in addition $r+k$ takes its maximal value $m+1$, then we have the following:\\

\begin{coro}\label{1<r+k=m+1 entonces flecha de color 0}
Let $Q$ be the quiver ${}^aQ^b[x_1,\cdots,x_k] \cup {}^aQ^b[y_1,\cdots,y_r]$ with $2 \leq r+k = m+1$. Then, there exists an $(m+3)$-cycle $\mathcal{C}=(z_1\ldots z_{m+3})$ in $Q$, with  $\overline{(z_1\cdots z_{m+3})}=0$. In particular, for every vertex $z\in \{x_1,\cdots,x_k,a,b,y_1,\cdots,y_r\}$ there exits a vertex $z'\in \{x_1,\cdots,x_k,a,b,y_1,\cdots,y_r\}\setminus \{z\}$ such that the colour  $\overline{zz'}=0$.
\end{coro}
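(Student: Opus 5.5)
The plan is to apply Theorem \ref{theo-coloracion ciclos} to a longest possible cycle through both $a$ and $b$, and then read off the colours along that cycle. Since $r+k=m+1\geq 2$, the quiver $Q={}^aQ^b[x_1,\cdots,x_k]\cup {}^aQ^b[y_1,\cdots,y_r]$ has exactly $r+k+2=m+3$ vertices. Every $x_i$ and every $y_j$ is joined to both $a$ and $b$. Any two distinct $x$'s are joined, and so are any two distinct $y$'s. The only pairs without an edge are $\{a,b\}$ and pairs $\{x_i,y_j\}$.

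First I construct an $(m+3)$-cycle through all the vertices, treating two cases.
\begin{itemize}
\item If $r,k\geq 1$, take $\mathcal{C}=(a\,x_1x_2\cdots x_k\,b\,y_1\cdots y_r)$. Consecutive $x$'s are adjacent, $x_k$ is adjacent to $b$, $b$ is adjacent to $y_1$, consecutive $y$'s are adjacent, and $y_r$ is adjacent to $a$.
\item If $r=0$ (and symmetrically if $k=0$), then $k=m+1\geq 2$. Take $\mathcal{C}=(a\,x_1\cdots x_j\,b\,x_{j+1}\cdots x_k)$ for any $1\leq j<k$. Every step is an edge of the quasi-complete quiver.
\end{itemize}
In both cases the length is $l=m+3\geq 4$ and $a,b\in\mathcal{C}_0$. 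Theorem \ref{theo-coloracion ciclos} then gives $\kappa(\mathcal{C})=m+3-l=0$. Since $\kappa$ is the minimum of the weights of the two orientations, one orientation $(z_1\cdots z_{m+3})$ has weight $0$.

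Next, colours lie in $\{0,\dots,m\}$ and are therefore nonnegative, so a path of weight $0$ has every arrow coloured $0$. Every vertex $z$ of $Q$ appears on the cycle, because the cycle is Hamiltonian. Taking $z'$ to be the successor of $z$ along the weight-$0$ orientation gives $z'\neq z$ with $\overline{zz'}=0$.

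The only point needing care is that the cycle really contains every vertex, including $a$ and $b$. This matters because Theorem \ref{theo-coloracion ciclos} requires $a,b\in\mathcal{C}_0$ and $l\geq 4$, and the two-case construction above secures both. I expect no real obstacle beyond that.
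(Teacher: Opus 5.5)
For $r,k\geq 1$ your argument is the intended one. The cycle $(a\,x_1\cdots x_k\,b\,y_1\cdots y_r)$ has length $m+3$, Theorem \ref{theo-coloracion ciclos} gives it colouration $0$, and the successor of $z$ along the weight-$0$ orientation is the required $z'$.

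The gap is your second case, $r=0$ (or $k=0$). Theorem \ref{theo-coloracion ciclos} is stated with only $r+k\geq 2$, but its proof needs both pieces to be nonempty. Its base case $l=4$ is Proposition \ref{colores de los 3 ciclos}(b), and Lemma \ref{coloracion de un ciclo} uses $B=\{m-1,3m+1\}$ for the $4$-cycle through $a$ and $b$. Both rest on condition (4) of type I, which is imposed only on the cycles $(y_jax_ib)$ and only when $r,k\geq 1$. When $r=0$ the $4$-cycles through $a,b$ have the form $(a\,x_i\,b\,x_j)$, and nothing forces their colouration to be $m-1$.

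In fact the first assertion is false when $r=0$. Take $m=1$, $k=2$, and colour-$0$ arrows $x_1\to a\to x_2\to x_1$ and $x_1\to b\to x_2$. This quiver satisfies Definition \ref{la clase} (type I with $r=0$), and mutating at $x_2$ gives the $\mathbb{D}_4$ star. Yet the only $4$-cycle $(a\,x_1\,b\,x_2)$ has weight $2$ in both orientations.

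More generally, let $r=0$ and $k=m+1$. After relabelling, every Hamiltonian cycle is $(a\,x_1\cdots x_j\,b\,x_{j+1}\cdots x_k)$, and its weight is $\overline{a x_1\cdots x_k a}+\overline{x_j b x_{j+1} x_j}-m$. Fan the $(m+2)$-cycle $a x_1\cdots x_k a$ from $a$ into triangles, each of weight $m-1$ or $2m+1$; this shows the first term equals $s(m+2)$ for some $s\geq 0$. The second term is $m-1$ or $2m+1$. So the total is $s(m+2)-1$ or $s(m+2)+m+1$, which is never $0$.

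So when one side is empty only the second assertion survives, and it needs a different argument. This is why the paper treats that case separately in Lemma \ref{r=0 y k=m+1 entonces flecha de color 0}. Take the arrows leaving $z$ inside one of the $(m+2)$-cliques $\{a,x_1,\dots,x_{m+1}\}$ or $\{b,x_1,\dots,x_{m+1}\}$. By Proposition \ref{colores de los 3 ciclos}(a), their $m+1$ colours are pairwise distinct. Hence they exhaust $\{0,\dots,m\}$, and one of them is $0$. You should restrict your cycle argument to $r,k\geq 1$ and use this pigeonhole argument when $r=0$ or $k=0$.
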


Observe that every Hamiltonian cycle in ${}^aQ^b[x_1,\cdots,x_k] \cup {}^aQ^b[y_1,\cdots,y_r]$ has length $l=r+k+2$. If $2 \leq r+k < m+1$,  Theorem \ref{theo-coloracion ciclos} implies the following: \\

\begin{coro}
Let $Q$ be the quiver ${}^aQ^b[x_1,\cdots,x_k] \cup {}^aQ^b[y_1,\cdots,y_r]$ with $2 \leq r+k < m+1$. Then, for every Hamiltonian cycle  $\mathcal{C}=(z_1 \ldots z_{r+k+2})$ in $Q$,   the colouration  $\overline{(z_1\cdots z_{r+k+2})}\neq 0$.
\end{coro}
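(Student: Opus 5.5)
This is a direct consequence of Theorem \ref{theo-coloracion ciclos}. We argue by contradiction. Suppose some Hamiltonian cycle $\mathcal{C}=(z_1\ldots z_{r+k+2})$ in $Q$ satisfies $\overline{(z_1\cdots z_{r+k+2})}=0$.

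First, observe that a Hamiltonian cycle passes through every vertex, so in particular $a,b\in\mathcal{C}_0$. Its length is $l=r+k+2\geq 4$, since $r+k\geq 2$. Hence Theorem \ref{theo-coloracion ciclos} applies and gives
$$\kappa(\mathcal{C})=m+3-l=m+1-(r+k).$$

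Second, we use the definition of colouration as the minimum of the two orientations' weights. This gives
$$\overline{(z_1\cdots z_{l})}\;\geq\;\kappa(\mathcal{C})=m+1-(r+k).$$
The hypothesis $r+k<m+1$ makes the right-hand side at least $1$. So every orientation of every Hamiltonian cycle has weight at least $1$, and in particular is nonzero. This contradicts the assumption and proves the corollary.

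The only point requiring care is the notation $\overline{(z_1\cdots z_{r+k+2})}$ in the statement. It denotes the weight of one specific orientation, whereas Theorem \ref{theo-coloracion ciclos} computes the colouration, which is the minimum over the two orientations. We bridge this by the inequality $w\geq\kappa$, which holds for either orientation. For completeness, we may also note the weight of the reverse orientation. By skew-symmetry it equals $lm-\overline{(z_1\cdots z_l)}\geq lm-\kappa(\mathcal{C})$, which is positive as well. So no orientation can have weight $0$, and there is no genuine obstacle.
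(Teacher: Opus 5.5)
Your argument is correct and is the paper's own route: a Hamiltonian cycle has length $l=r+k+2\geq 4$ and contains $a$ and $b$, so Theorem \ref{theo-coloracion ciclos} gives $\kappa(\mathcal{C})=m+1-(r+k)\geq 1$, and the weight of either orientation is at least this minimum. One small slip in your closing remark: since $\overline{(z_1\cdots z_l)}\geq\kappa(\mathcal{C})$, the reverse weight satisfies $lm-\overline{(z_1\cdots z_l)}\leq lm-\kappa(\mathcal{C})$, not $\geq$. The remark is unnecessary anyway, because the reverse orientation's weight is already bounded below by $\kappa(\mathcal{C})$ by the definition of the colouration as a minimum.
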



If $Q$ is a quiver of type I with $r=0$ we have the analogous result of Corollary \ref{1<r+k=m+1 entonces flecha de color 0}.

\begin{lema}\label{r=0 y k=m+1 entonces flecha de color 0}
Assume that $Q$ is a quiver of type I with $r=0$ and $k=m+1$. Then, for every vertex $z\in \{x_1,\cdots,x_{m+1},a,b\}$ there exits a vertex $z'\in \{x_1,\cdots,x_{m+1},a,b\}\setminus \{z\}$ such that the colour  $\overline{zz'}=0$.
\end{lema}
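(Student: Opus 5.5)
With $r=0$ and $k=m+1$, the relevant subquiver is the single $[a,b]$-quasi-complete quiver ${}^aQ^b[x_1,\ldots,x_{m+1}]$ on the $m+3$ vertices $a,b,x_1,\ldots,x_{m+1}$. The plan is to reproduce the argument behind Corollary \ref{1<r+k=m+1 entonces flecha de color 0} in this setting. Concretely, I will exhibit a Hamiltonian $(m+3)$-cycle through $a$ and $b$ whose weight is $0$. Since every arrow of such a cycle has non-negative colour, all its arrows must then have colour $0$. Every vertex $z$ of the set has an outgoing arrow on this cycle, say $z\to z'$, and this gives $\overline{zz'}=0$.

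The main step is to compute the colouration of cycles of length $l\ge 4$ that contain both $a$ and $b$. A cycle of this kind has the form
\[
a\, x_{i_1}\cdots x_{i_s}\, b\, x_{j_1}\cdots x_{j_t}\, a ,
\]
with $s,t\geq 1$, because $a$ and $b$ are not adjacent.
\begin{enumerate}
\item First I will treat the $4$-cycles $(ax_iby_j)$ with $y_j$ replaced by some $x_j$. Since $a$ and $b$ are not adjacent, I will triangulate the $4$-cycle $(ax_ibx_j)$ through the diagonal $x_ix_j$. The triangles $(ax_ix_j)$ and $(bx_ix_j)$ each have colouration $m-1$, by condition (3) of Type I.
\item By skew-symmetry, the two opposite orientations of the diagonal contribute exactly $m$ in total. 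The weight of the $4$-cycle in a suitable orientation is therefore $(m-1)+(m-1)-m=m-2$, possibly after choosing orientations so that both triangles realise their minimum. I will check this case analysis carefully, using Proposition \ref{Propiedades de la clase del An}(c) to control how the triangle orientations interact. This step plays the role of Proposition \ref{colores de los 3 ciclos}(b) and gives the base value $\kappa=m+3-4$.
\item With the base case $l=4$ in hand, I will run the same averaging induction as in the proof of Theorem \ref{theo-coloracion ciclos}. Lemma \ref{coloracion de un ciclo} carries over verbatim, since the only ingredients it uses are triangles of colouration $m-1$ and $4$-cycles through $a,b$ with the base value. The conclusion will be $\kappa(\mathcal{C})=m+3-l$ for every $l$-cycle through $a$ and $b$.
\end{enumerate}

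Taking $l=m+3$, which means using all of $x_1,\ldots,x_{m+1}$ (possible since $m+1\ge 2$), I obtain a Hamiltonian cycle of weight $0$, and this finishes the argument as described in the first paragraph.

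The obstacle I expect is the base case. In the $r,k\ge1$ situation, condition (4) of Type I is imposed directly on the $4$-cycles, whereas here those $4$-cycles have to be derived from triangle colourations. The delicate point is to show that the minimum over the two orientations is exactly $m-2$ and not some other value from the set $2A-m$. To handle this, I will first try to pin down the colours $\overline{ax_i}$ and $\overline{ax_j}$ using Proposition \ref{colores de los 3 ciclos}(a), which says that they differ. If direct computation gets messy, an alternative is to simply observe that the $m+2$ vertices $a,x_1,\ldots,x_{m+1}$ form a complete quiver whose triangles all have colouration $m-1$. Such a quiver is a clique in $\mathcal{Q}^A_m$ of maximal size $m+2$. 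The plan would then be to invoke the known fact from \cite{GQP} that such a clique contains a Hamiltonian cycle of weight $0$; this handles $a$ and the $x_i$. For $b$ I would apply the same argument to the clique on $b,x_1,\ldots,x_{m+1}$.
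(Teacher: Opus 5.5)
\textbf{The main plan fails: the Hamiltonian cycle it aims for does not exist when $r=0$.} Any cycle of length $l\ge 4$ through $a$ and $b$ in ${}^aQ^b[x_1,\ldots,x_{m+1}]$ passes through some $x_i$. That $x_i$ is adjacent to every other vertex, so the cycle can be fan-triangulated from $x_i$. This gives $l-2$ triangles of $Q$, each of weight $m-1$ or $2m+1$ in the induced orientation, together with $l-3$ chords that each contribute $m$. Since $2m+1\equiv m-1 \pmod{m+2}$, the weight of the cycle satisfies
\[
w\equiv (l-2)(m-1)-(l-3)m = m+2-l \pmod{m+2}.
\]
This is the $\mathbb{A}$-type value, not the value $m+3-l$ you are aiming for. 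For $l=m+3$ you get $w\equiv -1$, so no Hamiltonian cycle through $a$ and $b$ can have weight $0$.

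Your own base computation already shows the problem. The $4$-cycles $(ax_ibx_j)$ have weights in $\{m-2,2m,3m+2\}$, so $\kappa\in\{m-2,2m\}$. But $m-2\neq m+3-4=m-1$, and step (2) silently identifies the two. For the same reason Lemma~\ref{coloracion de un ciclo} does not carry over. Its set $B=\{m-1,3m+1\}$ is the weight set of the induced $4$-cycles $(x_iby_ja)$ imposed by condition (4) of Type I. That condition is unavailable when $r=0$, and $B$ is simply wrong for the chorded cycles $(ax_ibx_j)$.

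A concrete check: take $m=1$, $k=2$ (Vatne's type II), with colour-$0$ arrows $a\to x_1\to x_2\to a$ and $x_1\to x_2\to b\to x_1$. The only Hamiltonian cycle is $(ax_1bx_2)$, and it has weight $2$ in both orientations. Yet every vertex does have an outgoing $0$-arrow, as the lemma asserts.

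\textbf{Your fallback is sound and is the route to use.} It works in the two cliques $\{a,x_1,\ldots,x_{m+1}\}$ and $\{b,x_1,\ldots,x_{m+1}\}$ separately, so the $0$-arrows are never required to close up through both $a$ and $b$.

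The paper's proof is this same idea in more elementary form, with no appeal to the $\mathbb{A}$-type cycle theorem. If $\overline{v_1v_2}=\overline{v_1v_3}$, then both orientations of the triangle $(v_1v_2v_3)$ weigh at least $m$. So in a triangle of colouration $m-1$, the two arrows leaving a common vertex have different colours (Proposition~\ref{colores de los 3 ciclos}(a)). Consequently the $m+1$ colours $\overline{ax_1},\ldots,\overline{ax_{m+1}}$ are pairwise distinct and exhaust $\{0,\ldots,m\}$, and the same holds for $b$. Likewise $\overline{x_1a},\overline{x_1x_2},\ldots,\overline{x_1x_{m+1}}$ are pairwise distinct and exhaust $\{0,\ldots,m\}$. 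Pigeonhole then gives the required $0$-coloured arrow at every vertex. Drop the Hamiltonian-cycle argument and lead with this.
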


\begin{proof}
Assume that $z=a$ ( or $z=b$). According to Proposition \ref{colores de los 3 ciclos}, all the $m+1$ colours $\overline{ax_1}, \cdots, \overline{ax_{m+1}} $ are distinct. The claim follows since there are exactly $m+1$ different colours in the set $\{0, \cdots , m\}$.\\
If instead $z=x_i$ ( for some $1\leq i \leq m+1$) we can assume without loss of generality that $z=x_1$. Then, by Proposition \ref{colores de los 3 ciclos},  the $m+1$ colours $\overline{x_1a}, \overline{x_1x_2}\cdots, \overline{x_1x_{m+1}} $ are distinct. The same reasoning as above implies the claim.
\end{proof}

\medskip

A fundamental property of the class $\mathcal{Q}_{n,m}^D$  is the following.

\begin{prop}\label{cerrado por mutaciones}
The class $\mathcal{Q}_{n,m}^D$ is closed under  coloured quiver  mutation.
\end{prop}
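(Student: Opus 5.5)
The plan is a direct case analysis. Fix $Q\in\mathcal{Q}_{n,m}^D$ and a vertex $v$, and show that $\mu_v(Q)$ is again of type I or type II. Coloured mutation only changes arrows incident to $v$ and arrows between neighbours $i,k$ of $v$ with $\overline{iv}$ arbitrary and $\overline{vk}=0$. So $\mu_v$ acts locally on the subquiver induced by $v$ and its neighbours. The type of $v$ therefore decides the case:
\begin{itemize}
\item[(i)] $v$ is an interior vertex of one of the $\mathbb{A}$-pieces $Q^i$, not glued to the central structure;
\item[(ii)] $v$ is a gluing vertex, i.e.\ some $x_i,y_j$ in type I, or some $z_j^{\alpha_i}$ or $x_i$ in type II, which also lies in a $Q^i$;
\item[(iii)] $v\in\{a,b\}$ in type I;
\item[(iv)] $v$ lies on the central cycle $\mathcal{O}$ in type II.
\end{itemize}

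\textbf{Case (i).} Proposition \ref{Propiedades de la clase del An}(a) gives $\mu_v(Q^i)\in\mathcal{Q}_m^A$. The rest of $Q$ is untouched, so $Q$ keeps its type.

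\textbf{Case (ii).} The vertex $v$ lies in two cliques. One is the clique of its $\mathbb{A}$-piece. The other is the quasi-complete quiver (type I) or the complete quiver $Q_{\alpha_i}$ (type II).

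The triangle conditions (Proposition \ref{colores de los 3 ciclos} and Proposition \ref{Propiedades de la clase del An}(c)) say the colours from $v$ inside a clique are pairwise distinct. Hence at most one neighbour $k$ in each clique has $\overline{vk}=0$.

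Mutation then does one of two things:
\begin{itemize}
\item it just shifts colours; or
\item it transfers $k$ from one clique to the other. This is exactly the $\mathbb{A}_n$ clique-sliding computed in \cite{GQP}.
\end{itemize}
Next I check the size bounds, namely $r+k\le m+1$ and $r_{\alpha_i}\le m$, together with condition (6). When the transferred vertex would push a bound past its maximum, I use Corollary \ref{1<r+k=m+1 entonces flecha de color 0} and Lemma \ref{r=0 y k=m+1 entonces flecha de color 0}. These show that at maximal size a colour-$0$ arrow already exists inside the clique, so the bound cannot be exceeded.

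The colouration conditions are preserved for two reasons. Every triangle through $v$ gains $+1$ on the incoming arrow and $-1$ on the outgoing one, so its weight is unchanged. For new or removed cycles, Theorem \ref{theo-coloracion ciclos} gives the required weights, in particular $m-1$ for the 4-cycles $(y_jax_ib)$.

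\textbf{Case (iii): mutation at $a$.} Since $a,b$ are non-adjacent, and the cycle weights are controlled by Theorem \ref{theo-coloracion ciclos}, $a$ has at most one neighbour $z$ with $\overline{az}=0$.

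If no such neighbour exists, mutation only recolours, and the result is still of type I.

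If such a $z=x_i$ exists, then for each other neighbour $w$ an arrow $w\to x_i$ would be added. But all the $x$'s are already adjacent to $x_i$, and cancellation happens there. The $y_j$'s become adjacent to $x_i$ as well.

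The key claim is that when $r,k\ge1$ the result is a quiver of type II. Its central cycle is built from $a$, $b$ and $x_i$, with the remaining $x$'s and $y$'s forming complete quivers on the arrows of that cycle. Conversely, mutating type II at a cycle vertex whose cycle has length $3$ or $4$ can return type I. So cases (iii) and (iv) pair up. By Proposition \ref{Propiedades de la clase del An}(b)-style periodicity, $\mu_v^{m+1}=\mathrm{id}$, so it suffices to verify one direction of each transition carefully.

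\textbf{Case (iv): mutation at $x_i$ on $\mathcal{O}$.} If $\overline{x_ix_{i+1}}=0$, the cycle shortens by one: $x_i$ leaves $\mathcal{O}$ and joins the complete quiver on the new arrow $x_{i-1}x_{i+1}$. If $\overline{x_{i-1}x_i}=m$, the reverse happens and the cycle lengthens. In all cases $\kappa(\mathcal{O})=m-1$ is preserved, because the weight on the cycle changes by $+1-1$.

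The degenerate lengths $k=3,4$ are where the cycle collapses into the $a,b$ configuration of type I. Condition (6) is designed exactly for $k=3$.

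\textbf{Main obstacle.} I expect the hardest step to be these transitions between type I and type II: identifying $a,b$ after the mutation and verifying conditions (1)–(6), especially the bound in (6) and condition (5) on weights. I would first compute the transitions explicitly on the smallest cases ($r+k=2$, $k=3$), guided by the $\mathbb{D}_4$ example, and then extend them using the weight formulas of Lemma \ref{coloracion de un ciclo}.
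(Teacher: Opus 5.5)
The overall architecture matches the paper's: you localise $\mu_v$, handle vertices inside the $\mathbb{A}$-pieces with Proposition \ref{Propiedades de la clase del An}(a), split by where $v$ sits in the central structure, and control sizes with Corollary \ref{1<r+k=m+1 entonces flecha de color 0} and Lemma \ref{r=0 y k=m+1 entonces flecha de color 0}. However, your description of what $\mu_v$ does at the central vertices is wrong, and the ``key claim'' of case (iii) is false.

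Let $\overline{ax_i}=0$. For $j\neq i$ the triangle $(ax_ix_j)$ has colouration $m-1$. The orientation $a\to x_j\to x_i\to a$ has weight at least $m$, since $\overline{x_ia}=m$, so this forces $\overline{x_ja}=\overline{x_jx_i}-1$. Hence the arrow $x_j\to x_i$ created in step (1) of the algorithm has a colour different from the existing one, they cancel, and the edge $x_ix_j$ disappears. Each $y_l$ acquires an edge to $x_i$, and nothing is created between $a$ and $b$. So $\mu_a(Q)$ is again of type I: $x_i$ has merely migrated from ${}^aQ^b[x_1,\dots,x_k]$ to ${}^aQ^b[y_1,\dots,y_r]$, with $r+k$ unchanged (case (1) of the paper's proof). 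It is not of type II in general. For $k\geq 2$, both $(ax_jbx_i)$ and $(ax_jby_l)$ are holes of $\mu_a(Q)$, whereas a type II quiver has at most one hole, its central cycle. Your claim holds only for $r=k=1$, where the two types overlap.

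The real passage from type I to type II happens at the gluing vertices, which your case (ii) reduces to $\mathbb{A}$-style clique sliding. If $\overline{x_ia}=0$, the arrow $b\to x_i$ creates an arrow $b\to a$. In a genuine clique this would cancel against an existing arrow, but $a$ and $b$ are not adjacent, so a new edge $ab$ appears. Then $\mu_{x_i}(Q)$ has central triangle $(ax_ib)$, with the complete quivers $Q'[a,x_i,z_1,\dots,z_t]$, $Q'[b,x_1,\dots,x_k]$ and $Q'[a,b,y_1,\dots,y_r]$ on its arrows. The size bounds and condition (6) must then be checked for these.

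Your principle ``at most one colour-$0$ arrow per clique'' fails for the same reason: ${}^aQ^b[x_1,\dots,x_k]$ is not a clique. When the other quasi-complete quiver is empty, one may have $\overline{x_ia}=\overline{x_ib}=0$ (Remark \ref{no puedo tener 2 flechas de color cero hacia a y b}), even together with some $\overline{x_iz_j}=0$. These cases produce a new quasi-complete quiver ${}^aQ'^b[x_i,z_1,\dots]$ and need their own treatment.

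Case (iv) is also wrong. A cycle vertex with $\overline{x_{i-1}x_i}=m$, i.e. $\overline{x_ix_{i-1}}=0$, shortens the cycle exactly as $\overline{x_ix_{i+1}}=0$ does. The cycle grows only when you mutate at an off-cycle vertex $z\in Q_{\alpha_i}$ carrying a colour-$0$ arrow to a cycle vertex (case (3) of the paper).

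Finally, $\mu_v^{m+1}=\mathrm{id}$ on $\mathcal{Q}_{n,m}^D$ is Lemma \ref{mutacion invertible}, whose proof uses the present proposition, so invoking it here is circular. Even granted, it gives $\mu_v^{-1}=\mu_v^{m}$, not $\mu_v^{-1}=\mu_v$. So checking a transition in one direction tells you nothing about $\mu_v$ applied to its image.
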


\begin{proof}

Let $Q$ be a quiver in the class $\mathcal{Q}_{n,m}^D$ and  let $v$ be a vertex in $Q_0$. If $Q'$ is the quiver obtained after mutating $Q$ at the vertex $v$, we will show that $Q'$ belongs to the class $\mathcal{Q}_{n,m}^D$.\\

If there are not arrows of colour zero leaving the vertex $v$, the mutation at the vertex $v$ simply changes the colour of the arrows arriving or leaving $v$ and preserves without changes all the others colours. As in the $\mathbb{A}$-case, if there is a triangle $(vwu)$ in $Q$ with colouration $\kappa((vwu))=m-1$ the corresponding triangle in $Q'$ has the same colouration.\\

For the remaining cases, note that by Proposition \ref{Propiedades de la clase del An}, the class $\mathcal{Q}_{n,m}^A$ is closed under coloured quiver  mutation. Therefore, it suffices to consider the case where $v$ belongs to the set of vertices $\{a,b,x_1, \cdots, x_k,y_1,\cdots, y_r\}$ if $Q$ is of type I, or to the set  $\{x_1, \cdots, x_k\}\cup \{z : \text{there exits an arrow } zx_i \}$ if $Q$ is of type II. Consequently, the proof naturally splits into four cases.\\


(1) \emph{$Q$ is a quiver of type I and $v=a$}: In this case, the only possibility is that there exists a unique arrow of colour $0$ incident to some $x_i$ or $y_j$, according to Proposition \ref{colores de los 3 ciclos}. We may assume, without loss of generality, that the arrow points to $x_1$. The following figure illustrates this situation:\\

\adjustbox{scale=.7,center}{
\begin{tikzcd}
	& {y_1} && {{x_1}} &&&&& {y_1} & {x_1} \\
	{y_2} && \textcolor{rgb,255:red,153;green,92;blue,214}{a} && {x_2} &&& {y_2} &&& a && {x_2} \\
	{y_{k-1}} && b && {x_{r-1}} & {} & {} & {y_{k-1}} &&& b && {x_{r-1}} \\
	& {y_k} && {x_r} &&&&&& {y_k} && {x_r} \\
	&&&& {}
	\arrow[no head, from=1-2, to=2-1]
	\arrow[no head, from=1-2, to=3-1]
	\arrow[no head, from=1-2, to=3-3]
	\arrow[no head, from=1-2, to=4-2]
	\arrow[no head, from=1-4, to=2-5]
	\arrow[no head, from=1-4, to=4-4]
	\arrow[no head, from=1-9, to=1-10]
	\arrow[no head, from=1-9, to=2-8]
	\arrow[no head, from=1-9, to=2-11]
	\arrow[no head, from=1-9, to=3-8]
	\arrow[no head, from=1-9, to=3-11]
	\arrow[no head, from=1-9, to=4-10]
	\arrow[no head, from=1-10, to=2-8]
	\arrow["0", from=1-10, to=2-11]
	\arrow[no head, from=1-10, to=3-8]
	\arrow[no head, from=1-10, to=3-11]
	\arrow[no head, from=1-10, to=4-10]
	\arrow[no head, from=2-1, to=2-3]
	\arrow[dashed, no head, from=2-1, to=3-1]
	\arrow[no head, from=2-1, to=3-3]
	\arrow[no head, from=2-1, to=4-2]
	\arrow[no head, from=2-3, to=1-2]
	\arrow["0", color={rgb,255:red,153;green,92;blue,214}, from=2-3, to=1-4]
	\arrow[no head, from=2-3, to=2-5]
	\arrow[no head, from=2-3, to=3-5]
	\arrow[no head, from=2-3, to=4-4]
	\arrow[dashed, no head, from=2-5, to=3-5]
	\arrow[no head, from=2-5, to=4-4]
	\arrow[no head, from=2-8, to=2-11]
	\arrow[dashed, no head, from=2-8, to=3-8]
	\arrow[no head, from=2-8, to=3-11]
	\arrow[no head, from=2-8, to=4-10]
	\arrow[no head, from=2-11, to=2-13]
	\arrow[no head, from=2-11, to=3-13]
	\arrow[no head, from=2-11, to=4-10]
	\arrow[no head, from=2-11, to=4-12]
	\arrow[dashed, no head, from=2-13, to=3-13]
	\arrow[no head, from=2-13, to=4-12]
	\arrow[no head, from=3-1, to=2-3]
	\arrow[no head, from=3-1, to=3-3]
	\arrow[no head, from=3-1, to=4-2]
	\arrow[no head, from=3-3, to=1-4]
	\arrow[no head, from=3-3, to=2-5]
	\arrow[no head, from=3-3, to=3-5]
	\arrow[no head, from=3-3, to=4-2]
	\arrow[no head, from=3-3, to=4-4]
	\arrow[no head, from=3-5, to=1-4]
	\arrow["{{{{{{{{\mu_{a}}}}}}}}}", maps to, from=3-6, to=3-7]
	\arrow[no head, from=3-8, to=2-11]
	\arrow[no head, from=3-8, to=3-11]
	\arrow[no head, from=3-8, to=4-10]
	\arrow[no head, from=3-11, to=2-13]
	\arrow[no head, from=3-11, to=3-13]
	\arrow[no head, from=3-11, to=4-12]
	\arrow[no head, from=3-13, to=4-12]
	\arrow[no head, from=4-2, to=2-3]
	\arrow[no head, from=4-4, to=3-5]
	\arrow[no head, from=4-10, to=3-11]
\end{tikzcd}}

It is clear that after mutating at $a$, the condition about the sum of the sizes of the $[a,b]$-quasi-complete subquivers is preserved. That is,  $(r - 1) + (k + 1) = r + k \leq m+1$ holds.

\smallskip


(2) \emph{$Q$ is a quiver of type I and $v=x_i$}: Without loss of generality we can assume that $v=x_1$. We have the following general situation:

      \adjustbox{scale=.7,center}{
 \begin{tikzcd}
	&&&&&&&& \\
	&& {z_2} && {z_{t-1}} &&&&&&  \\
	&& {z_1} && {z_t} &&&& \\
	& {y_1} && \textcolor{rgb,255:red,92;green,92;blue,214}{{x_1}} && {} & {} &&&  \\
	{y_2} && a && {x_2} &&&&  \\
	{y_{k-1}} && b && {x_{r-1}} &&&&  \\
	& {y_k} && {x_r} &&&&\\
	&&&& {}
	\arrow[dashed, no head, from=2-3, to=2-5]
	\arrow[no head, from=2-3, to=3-3]
	\arrow[no head, from=2-3, to=3-5]
	\arrow[no head, from=2-3, to=4-4]
	\arrow[no head, from=2-5, to=3-5]
	\arrow[no head, from=2-5, to=4-4]
	\arrow[no head, from=3-3, to=2-5]
	\arrow[no head, from=3-3, to=3-5]
	\arrow[no head, from=3-3, to=4-4]
	\arrow[no head, from=4-2, to=5-1]
	\arrow[no head, from=4-2, to=5-3]
	\arrow[no head, from=4-2, to=6-1]
	\arrow[no head, from=4-2, to=6-3]
	\arrow[no head, from=4-2, to=7-2]
	\arrow[no head, from=4-4, to=3-5]
	\arrow[no head, from=4-4, to=5-5]
	\arrow[no head, from=4-4, to=7-4]
	\arrow[no head, from=5-1, to=5-3]
	\arrow[dashed, no head, from=5-1, to=6-1]
	\arrow[no head, from=5-1, to=6-3]
	\arrow[no head, from=5-1, to=7-2]
	\arrow[no head, from=5-3, to=4-4]
    \arrow[no head, from=5-3, to=6-5]
	\arrow[no head, from=5-3, to=5-5]
    \arrow[no head, from=5-3, to=7-4]
    \arrow[no head, from=5-5, to=7-4]
	\arrow[dashed, no head, from=5-5, to=6-5]
	\arrow[no head, from=6-1, to=5-3]
	\arrow[no head, from=6-1, to=6-3]
	\arrow[no head, from=6-1, to=7-2]
	\arrow[no head, from=6-3, to=4-4]
	\arrow[no head, from=6-3, to=5-5]
	\arrow[no head, from=6-3, to=6-5]
	\arrow[no head, from=6-3, to=7-2]
	\arrow[no head, from=6-3, to=7-4]
	\arrow[no head, from=6-5, to=4-4]
    \arrow[no head, from=6-5, to=7-4]
	\arrow[no head, from=7-2, to=5-3]
	\end{tikzcd}}

We begin by assuming there is a unique arrow of color 0 starting at $x_1$. This arrow may point to another $x_i$ in the  $[a, b]$-quasi-complete quiver ${}^aQ^b[x_1,\cdots,x_r] $, to the vertex $a$ (or $b$), or to a vertex $z_j$ within the complete subquiver $Q[x_1,z_1, \ldots, z_t]$. We analyse these possibilities as follows: \\

\begin{itemize}

\item[(a)] The arrow of color $0$ points to another $x_i$. In this case, the complete subquiver $Q[x_1, z_1, \dots, z_t]$ must have size $t + 1 \leq m + 1$; otherwise, if its size were $m + 2$, there would be another $0$-coloured arrow $x_1 \to z_i$. Consequently, after mutating at $x_1$, the new complete subquiver $Q'[x_1, z_1, \dots, z_t, x_r]$ has size at most $m+2$. Furthermore, since $r+k \leq m+1$ by hypothesis, it follows that $k + (r-1) \leq m+1$, preserving the required conditions.\\

 \adjustbox{scale=.7,center}{
 \begin{tikzcd}
	&&&&&&&&&&& {z_{t-1}} \\
	&& {z_2} && {z_{t-1}} &&&&&& {z_2} && {z_{t}} \\
	&& {z_1} && {z_t} &&&&&& {z_1} && {x_r} \\
	& {y_1} && \textcolor{rgb,255:red,92;green,92;blue,214}{{x_1}} && {} & {} &&& {y_1} && {x_1} \\
	{y_2} && a && {x_2} &&&& {y_2} && a && {x_2} \\
	{y_{k-1}} && b && {x_{r-1}} &&&& {y_{k-1}} && b && {x_{r-1}} \\
	& {y_k} && {x_r} &&&&&& {y_k} \\
	&&&& {}
	\arrow[no head, from=1-12, to=2-13]
	\arrow[no head, from=1-12, to=4-12]
	\arrow[dashed, no head, from=2-3, to=2-5]
	\arrow[no head, from=2-3, to=3-3]
	\arrow[no head, from=2-3, to=3-5]
	\arrow[no head, from=2-3, to=4-4]
	\arrow[no head, from=2-5, to=3-5]
	\arrow[no head, from=2-5, to=4-4]
	\arrow[dashed, no head, from=2-11, to=1-12]
	\arrow[draw=none, from=2-11, to=2-13]
	\arrow[shift left=3, no head, from=2-11, to=2-13]
	\arrow[no head, from=2-11, to=3-11]
	\arrow[no head, from=2-11, to=3-13]
	\arrow[no head, from=2-11, to=4-12]
	\arrow[no head, from=2-13, to=3-13]
	\arrow[no head, from=2-13, to=4-12]
	\arrow[no head, from=3-3, to=2-5]
	\arrow[no head, from=3-3, to=3-5]
	\arrow[no head, from=3-3, to=4-4]
	\arrow[no head, from=3-11, to=2-13]
	\arrow[no head, from=3-11, to=3-13]
	\arrow[no head, from=4-2, to=5-1]
	\arrow[no head, from=4-2, to=5-3]
	\arrow[no head, from=4-2, to=6-1]
	\arrow[no head, from=4-2, to=6-3]
	\arrow[no head, from=4-2, to=7-2]
	\arrow[no head, from=4-4, to=3-5]
	\arrow[no head, from=4-4, to=5-5]
	\arrow["0", color={rgb,255:red,153;green,92;blue,214}, from=4-4, to=7-4]
	\arrow["{{{{{\mu_{x_1}}}}}}", maps to, from=4-6, to=4-7]
	\arrow[no head, from=4-10, to=5-11]
	\arrow[no head, from=4-10, to=6-9]
	\arrow[no head, from=4-10, to=6-11]
	\arrow[no head, from=4-10, to=7-10]
	\arrow[no head, from=4-12, to=3-11]
	\arrow[no head, from=4-12, to=3-13]
	\arrow[no head, from=4-12, to=5-13]
	\arrow[no head, from=4-12, to=6-11]
	\arrow[no head, from=4-12, to=6-13]
	\arrow[no head, from=5-1, to=5-3]
	\arrow[dashed, no head, from=5-1, to=6-1]
	\arrow[no head, from=5-1, to=6-3]
	\arrow[no head, from=5-1, to=7-2]
	\arrow[no head, from=5-3, to=4-4]
	\arrow[no head, from=5-3, to=5-5]
	\arrow[no head, from=5-3, to=6-5]
	\arrow[no head, from=5-3, to=7-4]
	\arrow[dashed, no head, from=5-5, to=6-5]
	\arrow[no head, from=5-5, to=7-4]
	\arrow[no head, from=5-9, to=4-10]
	\arrow[no head, from=5-9, to=5-11]
	\arrow[dashed, no head, from=5-9, to=6-9]
	\arrow[no head, from=5-9, to=6-11]
	\arrow[no head, from=5-9, to=7-10]
	\arrow[no head, from=5-11, to=4-12]
	\arrow[no head, from=5-11, to=5-13]
	\arrow[no head, from=5-11, to=6-13]
	\arrow[dashed, no head, from=5-13, to=6-13]
	\arrow[no head, from=6-1, to=5-3]
	\arrow[no head, from=6-1, to=6-3]
	\arrow[no head, from=6-1, to=7-2]
	\arrow[no head, from=6-3, to=4-4]
	\arrow[no head, from=6-3, to=5-5]
	\arrow[no head, from=6-3, to=6-5]
	\arrow[no head, from=6-3, to=7-2]
	\arrow[no head, from=6-3, to=7-4]
	\arrow[no head, from=6-5, to=4-4]
	\arrow[no head, from=6-9, to=5-11]
	\arrow[no head, from=6-9, to=6-11]
	\arrow[no head, from=6-9, to=7-10]
	\arrow[no head, from=6-11, to=5-13]
	\arrow[no head, from=6-11, to=6-13]
	\arrow[no head, from=7-2, to=5-3]
	\arrow[no head, from=7-4, to=6-5]
	\arrow[no head, from=7-10, to=6-11]
\end{tikzcd}}

\item [(b)] The arrow of color $0$ points to some vertex $z_i$. Note that this case cannot occur if $r + k = m + 1$. Indeed, by Corollary \ref{1<r+k=m+1 entonces flecha de color 0}, $x_1$ would be required to have an additional $0$-coloured arrow incident to a vertex $x_j$ in the $[a,b]$-quasi-complete subquiver ${}^aQ^b[x_1,\cdots,x_r] $, contradicting our assumption of a unique $0$-coloured arrow issuing from $x_1$. The situation is illustrated below.\\

 \adjustbox{scale=.7,center}{
\begin{tikzcd}
	&& {z_2} && {z_{t-1}} &&&&&& {z_2} && {z_{t-1}} \\
	&& {z_1} && {z_t} &&&&&& {z_1} \\
	& {y_1} && \textcolor{rgb,255:red,92;green,92;blue,214}{{x_1}} && {} & {} &&& {y_1} && {x_1} & {z_t} \\
	{y_2} && a && {x_2} &&&& {y_2} && a && {         x_2} \\
	{y_{k-1}} && b && {x_{r-1}} &&&& {y_{k-1}} && b && {x_{r-1}} \\
	& {y_k} && {x_r} &&&&&& {y_k} && {x_{r}} \\
	&&&& {}
	\arrow[dashed, no head, from=1-3, to=1-5]
	\arrow[no head, from=1-3, to=2-3]
	\arrow[no head, from=1-3, to=2-5]
	\arrow[no head, from=1-3, to=3-4]
	\arrow[no head, from=1-5, to=2-5]
	\arrow[no head, from=1-5, to=3-4]
	\arrow[draw=none, from=1-11, to=1-13]
	\arrow[dashed, no head, from=1-11, to=1-13]
	\arrow[no head, from=1-11, to=2-11]
	\arrow[no head, from=1-11, to=3-12]
	\arrow[no head, from=1-13, to=3-12]
	\arrow[no head, from=2-3, to=1-5]
	\arrow[no head, from=2-3, to=2-5]
	\arrow[no head, from=2-3, to=3-4]
	\arrow[no head, from=2-11, to=1-13]
	\arrow[no head, from=3-2, to=4-1]
	\arrow[no head, from=3-2, to=4-3]
	\arrow[no head, from=3-2, to=5-1]
	\arrow[no head, from=3-2, to=5-3]
	\arrow[no head, from=3-2, to=6-2]
	\arrow["0"', color={rgb,255:red,214;green,92;blue,214}, from=3-4, to=2-5]
	\arrow[no head, from=3-4, to=4-5]
	\arrow[no head, from=3-4, to=6-4]
	\arrow["{{{{{\mu_{x_1}}}}}}", maps to, from=3-6, to=3-7]
	\arrow[no head, from=3-10, to=4-11]
	\arrow[no head, from=3-10, to=5-9]
	\arrow[no head, from=3-10, to=5-11]
	\arrow[no head, from=3-10, to=6-10]
	\arrow[no head, from=3-12, to=2-11]
	\arrow[no head, from=3-12, to=3-13]
	\arrow[no head, from=3-12, to=4-13]
	\arrow[no head, from=3-12, to=5-11]
	\arrow[no head, from=3-12, to=5-13]
	\arrow[no head, from=3-12, to=6-12]
	\arrow[no head, from=3-13, to=4-11]
	\arrow[no head, from=3-13, to=4-13]
	\arrow[no head, from=3-13, to=5-11]
	\arrow[no head, from=3-13, to=6-12]
	\arrow[no head, from=4-1, to=4-3]
	\arrow[dashed, no head, from=4-1, to=5-1]
	\arrow[no head, from=4-1, to=5-3]
	\arrow[no head, from=4-1, to=6-2]
	\arrow[no head, from=4-3, to=3-4]
	\arrow[no head, from=4-3, to=4-5]
	\arrow[no head, from=4-3, to=5-5]
	\arrow[no head, from=4-3, to=6-4]
	\arrow[dashed, no head, from=4-5, to=5-5]
	\arrow[no head, from=4-5, to=6-4]
	\arrow[no head, from=4-9, to=3-10]
	\arrow[no head, from=4-9, to=4-11]
	\arrow[dashed, no head, from=4-9, to=5-9]
	\arrow[no head, from=4-9, to=5-11]
	\arrow[no head, from=4-9, to=6-10]
	\arrow[no head, from=4-11, to=3-12]
	\arrow[no head, from=4-11, to=4-13]
	\arrow[no head, from=4-11, to=5-13]
	\arrow[no head, from=4-11, to=6-12]
	\arrow[dashed, no head, from=4-13, to=5-13]
	\arrow[no head, from=4-13, to=6-12]
	\arrow[no head, from=5-1, to=4-3]
	\arrow[no head, from=5-1, to=5-3]
	\arrow[no head, from=5-1, to=6-2]
	\arrow[no head, from=5-3, to=3-4]
	\arrow[no head, from=5-3, to=4-5]
	\arrow[no head, from=5-3, to=5-5]
	\arrow[no head, from=5-3, to=6-2]
	\arrow[no head, from=5-3, to=6-4]
	\arrow[no head, from=5-5, to=3-4]
	\arrow[no head, from=5-9, to=4-11]
	\arrow[no head, from=5-9, to=5-11]
	\arrow[no head, from=5-9, to=6-10]
	\arrow[no head, from=5-11, to=4-13]
	\arrow[no head, from=5-11, to=5-13]
	\arrow[no head, from=5-11, to=6-12]
	\arrow[no head, from=5-13, to=6-12]
	\arrow[no head, from=6-2, to=4-3]
	\arrow[no head, from=6-4, to=5-5]
	\arrow[no head, from=6-10, to=5-11]
\end{tikzcd}}

Under this configuration, after mutation at vertex $x_1$, the resulting $[a,b]$-quasi-complete subquivers fulfill the required condition  $(r + 1) + k \leq m + 1$, and the complete subquiver $Q'[x_1, z_1, \dots, z_{t-1}]$ has size $t \leq m + 1$.\\

\item[(c)] The unique arrow of color $0$ points to $a$ (or $b$). Here, we must have $t + 1 \leq m + 1$, as otherwise a $0$-coloured arrow $x_1 \to z_i$ would exist for some $i$.

Moreover, if $k \geq 1$, the $r + 1$ colours  $\overline{x_1x_2},\ldots, \overline{x_1x_r}, \overline{x_1a}, \overline{x_1b}$ of the arrows starting at $x_1$ must be distinct due to Proposition \ref{no puedo tener 2 flechas de color cero hacia a y b} and Lemma \ref{colores de los 3 ciclos}. This implies $r + 1 \leq m + 1$, so $r \leq m$. The mutation $\mu_{x_1}$ acts as illustrated in the following figure, transforming the quiver $Q$ of type I into a quiver $Q'$ of type II.\\

 \adjustbox{scale=.65,center}{
\begin{tikzcd}
	&& {z_2} && {z_{t-1}} &&&&&&& {z_2} & {z_{t-1}} \\
	&& {z_1} && {z_t} &&&&&& {z_1} &&& {z_t} \\
	& {y_1} && \textcolor{rgb,255:red,92;green,92;blue,214}{{x_1}} && {} & {} &&& {y_1} \\
	{y_2} && a && {x_2} &&&& {y_2} && a && {x_1} & {         x_2} \\
	{y_{k-1}} && b && {x_{r-1}} &&&& {y_{k-1}} && b &&& {x_{r-1}} \\
	& {y_k} && {x_r} &&&&&& {y_k} && {x_{r}} \\
	&&&& {}
	\arrow[dashed, no head, from=1-3, to=1-5]
	\arrow[no head, from=1-3, to=2-3]
	\arrow[no head, from=1-3, to=2-5]
	\arrow[no head, from=1-3, to=3-4]
	\arrow[no head, from=1-5, to=2-5]
	\arrow[no head, from=1-5, to=3-4]
	\arrow[draw=none, from=1-12, to=1-13]
	\arrow[dashed, no head, from=1-12, to=1-13]
	\arrow[no head, from=1-12, to=2-11]
	\arrow[no head, from=1-12, to=2-14]
	\arrow[no head, from=1-12, to=4-11]
	\arrow[no head, from=1-12, to=4-13]
	\arrow[no head, from=1-13, to=2-14]
	\arrow[no head, from=1-13, to=4-11]
	\arrow[no head, from=1-13, to=4-13]
	\arrow[no head, from=2-3, to=1-5]
	\arrow[no head, from=2-3, to=2-5]
	\arrow[no head, from=2-3, to=3-4]
	\arrow[no head, from=2-11, to=1-13]
	\arrow[no head, from=2-11, to=2-14]
	\arrow[no head, from=2-11, to=4-11]
	\arrow[no head, from=2-14, to=4-11]
	\arrow[no head, from=3-2, to=4-1]
	\arrow[no head, from=3-2, to=4-3]
	\arrow[no head, from=3-2, to=5-1]
	\arrow[no head, from=3-2, to=5-3]
	\arrow[no head, from=3-2, to=6-2]
	\arrow[no head, from=3-4, to=2-5]
	\arrow["0"', color={rgb,255:red,214;green,92;blue,214}, from=3-4, to=4-3]
	\arrow[no head, from=3-4, to=4-5]
	\arrow[no head, from=3-4, to=6-4]
	\arrow["{{{{{\mu_{x_1}}}}}}", maps to, from=3-6, to=3-7]
	\arrow[no head, from=3-10, to=4-11]
	\arrow[no head, from=3-10, to=5-9]
	\arrow[no head, from=3-10, to=5-11]
	\arrow[no head, from=3-10, to=6-10]
	\arrow[no head, from=4-1, to=4-3]
	\arrow[dashed, no head, from=4-1, to=5-1]
	\arrow[no head, from=4-1, to=5-3]
	\arrow[no head, from=4-1, to=6-2]
	\arrow[no head, from=4-3, to=4-5]
	\arrow[no head, from=4-3, to=5-5]
	\arrow[no head, from=4-3, to=6-4]
	\arrow[dashed, no head, from=4-5, to=5-5]
	\arrow[no head, from=4-5, to=6-4]
	\arrow[no head, from=4-9, to=3-10]
	\arrow[no head, from=4-9, to=4-11]
	\arrow[dashed, no head, from=4-9, to=5-9]
	\arrow[no head, from=4-9, to=5-11]
	\arrow[no head, from=4-9, to=6-10]
	\arrow[no head, from=4-11, to=4-13]
	\arrow[no head, from=4-11, to=5-11]
	\arrow[no head, from=4-13, to=2-11]
	\arrow[no head, from=4-13, to=2-14]
	\arrow[no head, from=4-13, to=4-14]
	\arrow[no head, from=4-13, to=5-11]
	\arrow[no head, from=4-13, to=5-14]
	\arrow[no head, from=4-13, to=6-12]
	\arrow[dashed, no head, from=4-14, to=5-14]
	\arrow[no head, from=4-14, to=6-12]
	\arrow[no head, from=5-1, to=4-3]
	\arrow[no head, from=5-1, to=5-3]
	\arrow[no head, from=5-1, to=6-2]
	\arrow[no head, from=5-3, to=3-4]
	\arrow[no head, from=5-3, to=4-5]
	\arrow[no head, from=5-3, to=5-5]
	\arrow[no head, from=5-3, to=6-2]
	\arrow[no head, from=5-3, to=6-4]
	\arrow[no head, from=5-5, to=3-4]
	\arrow[no head, from=5-9, to=4-11]
	\arrow[no head, from=5-9, to=5-11]
	\arrow[no head, from=5-9, to=6-10]
	\arrow[no head, from=5-11, to=4-14]
	\arrow[no head, from=5-11, to=5-14]
	\arrow[no head, from=5-11, to=6-12]
	\arrow[draw=none, from=5-14, to=6-12]
	\arrow[no head, from=6-2, to=4-3]
	\arrow[no head, from=6-4, to=5-5]
	\arrow[no head, from=6-10, to=5-11]
	\arrow[no head, from=6-12, to=5-14]
\end{tikzcd}}

Since $r \leq m$ and $t \leq m$, the new complete subquivers $Q'[z_1, \dots, z_t, x_1, a]$, $Q'[x_1, \dots, x_r, b]$, and $Q'[a, b, y_1, \dots, y_k]$ have sizes $t+2 \leq m+2$, $r+1 \leq m+1$, and $k+2 \leq m+2$, respectively.\\

If $k = 0$, we have the following situation with $r \leq m + 1$ and $t\leq m$.\\

\adjustbox{scale=.7,center}{
\begin{tikzcd}
	{z_2} && {z_{t-1}} &&&&&& {x_3} & {x_{r-1}} & \\
	{z_1} && {z_t} &&&&& {x_2} &&& {x_r} \\
	& \textcolor{rgb,255:red,92;green,92;blue,214}{{{{{{x_1}}}}}} &&& {} & {} &&& b & {x_1} \\
	a && {x_2} &&&&&& a && {z_1} \\
	b && {x_{r-1}} &&&&&&& {z_t} & {         z_2} \\
	& {x_r} \\
	&& {}
	\arrow[dashed, no head, from=1-1, to=1-3]
	\arrow[no head, from=1-1, to=2-1]
	\arrow[no head, from=1-1, to=2-3]
	\arrow[no head, from=1-1, to=3-2]
	\arrow[no head, from=1-3, to=2-3]
	\arrow[no head, from=1-3, to=3-2]
	\arrow[dashed, no head, from=1-9, to=1-10]
	\arrow[no head, from=1-9, to=2-8]
	\arrow[draw=none, from=1-9, to=2-11]
	\arrow[no head, from=1-9, to=2-11]
	\arrow[no head, from=1-9, to=3-9]
	\arrow[no head, from=1-9, to=3-10]
	\arrow[draw=none, from=1-9, to=4-9]
	\arrow[no head, from=1-10, to=2-8]
	\arrow[no head, from=1-10, to=2-11]
	\arrow[no head, from=1-10, to=3-9]
	\arrow[no head, from=1-10, to=3-10]
	\arrow[no head, from=2-1, to=1-3]
	\arrow[no head, from=2-1, to=2-3]
	\arrow[no head, from=2-1, to=3-2]
	\arrow[no head, from=2-8, to=2-11]
	\arrow[no head, from=2-8, to=3-9]
	\arrow[no head, from=2-11, to=3-10]
	\arrow["{c_t}"', from=3-2, to=2-3]
	\arrow["0"', color={rgb,255:red,214;green,92;blue,214}, from=3-2, to=4-1]
	\arrow[no head, from=3-2, to=4-3]
	\arrow["c"{description, pos=0.3}, from=3-2, to=5-1]
	\arrow[no head, from=3-2, to=6-2]
	\arrow["{{{{{{{{{\mu_{x_1}}}}}}}}}}", maps to, from=3-5, to=3-6]
	\arrow[no head, from=3-9, to=2-11]
	\arrow["{\small{m-c}}"', from=3-9, to=4-9]
	\arrow[no head, from=3-10, to=2-8]
	\arrow["{\tiny{c-1}}", from=3-10, to=3-9]
	\arrow[no head, from=3-10, to=4-11]
	\arrow["{\tiny{c_t-1}}"{description, pos=0.6}, from=3-10, to=5-10]
	\arrow[no head, from=3-10, to=5-11]
	\arrow[no head, from=4-1, to=4-3]
	\arrow[no head, from=4-1, to=5-3]
	\arrow[no head, from=4-1, to=6-2]
	\arrow[dashed, no head, from=4-3, to=5-3]
	\arrow[no head, from=4-3, to=6-2]
	\arrow["{\small{0}}"', from=4-9, to=3-10]
	\arrow[no head, from=4-9, to=4-11]
	\arrow[no head, from=4-9, to=5-11]
	\arrow[no head, from=4-11, to=5-10]
	\arrow[no head, from=4-11, to=5-11]
	\arrow[no head, from=5-1, to=4-3]
	\arrow[no head, from=5-1, to=5-3]
	\arrow[no head, from=5-1, to=6-2]
	\arrow[no head, from=5-3, to=3-2]
	\arrow["{\tiny{m-c_t}}", from=5-10, to=4-9]
	\arrow[dashed, no head, from=5-11, to=5-10]
	\arrow[no head, from=6-2, to=5-3]
\end{tikzcd}}

If $r = m + 1$, there would have to be an additional $0$-coloured arrow from $x_1$ to $b$ since the $r$ colours  $\overline{x_1x_2},\ldots, \overline{x_1x_r}, \overline{x_1a}$ of the arrows starting at $x_1$ must be distinct due to  Proposition \ref{colores de los 3 ciclos}. In particular, none of the colours $\overline{x_1x_i}$ is $0$. Since the same holds for the $r$ colours  $\overline{x_1x_2},\ldots, \overline{x_1x_r}, \overline{x_1b}$, it follows that the arrow $x_1 \rightarrow b$ must be the arrow of colour $0$. However, as we are considering the case of a unique $0$-coloured arrow starting at $x_1$, it follows that $r \leq m$.

Consequently, the new complete subquiver $Q'[a, x_1, z_1, \dots, z_t]$ has size $t + 2 \leq m + 2$. Since $r \leq m$, the new complete subquiver $Q'[b, x_1, \dots, x_r]$ has size at most $m + 1$. A check of the induced cycle $(ax_1z_t)$ shows that condition (6) holds, with all other cycles following the same pattern. Thus, $Q'$ is indeed of type II.\\

\end{itemize}

Moving forward, suppose that there are two arrows of colour $0$ emanating from $x_1$. According to Proposition \ref{colores de los 3 ciclos}, these two arrows may be directed toward an $x_i$ and a $z_j$, toward some $x_i$ and either $a$ or $b$, or toward both $a$ and $b$ simultaneously. We explore these possibilities in the following discussion.

\begin{itemize}

\item[(a)]

 There are two arrows $x_1\rightarrow x_r$ and $x_1\rightarrow z_t$ of colour $0$. The mutation $\mu_{x_1}$, in this case,
acts as illustrated in the following figure:\\

\adjustbox{scale=.7,center}{
\begin{tikzcd}
	&& {z_2} && {z_{t-1}} &&&&&& {z_2} && {z_{t-1}} \\
	&& {z_1} && \textcolor{rgb,255:red,153;green,92;blue,214}{{z_t}} &&&&&& {z_1} && \textcolor{rgb,255:red,153;green,92;blue,214}{{x_{r}}} \\
	& {y_1} && \textcolor{rgb,255:red,92;green,92;blue,214}{{{x_1}}} && {} & {} &&& {y_1} && {x_1} \\
	{y_2} && a && {x_2} &&&& {y_2} && a && {x_2} \\
	{y_{k-1}} && b && {x_{r-1}} &&&& {y_{k-1}} && b && {x_{r-1}} \\
	& {y_k} && \textcolor{rgb,255:red,153;green,92;blue,214}{{x_r}} &&&&&& {y_k} && \textcolor{rgb,255:red,153;green,92;blue,214}{{z_t}} \\
	&&&& {}
	\arrow[dashed, no head, from=1-3, to=1-5]
	\arrow[no head, from=1-3, to=2-3]
	\arrow[no head, from=1-3, to=2-5]
	\arrow[no head, from=1-3, to=3-4]
	\arrow[no head, from=1-5, to=2-5]
	\arrow[no head, from=1-5, to=3-4]
	\arrow[dashed, no head, from=1-11, to=1-13]
	\arrow[no head, from=1-11, to=2-11]
	\arrow[draw=none, from=1-11, to=2-13]
	\arrow[no head, from=1-11, to=2-13]
	\arrow[no head, from=1-11, to=3-12]
	\arrow[no head, from=1-13, to=2-13]
	\arrow[no head, from=1-13, to=3-12]
	\arrow[no head, from=2-3, to=1-5]
	\arrow[no head, from=2-3, to=2-5]
	\arrow[no head, from=2-3, to=3-4]
	\arrow[no head, from=2-11, to=2-13]
	\arrow[no head, from=2-13, to=3-12]
	\arrow[no head, from=3-2, to=4-1]
	\arrow[no head, from=3-2, to=4-3]
	\arrow[no head, from=3-2, to=5-1]
	\arrow[no head, from=3-2, to=5-3]
	\arrow[no head, from=3-2, to=6-2]
	\arrow["0"', color={rgb,255:red,153;green,92;blue,214}, from=3-4, to=2-5]
	\arrow[no head, from=3-4, to=4-5]
	\arrow["0"{pos=0.4}, color={rgb,255:red,153;green,92;blue,214}, from=3-4, to=6-4]
	\arrow["{{{{{{\mu_{x_1}}}}}}}", maps to, from=3-6, to=3-7]
	\arrow[no head, from=3-10, to=4-11]
	\arrow[no head, from=3-10, to=5-9]
	\arrow[no head, from=3-10, to=5-11]
	\arrow[no head, from=3-10, to=6-10]
	\arrow[no head, from=3-12, to=2-11]
	\arrow[no head, from=3-12, to=4-13]
	\arrow[no head, from=3-12, to=5-11]
	\arrow[no head, from=3-12, to=5-13]
	\arrow[no head, from=3-12, to=6-12]
	\arrow[no head, from=4-1, to=4-3]
	\arrow[dashed, no head, from=4-1, to=5-1]
	\arrow[no head, from=4-1, to=5-3]
	\arrow[no head, from=4-1, to=6-2]
	\arrow[no head, from=4-3, to=3-4]
	\arrow[no head, from=4-3, to=4-5]
	\arrow[no head, from=4-3, to=5-5]
	\arrow[no head, from=4-3, to=6-4]
	\arrow[dashed, no head, from=4-5, to=5-5]
	\arrow[no head, from=4-5, to=6-4]
	\arrow[no head, from=4-9, to=3-10]
	\arrow[no head, from=4-9, to=4-11]
	\arrow[dashed, no head, from=4-9, to=5-9]
	\arrow[no head, from=4-9, to=5-11]
	\arrow[no head, from=4-9, to=6-10]
	\arrow[no head, from=4-11, to=3-12]
	\arrow[no head, from=4-11, to=4-13]
	\arrow[no head, from=4-11, to=5-13]
	\arrow[no head, from=4-11, to=6-12]
	\arrow[dashed, no head, from=4-13, to=5-13]
	\arrow[no head, from=4-13, to=6-12]
	\arrow[no head, from=5-1, to=4-3]
	\arrow[no head, from=5-1, to=5-3]
	\arrow[no head, from=5-1, to=6-2]
	\arrow[no head, from=5-3, to=3-4]
	\arrow[no head, from=5-3, to=4-5]
	\arrow[no head, from=5-3, to=5-5]
	\arrow[no head, from=5-3, to=6-2]
	\arrow[no head, from=5-3, to=6-4]
	\arrow[no head, from=5-5, to=3-4]
	\arrow[no head, from=5-9, to=4-11]
	\arrow[no head, from=5-9, to=5-11]
	\arrow[no head, from=5-9, to=6-10]
	\arrow[no head, from=5-11, to=4-13]
	\arrow[no head, from=5-11, to=5-13]
	\arrow[no head, from=5-11, to=6-12]
	\arrow[no head, from=5-13, to=6-12]
	\arrow[no head, from=6-2, to=4-3]
	\arrow[no head, from=6-4, to=5-5]
	\arrow[no head, from=6-10, to=5-11]
\end{tikzcd}}

Note that, after mutating at $x_1$, the vertices $x_r$ and $z_t$ exchange their positions, implying
that the sizes of the complete and quasi-complete subquivers involved remain unchanged.

\item[(b)] There are two arrows $x_1\rightarrow a$ and
 $x_1\rightarrow z_t$ of colour $0$. The mutation
$\mu_{x_1}$, in this case, acts as illustrated in the
following figure by transforming the type I quiver $Q$ into the type II quiver $Q'$:\\

\adjustbox{scale=.65,center}{
\begin{tikzcd}
	&& {z_2} && {z_{t-1}} &&&&&&& {z_2} && \\
	&& {z_1} && {z_t} &&&&&& {z_1} && {z_{t-1}} \\
	& {y_1} && \textcolor{rgb,255:red,92;green,92;blue,214}{{{{x_1}}}} && {} & {} &&& {y_1} \\
	{y_2} && a && {x_2} &&&& {y_2} && a && {x_1} & {z_t} \\
	{y_{k-1}} && b && {x_{r-1}} &&&& {y_{k-1}} && b &&& {         x_2} \\
	& {y_k} && {x_r} &&&&&& {y_k} && {x_{r}} & {x_{r-1}} \\
	&&&& {}
	\arrow[dashed, no head, from=1-3, to=1-5]
	\arrow[no head, from=1-3, to=2-3]
	\arrow[no head, from=1-3, to=2-5]
	\arrow[no head, from=1-3, to=3-4]
	\arrow[no head, from=1-5, to=2-5]
	\arrow[no head, from=1-5, to=3-4]
	\arrow[no head, from=1-12, to=2-11]
	\arrow[draw=none, from=1-12, to=2-13]
	\arrow[dashed, no head, from=1-12, to=2-13]
	\arrow[no head, from=1-12, to=4-11]
	\arrow[no head, from=1-12, to=4-13]
	\arrow[no head, from=2-3, to=1-5]
	\arrow[no head, from=2-3, to=2-5]
	\arrow[no head, from=2-3, to=3-4]
	\arrow[no head, from=2-11, to=2-13]
	\arrow[no head, from=2-11, to=4-11]
	\arrow[no head, from=2-13, to=4-11]
	\arrow[no head, from=2-13, to=4-13]
	\arrow[no head, from=3-2, to=4-1]
	\arrow[no head, from=3-2, to=4-3]
	\arrow[no head, from=3-2, to=5-1]
	\arrow[no head, from=3-2, to=5-3]
	\arrow[no head, from=3-2, to=6-2]
	\arrow["0"', color={rgb,255:red,214;green,92;blue,214}, from=3-4, to=2-5]
	\arrow["0"', color={rgb,255:red,214;green,92;blue,214}, from=3-4, to=4-3]
	\arrow[no head, from=3-4, to=4-5]
	\arrow["c"{description}, shift left=2, from=3-4, to=5-3]
	\arrow[no head, from=3-4, to=6-4]
	\arrow["{{{{{{{\mu_{x_1}}}}}}}}", maps to, from=3-6, to=3-7]
	\arrow[no head, from=3-10, to=4-11]
	\arrow[no head, from=3-10, to=5-9]
	\arrow[no head, from=3-10, to=5-11]
	\arrow[no head, from=3-10, to=6-10]
	\arrow[no head, from=4-1, to=4-3]
	\arrow[dashed, no head, from=4-1, to=5-1]
	\arrow[no head, from=4-1, to=5-3]
	\arrow[no head, from=4-1, to=6-2]
	\arrow[no head, from=4-3, to=4-5]
	\arrow[no head, from=4-3, to=5-5]
	\arrow[no head, from=4-3, to=6-4]
	\arrow[dashed, no head, from=4-5, to=5-5]
	\arrow[no head, from=4-5, to=6-4]
	\arrow[no head, from=4-9, to=3-10]
	\arrow[no head, from=4-9, to=4-11]
	\arrow[dashed, no head, from=4-9, to=5-9]
	\arrow[no head, from=4-9, to=5-11]
	\arrow[no head, from=4-9, to=6-10]
	\arrow["0"{description}, from=4-11, to=4-13]
	\arrow[no head, from=4-13, to=2-11]
	\arrow["{\small{c-1}}"{description}, from=4-13, to=5-11]
	\arrow[no head, from=4-13, to=5-14]
	\arrow[no head, from=4-13, to=6-12]
	\arrow[no head, from=4-13, to=6-13]
	\arrow["0"', from=4-14, to=4-13]
	\arrow[no head, from=4-14, to=5-11]
	\arrow[no head, from=4-14, to=5-14]
	\arrow[no head, from=4-14, to=6-12]
	\arrow[no head, from=4-14, to=6-13]
	\arrow[no head, from=5-1, to=4-3]
	\arrow[no head, from=5-1, to=5-3]
	\arrow[no head, from=5-1, to=6-2]
	\arrow[no head, from=5-3, to=4-5]
	\arrow[no head, from=5-3, to=5-5]
	\arrow[no head, from=5-3, to=6-2]
	\arrow[no head, from=5-3, to=6-4]
	\arrow[no head, from=5-5, to=3-4]
	\arrow[no head, from=5-9, to=4-11]
	\arrow[no head, from=5-9, to=5-11]
	\arrow[no head, from=5-9, to=6-10]
	\arrow["{\small{m-c}}"{description}, from=5-11, to=4-11]
	\arrow[no head, from=5-11, to=5-14]
	\arrow[no head, from=5-11, to=6-12]
	\arrow[no head, from=5-11, to=6-13]
	\arrow[no head, from=5-14, to=6-12]
	\arrow[dashed, no head, from=5-14, to=6-13]
	\arrow[no head, from=6-2, to=4-3]
	\arrow[no head, from=6-4, to=5-5]
	\arrow[no head, from=6-10, to=5-11]
	\arrow[no head, from=6-12, to=6-13]
	\arrow[draw=none, from=6-13, to=6-12]
\end{tikzcd}}

It is evident that the central cycle $(ax_1b)$ in $Q'$ possesses coloration  $\kappa(ax_1b)=0 + (c-1) + (m-c)= m-1$. Given that $r+k \leq m+1$, it follows that if $k \geq 1$, then $r\leq m$; consequently, the size of the new complete subquiver $Q'[b, x_1, \cdots, x_r, z_t]$ is bounded by $m+2$. Analogously, since $r \geq 1$, it follows that $k \leq m$; consequently, the size of the new complete subquiver $Q'[a,b, y_1, \dots, y_k]$ is also bounded by $m+2$. Moreover, the size of the complete $Q'[a, x_1, z_1, \dots, z_{t-1}]$ remains identical to that of $Q[x_1, z_1, \dots, z_t]$. \\

 \item [(c)] There are two arrows $x_1\rightarrow a$ and $x_1\rightarrow b$ of colour $0$. This is only possible if $k = 0$, implying $r \leq m + 1$. Note that $t$ cannot be $m + 1$, as this would necessitate a third arrow of colour $0$ toward a vertex $z_i$. Given our assumption of only two such arrows, we conclude that $t \leq m$. The mutation $\mu_{x_1}$ acts as illustrated in the
following figure:\\

\adjustbox{scale=.7,center}{
\begin{tikzcd}
	{z_2} && {z_{t-1}} &&&&&& {x_3} && {x_{r-1}} \\
	{z_1} && {z_t} &&&&&& {x_2} && {x_r} \\
	& \textcolor{rgb,255:red,92;green,92;blue,214}{{{x_1}}} &&& {} & {} &&&& {x_1} \\
	a && {x_2} &&&&&& a && {z_1} \\
	b && {x_{r-1}} &&&&&& b && {         z_2} \\
	& {x_r} &&&&&&&& {z_t} \\
	&& {}
	\arrow[dashed, no head, from=1-1, to=1-3]
	\arrow[no head, from=1-1, to=2-1]
	\arrow[no head, from=1-1, to=2-3]
	\arrow[no head, from=1-1, to=3-2]
	\arrow[no head, from=1-3, to=2-3]
	\arrow[no head, from=1-3, to=3-2]
	\arrow[dashed, no head, from=1-9, to=1-11]
	\arrow[no head, from=1-9, to=2-9]
	\arrow[draw=none, from=1-9, to=2-11]
	\arrow[no head, from=1-9, to=2-11]
	\arrow[no head, from=1-9, to=3-10]
	\arrow[draw=none, from=1-9, to=4-9]
	\arrow[no head, from=1-11, to=2-9]
	\arrow[no head, from=1-11, to=2-11]
	\arrow[no head, from=1-11, to=3-10]
	\arrow[no head, from=2-1, to=1-3]
	\arrow[no head, from=2-1, to=2-3]
	\arrow[no head, from=2-1, to=3-2]
	\arrow[no head, from=2-9, to=2-11]
	\arrow[no head, from=2-11, to=3-10]
	\arrow[from=3-2, to=2-3]
	\arrow["0"', color={rgb,255:red,214;green,92;blue,214}, from=3-2, to=4-1]
	\arrow[no head, from=3-2, to=4-3]
	\arrow["0"{pos=0.3}, color={rgb,255:red,214;green,92;blue,214}, from=3-2, to=5-1]
	\arrow[no head, from=3-2, to=6-2]
	\arrow["{{{{{{\mu_{x_1}}}}}}}", maps to, from=3-5, to=3-6]
	\arrow[no head, from=3-10, to=2-9]
	\arrow[no head, from=3-10, to=4-11]
	\arrow[no head, from=3-10, to=5-9]
	\arrow[no head, from=3-10, to=5-11]
	\arrow[no head, from=3-10, to=6-10]
	\arrow[no head, from=4-1, to=4-3]
	\arrow[no head, from=4-1, to=5-3]
	\arrow[no head, from=4-1, to=6-2]
	\arrow[dashed, no head, from=4-3, to=5-3]
	\arrow[no head, from=4-3, to=6-2]
	\arrow[no head, from=4-9, to=3-10]
	\arrow[no head, from=4-9, to=4-11]
	\arrow[no head, from=4-9, to=5-11]
	\arrow[no head, from=4-11, to=5-9]
	\arrow[no head, from=4-11, to=5-11]
	\arrow[no head, from=4-11, to=6-10]
	\arrow[no head, from=5-1, to=4-3]
	\arrow[no head, from=5-1, to=5-3]
	\arrow[no head, from=5-1, to=6-2]
	\arrow[no head, from=5-3, to=3-2]
	\arrow[no head, from=5-9, to=5-11]
	\arrow[no head, from=5-9, to=6-10]
	\arrow[dashed, no head, from=5-11, to=6-10]
	\arrow[no head, from=6-2, to=5-3]
\end{tikzcd}}

It follows that the size of the new complete subquiver $Q'[x_1, \dots, x_r]$ is at most $m+2$, while the new $[a,b]$-quasi-complete ${}^aQ'^b[x_1, z_1,\cdots,z_t]$ reaches a size of no more than $m+3$.\\

\end{itemize}

Lastly, we address the possibility of three arrows  of colour $0$ emanating from $x_1$. Such a scenario arises only when $k=0$ and $t, r \leq m+1$. The  resulting action of the mutation $\mu_{x_1}$ is depicted in the following figure.

\adjustbox{scale=.7,center}{
\begin{tikzcd}
	{z_2} && {z_{t-1}} &&&&&& {x_2} && {x_{r-1}} \\
	\textcolor{rgb,255:red,92;green,92;blue,214}{{z_1}} && {z_t} &&&&&& \textcolor{rgb,255:red,92;green,92;blue,214}{{z_1}} && {x_r} \\
	& \textcolor{rgb,255:red,92;green,92;blue,214}{{{{x_1}}}} &&& {} & {} &&&& \textcolor{rgb,255:red,92;green,92;blue,214}{{x_1}} \\
	a && {x_2} &&&&&& a && {z_2} \\
	b && {x_{r-1}} &&&&&& b && {         z_3} \\
	& {x_r} &&&&&&&& {z_t} \\
	&& {}
	\arrow[dashed, no head, from=1-1, to=1-3]
	\arrow[no head, from=1-1, to=2-1]
	\arrow[no head, from=1-1, to=2-3]
	\arrow[no head, from=1-1, to=3-2]
	\arrow[no head, from=1-3, to=2-3]
	\arrow[no head, from=1-3, to=3-2]
	\arrow[dashed, no head, from=1-9, to=1-11]
	\arrow[no head, from=1-9, to=2-9]
	\arrow[draw=none, from=1-9, to=2-11]
	\arrow[no head, from=1-9, to=2-11]
	\arrow[no head, from=1-9, to=3-10]
	\arrow[draw=none, from=1-9, to=4-9]
	\arrow[no head, from=1-11, to=2-9]
	\arrow[no head, from=1-11, to=2-11]
	\arrow[no head, from=1-11, to=3-10]
	\arrow[no head, from=2-1, to=1-3]
	\arrow[no head, from=2-1, to=2-3]
	\arrow[no head, from=2-9, to=2-11]
	\arrow[no head, from=2-11, to=3-10]
	\arrow["0", color={rgb,255:red,214;green,92;blue,214}, from=3-2, to=2-1]
	\arrow[from=3-2, to=2-3]
	\arrow["0"', color={rgb,255:red,214;green,92;blue,214}, from=3-2, to=4-1]
	\arrow[no head, from=3-2, to=4-3]
	\arrow["0"{pos=0.3}, color={rgb,255:red,214;green,92;blue,214}, from=3-2, to=5-1]
	\arrow[no head, from=3-2, to=6-2]
	\arrow["{{{{{{{\mu_{x_1}}}}}}}}", maps to, from=3-5, to=3-6]
	\arrow[no head, from=3-10, to=2-9]
	\arrow[no head, from=3-10, to=4-11]
	\arrow[no head, from=3-10, to=5-9]
	\arrow[no head, from=3-10, to=5-11]
	\arrow[no head, from=3-10, to=6-10]
	\arrow[no head, from=4-1, to=4-3]
	\arrow[no head, from=4-1, to=5-3]
	\arrow[no head, from=4-1, to=6-2]
	\arrow[dashed, no head, from=4-3, to=5-3]
	\arrow[no head, from=4-3, to=6-2]
	\arrow[no head, from=4-9, to=3-10]
	\arrow[no head, from=4-9, to=4-11]
	\arrow[no head, from=4-9, to=5-11]
	\arrow[no head, from=4-11, to=5-9]
	\arrow[no head, from=4-11, to=5-11]
	\arrow[no head, from=4-11, to=6-10]
	\arrow[no head, from=5-1, to=4-3]
	\arrow[no head, from=5-1, to=5-3]
	\arrow[no head, from=5-1, to=6-2]
	\arrow[no head, from=5-3, to=3-2]
	\arrow[no head, from=5-9, to=5-11]
	\arrow[no head, from=5-9, to=6-10]
	\arrow[dashed, no head, from=5-11, to=6-10]
	\arrow[no head, from=6-2, to=5-3]
\end{tikzcd}}

 It follows that the size of the new complete subquiver $Q'[x_1, \dots, x_r, z_1]$ is at most $m+2$, while the new $[a,b]$-quasi-complete ${}^aQ'^b[x_1, z_2,\cdots,z_t]$ reaches a size of no more than $m+3$.\\


(3) \emph{$Q$ is a quiver of type II and $v=v_r$ a vertex such that there is an arrow $v_r\rightarrow x_1$, for some vertex $x_1$ in the central cycle}: Assume that the weight of the central cycle $(x_1x_2\cdots x_k)$ is $m-1$. In this case, the mutation $\mu_{v_r}$ acts as illustrated in the following figure:

    \adjustbox{scale=.7,center}{
    \begin{tikzcd}
	& \textcolor{rgb,255:red,92;green,92;blue,214}{{v_r}} && {{z_1}} &&&&& \textcolor{rgb,255:red,92;green,92;blue,214}{{v_r}} && \textcolor{rgb,255:red,92;green,92;blue,214}{{{z_1}}} & \\
	{v_1} && {{{{{x_1}}}}} && {z_t} &&& {v_1} && {{x_1}} && {z_t} \\
	& {x_2} && {x_k} && {} & {} && {x_2} && {x_k} \\
	& {x_3} && {x_{k-1}} &&&&& {x_3} && {        x_{k-1}} \\
	&& {} &&&&&&& {} \\
	&&& {}
	\arrow["{\small{0}}", color={rgb,255:red,214;green,92;blue,214}, from=1-2, to=2-3]
	\arrow[dashed, no head, from=1-4, to=2-5]
	\arrow[no head, from=1-4, to=3-4]
	\arrow[dashed, no head, from=1-9, to=2-8]
	\arrow["{\small{c}}"', from=1-9, to=3-9]
	\arrow[dashed, no head, from=1-11, to=2-12]
	\arrow[no head, from=1-11, to=3-11]
	\arrow[dashed, no head, from=2-1, to=1-2]
	\arrow[no head, from=2-1, to=2-3]
	\arrow[from=2-3, to=1-4]
	\arrow[no head, from=2-3, to=2-5]
	\arrow["c", from=2-3, to=3-2]
	\arrow[no head, from=2-5, to=3-4]
	\arrow[no head, from=2-8, to=3-9]
	\arrow["{\small{0}}"', from=2-10, to=1-9]
	\arrow[from=2-10, to=1-11]
	\arrow[no head, from=2-10, to=2-12]
	\arrow[no head, from=2-12, to=3-11]
	\arrow["{\small{m-1-c}}"{description, pos=0.7}, from=3-2, to=1-2]
	\arrow[no head, from=3-2, to=2-1]
	\arrow[from=3-2, to=4-2]
	\arrow[from=3-4, to=2-3]
	\arrow["{{{{{{{{{{\mu_{v_r}}}}}}}}}}}", maps to, from=3-6, to=3-7]
	\arrow[from=3-9, to=4-9]
	\arrow[draw=none, from=3-11, to=2-10]
	\arrow[from=3-11, to=2-10]
	\arrow[dashed, from=4-2, to=5-3]
	\arrow[from=4-4, to=3-4]
	\arrow[dashed, from=4-9, to=5-10]
	\arrow[from=4-11, to=3-11]
	\arrow[from=5-3, to=4-4]
	\arrow[from=5-10, to=4-11]
\end{tikzcd}}

The mutation $\mu_{v_r}$ transforms the central cycle $(x_1 \ldots x_k)$ into the  bigger central cycle $(x_1 v_r x_2\ldots x_k)$ in $Q^\prime$ also with weight $m-1$.  Clearly, the complete subquiver $Q[x_1,x_2,v_1, \cdots, v_r]$ changes to a smaller complete $Q'[x_2,v_1, \cdots, v_r]$ and the complete subquiver $Q[x_1,z_1, \cdots, z_t, x_k]$ remains unchanged. \\

(4) \emph{$Q$ is a quiver of type II and $v=x_1$ a vertex in the central cycle}:  We once again face two scenarios, depending on whether $x_1$ has one or two outgoing arrows of colour $0$. Starting with the former, suppose there is a single colour $0$ arrow  departing from $x_1$.  When the arrow is a central one and $k > 3$ the following situation arises:\\

\adjustbox{scale=.7,center}{
\begin{tikzcd}
	& {v_r} && {z_1} &&&&& {v_r} && {z_1} \\
	{v_1} && \textcolor{rgb,255:red,92;green,92;blue,214}{{{{{x_1}}}}} && {z_t} &&&& {v_1} & {x_1} && {z_t} \\
	& {x_2} && {x_k} && {} & {} &&& {x_2} && {x_k} \\
	& {x_3} && {x_{k-1}} &&&&&& {x_3} && {        x_{k-1}} \\
	&& {} &&&&&&&& {} \\
	&&& {}
	\arrow[no head, from=1-2, to=2-3]
	\arrow[no head, from=1-2, to=3-2]
	\arrow[no head, from=1-4, to=2-3]
	\arrow[dashed, no head, from=1-4, to=2-5]
	\arrow[no head, from=1-4, to=3-4]
	\arrow[dashed, no head, from=1-9, to=2-9]
	\arrow[no head, from=1-9, to=2-10]
	\arrow[no head, from=1-11, to=2-10]
	\arrow[dashed, no head, from=1-11, to=2-12]
	\arrow[no head, from=1-11, to=3-10]
	\arrow[no head, from=1-11, to=3-12]
	\arrow[dashed, no head, from=2-1, to=1-2]
	\arrow[no head, from=2-1, to=2-3]
	\arrow[no head, from=2-3, to=2-5]
	\arrow["0"', color={rgb,255:red,214;green,92;blue,214}, from=2-3, to=3-2]
	\arrow[no head, from=2-5, to=3-4]
	\arrow[no head, from=2-9, to=2-10]
	\arrow[no head, from=2-10, to=2-12]
	\arrow[no head, from=2-10, to=3-10]
	\arrow[no head, from=2-12, to=3-12]
	\arrow[no head, from=3-2, to=2-1]
	\arrow[from=3-2, to=4-2]
	\arrow["c"', from=3-4, to=2-3]
	\arrow["{{{{{{{{\mu_{x_1}}}}}}}}}", maps to, from=3-6, to=3-7]
	\arrow[no head, from=3-10, to=2-12]
	\arrow[from=3-10, to=4-10]
	\arrow[no head, from=3-12, to=2-10]
	\arrow["c", from=3-12, to=3-10]
	\arrow[no head, from=3-12, to=4-12]
	\arrow[dashed, from=4-2, to=5-3]
	\arrow[from=4-4, to=3-4]
	\arrow[dashed, from=4-10, to=5-11]
	\arrow[from=5-3, to=4-4]
	\arrow[from=5-11, to=4-12]
\end{tikzcd}}

In this case, since there is a single colour $0$ arrow  departing from $x_1$ the size of the complete subquiver $Q[x_1,z_1,\dots,z_t,x_k]$ must be at most $m+1$. Therefore,  the complete subquiver $Q'[x_1,z_1,\dots,z_t,x_k, x_2]$ has size at most $m+2$. In addition, the central $k$-cycle $(x_1\dots x_k)$  it is transformed into a central $(k-1)$-cycle $(x_2\dots x_k)$ with the same weight. \\

In contrast, for the case where the arrow is a central one and $k = 3$, the situation is as follows:\\

\adjustbox{scale=.7,center}{
\begin{tikzcd}
	& {v_r} && {z_1} &&&&& {v_r} && {z_1} \\
	{v_1} && \textcolor{rgb,255:red,92;green,92;blue,214}{{{{{x_1}}}}} && {z_t} & {} & {} && {v_1} & \textcolor{rgb,255:red,92;green,92;blue,214}{{x_1}} && {z_t} \\
	& {x_2} && {x_3} &&&&&& {x_2} && {x_3} \\
	& {w_1} && {w_j} &&&&&& {w_1} && {w_j} \\
	&& {} &&&&&&&& {} \\
	&&& {}
	\arrow[no head, from=1-2, to=2-3]
	\arrow[no head, from=1-2, to=3-2]
	\arrow[dashed, no head, from=1-4, to=2-5]
	\arrow[no head, from=1-4, to=3-4]
	\arrow[dashed, no head, from=1-9, to=2-9]
	\arrow[no head, from=1-9, to=2-10]
	\arrow[no head, from=1-11, to=2-10]
	\arrow[dashed, no head, from=1-11, to=2-12]
	\arrow[no head, from=1-11, to=3-10]
	\arrow[no head, from=1-11, to=3-12]
	\arrow[dashed, no head, from=2-1, to=1-2]
	\arrow[no head, from=2-1, to=2-3]
	\arrow[no head, from=2-3, to=1-4]
	\arrow[no head, from=2-3, to=2-5]
	\arrow["0"', color={rgb,255:red,214;green,92;blue,214}, from=2-3, to=3-2]
	\arrow[no head, from=2-5, to=3-4]
	\arrow["{{{{{{{{\mu_{x_1}}}}}}}}}", maps to, from=2-6, to=2-7]
	\arrow[no head, from=2-9, to=2-10]
	\arrow[no head, from=2-10, to=2-12]
	\arrow[no head, from=2-10, to=3-10]
	\arrow[no head, from=2-12, to=3-12]
	\arrow[no head, from=3-2, to=2-1]
	\arrow["{m-c-1}", from=3-2, to=3-4]
	\arrow[no head, from=3-2, to=4-2]
	\arrow[no head, from=3-2, to=4-4]
	\arrow["c"', from=3-4, to=2-3]
	\arrow[no head, from=3-10, to=2-12]
	\arrow[no head, from=3-10, to=4-10]
	\arrow[no head, from=3-10, to=4-12]
	\arrow[no head, from=3-12, to=2-10]
	\arrow[no head, from=3-12, to=4-12]
	\arrow[no head, from=4-2, to=3-4]
	\arrow[dashed, no head, from=4-2, to=4-4]
	\arrow[no head, from=4-4, to=3-4]
	\arrow[no head, from=4-10, to=3-12]
	\arrow[dashed, no head, from=4-10, to=4-12]
\end{tikzcd}}

Since $c \leq m-1$ and no arrow of colour $0$ can start at $x_1$, condition (5) of Definition \ref{la clase} implies that the number of  possible colours for arrows starting at $x_1$ is bounded by  $m-1-c$. It follows that $t \leq m-c-1$. Similarly, as an arrow of colour $0$ is permitted to start at $x_3$, we have $j \leq m - (m-c-1) = c+1$, which yields $t+j \leq m$. Consequently, the new quasi-complete subquivers ${}^{x_2}Q'^{x_3}[x_1, z_1, \dots, z_t]$ and ${}^{x_2}Q'^{x_3}[w_1, \dots, w_j]$ satisfy $j+(t+1) \leq m+1$, as required.\\

If, on the contrary, the $0$-coloured arrow is not central, the situation changes as follows:\\

\adjustbox{scale=.7,center}{
\begin{tikzcd}
	& {v_r} && {z_1} &&&&& {z_1} && {z_2} & \\
	{v_1} && \textcolor{rgb,255:red,92;green,92;blue,214}{{{{{{{x_1}}}}}}} && {z_t} &&& {v_r} && \textcolor{rgb,255:red,92;green,92;blue,214}{{{{x_1}}}} && {z_t} \\
	& {x_2} && {x_k} && {} & {} & {v_1} & {x_2} && {x_k} \\
	& {x_3} && {x_{k-1}} &&&&& {x_3} && {        x_{k-1}} \\
	&& {} &&&&&&& {} \\
	&&& {}
	\arrow[no head, from=1-2, to=2-3]
	\arrow[no head, from=1-2, to=3-2]
	\arrow[dashed, no head, from=1-4, to=2-5]
	\arrow[no head, from=1-4, to=3-4]
	\arrow[no head, from=1-9, to=2-8]
	\arrow["0", from=1-9, to=2-10]
	\arrow[no head, from=1-9, to=3-8]
	\arrow[no head, from=1-9, to=3-9]
	\arrow[no head, from=1-11, to=2-10]
	\arrow[dashed, no head, from=1-11, to=2-12]
	\arrow[draw=none, from=1-11, to=3-9]
	\arrow[no head, from=1-11, to=3-11]
	\arrow[dashed, no head, from=2-1, to=1-2]
	\arrow[no head, from=2-1, to=2-3]
	\arrow["0", color={rgb,255:red,214;green,92;blue,214}, from=2-3, to=1-4]
	\arrow[no head, from=2-3, to=2-5]
	\arrow["c", from=2-3, to=3-2]
	\arrow[no head, from=2-5, to=3-4]
	\arrow[no head, from=2-8, to=2-10]
	\arrow[dashed, no head, from=2-8, to=3-8]
	\arrow[no head, from=2-10, to=2-12]
	\arrow["{ \small{c-1}}", shift right, from=2-10, to=3-9]
	\arrow[no head, from=2-12, to=3-11]
	\arrow[no head, from=3-2, to=2-1]
	\arrow[from=3-2, to=4-2]
	\arrow["d", from=3-4, to=2-3]
	\arrow["{{{{{{{{{{\mu_{x_1}}}}}}}}}}}", maps to, from=3-6, to=3-7]
	\arrow[no head, from=3-8, to=2-10]
	\arrow[no head, from=3-8, to=3-9]
	\arrow[from=3-9, to=4-9]
	\arrow[draw=none, from=3-11, to=2-10]
	\arrow["{\small{d+1}}", shift right, from=3-11, to=2-10]
	\arrow[no head, from=3-11, to=4-11]
	\arrow[dashed, from=4-2, to=5-3]
	\arrow[from=4-4, to=3-4]
	\arrow[dashed, from=4-9, to=5-10]
	\arrow[from=5-3, to=4-4]
	\arrow[from=5-10, to=4-11]
\end{tikzcd}}

Clearly, the central cycle preserves both its size and its colouration. As in the previous case, the sizes of the complete subquivers involved continue to satisfy the required conditions.\\

Next, we consider the situation where two colour $0$ arrows emanate from $x_1$, neither of which is central.

\adjustbox{scale=.7,center}{
\begin{tikzcd}
	& {v_r} && {z_1} && \\
	{v_1} && \textcolor{rgb,255:red,92;green,92;blue,214}{{{{{{{x_1}}}}}}} && {z_t} \\
	& {x_2} && {x_k} && {} \\
	& {x_3} && {x_{k-1}} \\
	&& {} \\
	&&& {}
	\arrow[no head, from=1-2, to=3-2]
	\arrow[dashed, no head, from=1-4, to=2-5]
	\arrow[no head, from=1-4, to=3-4]
	\arrow[dashed, no head, from=2-1, to=1-2]
	\arrow[no head, from=2-1, to=2-3]
	\arrow["0"', color={rgb,255:red,214;green,92;blue,214}, from=2-3, to=1-2]
	\arrow["0", color={rgb,255:red,214;green,92;blue,214}, from=2-3, to=1-4]
	\arrow[no head, from=2-3, to=2-5]
	\arrow["c", from=2-3, to=3-2]
	\arrow[no head, from=2-5, to=3-4]
	\arrow[no head, from=3-2, to=2-1]
	\arrow[from=3-2, to=4-2]
	\arrow["d", from=3-4, to=2-3]
	\arrow[dashed, from=4-2, to=5-3]
	\arrow[from=4-4, to=3-4]
	\arrow[from=5-3, to=4-4]
\end{tikzcd}}

In view of condition (5), the cycles $(x_1 x_2 v_i)$ must have a weight of $m-1$; this requirement precludes the existence of a colour $0$ arrow from $x_1$ to some $v_i$. Therefore, this case is impossible. \\

We now turn to the scenario where $x_1$ has two outgoing colour $0$ arrows, one of which is central, under the assumption that $k > 3$.

\adjustbox{scale=.7,center}{
\begin{tikzcd}
	& {v_r} && {z_1} &&&&& {z_1} && {z_2} & \\
	{v_1} && \textcolor{rgb,255:red,92;green,92;blue,214}{{{{{{x_1}}}}}} && {z_t} &&& {v_r} && \textcolor{rgb,255:red,92;green,92;blue,214}{{{x_1}}} && {z_t} \\
	& {x_2} && {x_k} && {} & {} && {v_1} & {x_2} && {x_k} \\
	& {x_3} && {x_{k-1}} &&&&&& {x_3} && {        x_{k-1}} \\
	&& {} &&&&&&&& {} \\
	&&& {}
	\arrow[no head, from=1-2, to=2-3]
	\arrow[no head, from=1-2, to=3-2]
	\arrow[dashed, no head, from=1-4, to=2-5]
	\arrow[no head, from=1-4, to=3-4]
	\arrow[no head, from=1-9, to=2-8]
	\arrow[no head, from=1-9, to=2-10]
	\arrow[no head, from=1-9, to=3-9]
	\arrow[no head, from=1-11, to=2-10]
	\arrow[dashed, no head, from=1-11, to=2-12]
	\arrow[no head, from=1-11, to=3-10]
	\arrow[no head, from=1-11, to=3-12]
	\arrow[dashed, no head, from=2-1, to=1-2]
	\arrow[no head, from=2-1, to=2-3]
	\arrow["0", color={rgb,255:red,214;green,92;blue,214}, from=2-3, to=1-4]
	\arrow[no head, from=2-3, to=2-5]
	\arrow["0"', color={rgb,255:red,214;green,92;blue,214}, from=2-3, to=3-2]
	\arrow[no head, from=2-5, to=3-4]
	\arrow[no head, from=2-8, to=2-10]
	\arrow[dashed, no head, from=2-8, to=3-9]
	\arrow[no head, from=2-10, to=2-12]
	\arrow[no head, from=2-12, to=3-12]
	\arrow[no head, from=3-2, to=2-1]
	\arrow[from=3-2, to=4-2]
	\arrow["c"', from=3-4, to=2-3]
	\arrow["{{{{{{{{{\mu_{x_1}}}}}}}}}}", maps to, from=3-6, to=3-7]
	\arrow[no head, from=3-9, to=2-10]
	\arrow["0"', from=3-10, to=2-10]
	\arrow[no head, from=3-10, to=2-12]
	\arrow[from=3-10, to=4-10]
	\arrow[no head, from=3-12, to=2-10]
	\arrow["c", from=3-12, to=3-10]
	\arrow[no head, from=3-12, to=4-12]
	\arrow[dashed, from=4-2, to=5-3]
	\arrow[from=4-4, to=3-4]
	\arrow[dashed, from=4-10, to=5-11]
	\arrow[from=5-3, to=4-4]
	\arrow[from=5-11, to=4-12]
\end{tikzcd}}

After mutating at $x_1$, the vertices $x_2$ and $z_1$ exchange their positions, implying that the sizes of the complete subquivers involved remain unchanged. The central $k$-cycle $(x_1\dots x_k)$  it is transformed into a central $(k-1)$-cycle $(x_2\dots x_k)$ with the same weight. \\

Finally, we address the scenario where $x_1$ has two outgoing arrows of colour $0$, one of which is central and $k=3$. The mutation $\mu_{x_1}$, in this case, acts as illustrated in the
following figure by transforming the type II quiver $Q$ into the type I quiver $Q'$: \\

\adjustbox{scale=.7,center}{
\begin{tikzcd}
	& {v_r} && {z_1} &&&&& {z_1} && {z_2} \\
	{v_1} && \textcolor{rgb,255:red,92;green,92;blue,214}{{{{{x_1}}}}} && {z_t} & {} & {} & {v_r} && \textcolor{rgb,255:red,92;green,92;blue,214}{{x_1}} && {z_t} \\
	& {x_2} && {x_3} &&&&& {v_1} & {x_2} && {x_3} \\
	& {w_1} && {w_j} &&&&&& {w_1} && {w_j} \\
	&& {} &&&&&&&& {} \\
	&&& {}
	\arrow[no head, from=1-2, to=2-3]
	\arrow[no head, from=1-2, to=3-2]
	\arrow[dashed, no head, from=1-4, to=2-5]
	\arrow[no head, from=1-4, to=3-4]
	\arrow[no head, from=1-9, to=2-8]
	\arrow[no head, from=1-9, to=2-10]
	\arrow[no head, from=1-9, to=3-9]
	\arrow[no head, from=1-11, to=2-10]
	\arrow[dashed, no head, from=1-11, to=2-12]
	\arrow[no head, from=1-11, to=3-10]
	\arrow[no head, from=1-11, to=3-12]
	\arrow[dashed, no head, from=2-1, to=1-2]
	\arrow[no head, from=2-1, to=2-3]
	\arrow["0", color={rgb,255:red,214;green,92;blue,214}, from=2-3, to=1-4]
	\arrow[no head, from=2-3, to=2-5]
	\arrow["0"', color={rgb,255:red,214;green,92;blue,214}, from=2-3, to=3-2]
	\arrow[no head, from=2-5, to=3-4]
	\arrow["{{{{{{{{\mu_{x_1}}}}}}}}}", maps to, from=2-6, to=2-7]
	\arrow[no head, from=2-8, to=2-10]
	\arrow[dashed, no head, from=2-8, to=3-9]
	\arrow[no head, from=2-10, to=2-12]
	\arrow[no head, from=2-10, to=3-10]
	\arrow[no head, from=2-12, to=3-12]
	\arrow[no head, from=3-2, to=2-1]
	\arrow["{m-c-1}", from=3-2, to=3-4]
	\arrow[no head, from=3-2, to=4-2]
	\arrow[no head, from=3-2, to=4-4]
	\arrow["c"', from=3-4, to=2-3]
	\arrow[no head, from=3-9, to=2-10]
	\arrow[no head, from=3-10, to=2-12]
	\arrow[no head, from=3-10, to=4-10]
	\arrow[no head, from=3-10, to=4-12]
	\arrow[no head, from=3-12, to=2-10]
	\arrow[no head, from=3-12, to=4-12]
	\arrow[no head, from=4-2, to=3-4]
	\arrow[dashed, no head, from=4-2, to=4-4]
	\arrow[no head, from=4-4, to=3-4]
	\arrow[no head, from=4-10, to=3-12]
	\arrow[dashed, no head, from=4-10, to=4-12]
\end{tikzcd}}

By applying condition (6) to $Q$, we obtain $j + t \leq m + 1$, as required.

\end{proof}

For $m\geq 2$ the coloured mutation is not invertible in general. In \cite{GQP}, we provided an example showing that the set of coloured quivers with invertible mutations is significantly broader than the class $\mathcal{Q}_{n,m}^A$. Having recently identified another such family in \cite{GR}, we now demonstrate that this property extends to the class $\mathcal{Q}_{n,m}^D$ as well.

\begin{lema}\label{mutacion invertible}
The coloured quiver  mutation restricted to the  class $\mathcal{Q}_{n,m}^D$ is invertible.
Moreover if $Q\in \mathcal{Q}_{n,m}^D$ and $v$ is any vertex of $Q$ we have $\mu_v^{m+1}(Q)=Q$.
\end{lema}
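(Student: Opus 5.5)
The plan is to prove the second assertion, $\mu_v^{m+1}(Q)=Q$ for every $Q\in\mathcal{Q}_{n,m}^D$ and every vertex $v$. Invertibility then follows formally. By Proposition \ref{cerrado por mutaciones}, each iterate $\mu_v^{j}(Q)$ stays in $\mathcal{Q}_{n,m}^D$, so $\mu_v^{m}$ is a two-sided inverse of $\mu_v$ on the class.

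The key observation is local. Write $Q^{(j)}=\mu_v^{j}(Q)$. Each mutation at $v$ adds $1$ to the colour of every arrow into $v$ and subtracts $1$ from every arrow out of $v$ (step (3) of the algorithm). Hence, after $m+1$ mutations, the arrows incident to $v$ recover their original colours, modulo $m+1$, provided their underlying edges are never destroyed. Those edges are never destroyed, because step (1) never adds arrows incident to $v$. So it suffices to control the arrows between pairs $i,k\neq v$. Such an arrow is modified at the $j$-th step only when $v$ has an outgoing $0$-coloured arrow $v\to k$ in $Q^{(j-1)}$ and there is an arrow $i\to v$. I will fix the neighbourhood $N(v)$ and record, for each neighbour $u$, the colour $c_u=\overline{vu}$. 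At step $j$ the vertices $k$ with $\overline{vk}=0$ in $Q^{(j-1)}$ are exactly those with $c_k\equiv j-1 \pmod{m+1}$. The composite effect on the edge $ik$ is then a sum of contributions, one from the step where $c_k$ hits $0$ and one from the step where $c_i$ hits $0$, which is where $i\to v$ composed with $v\to i$ interacts.

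I would carry out the cancellation argument using the structural description of $N(v)$ supplied by Definition \ref{la clase} and by the case analysis in the proof of Proposition \ref{cerrado por mutaciones}. In every case, $N(v)$ is a union of at most two or three cliques, quasi-complete quivers, or a central cycle through $v$. Inside any triangle the colours satisfy $\kappa=m-1$ and the colours $\overline{vu}$ are pairwise distinct (Proposition \ref{Propiedades de la clase del An}(c), Proposition \ref{colores de los 3 ciclos}).

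Take a pair $i,k$ of neighbours of $v$ that are adjacent. Then $(vik)$ or $(vki)$ has weight $m-1$, and a direct computation shows the following. At the step where $\overline{vk}=0$, the edge $ik$ is deleted, since $\overline{iv}+\overline{vk}$ cancels against $\overline{ik}$ via skew-symmetry. At the step where $\overline{vi}=0$, the edge is recreated with the colour that, after the remaining shifts, equals the original one. This is the same bookkeeping as in the $\mathbb{A}$-case of \cite{GQP}, which I will quote verbatim for pairs lying in a common clique.

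Now take a pair $i,k$ of neighbours that are non-adjacent. This occurs for the pair $a,b$ in type I, for $x_{i-1},x_{i+1}$ around a central-cycle vertex, and for vertices in distinct cliques. Here the edge is created at the step where $\overline{vk}=0$ and must be destroyed again at a later step. I will check this by tracking the weight of the cycle through $v,i,k$, using Theorem \ref{theo-coloracion ciclos} and condition (1) of type II, which give $\kappa=m-1$ or $m+3-l$. These conditions force the created edge to be deleted exactly when $\overline{vi}$ reaches $0$.

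The main obstacle is the type I configuration with $v=x_i$ (or $v$ on a central $3$-cycle) having two or three outgoing $0$-arrows at once. Several edges are then created and cancelled in the same step, and the monochromatic cancellation (step (2)) interacts with edges towards both $a$ and $b$. For these cases I would not reason abstractly. Instead I would follow the explicit pictures in the proof of Proposition \ref{cerrado por mutaciones}: each of those cases is, up to relabelling, a transposition of two vertices or a type I$\leftrightarrow$II transition. I would verify that applying the appropriate pictured move $m+1$ times, with colours tracked, cycles back. This uses that each pictured move shifts the distinguished colours by exactly one.
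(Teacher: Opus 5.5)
Your core strategy is the paper's: the neighbourhood of $v$ is invariant, arrows at $v$ return to their colours after $m+1$ shifts, everything reduces to pairs $i,k$ of neighbours, and triangles of colouration $m-1$ are handled by the bookkeeping of \cite{GQP}. What you miss is the observation that makes the paper's proof a few lines and removes all the casework you plan. For $i\neq v\neq k$, the mutation formula for $\tilde q_{ik}^{(c)}$ involves only $q_{ik},q_{iv},q_{vk}$, and the cancellation in step (2) only ever involves arrows between the same pair $\{i,k\}$. Hence $\mu_v(Q)[v,i,k]=\mu_v(Q[v,i,k])$, and by induction the same holds for $\mu_v^{m+1}$. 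Now $Q[v,i,k]$ is either a coloured $\mathbb{A}_3$ (if $i,k$ are non-adjacent) or a triangle of colouration $m-1$. Both lie in $\mathcal{Q}^A_{3,m}$, so Proposition \ref{Propiedades de la clase del An}(b) gives $\mu_v^{m+1}(Q[v,i,k])=Q[v,i,k]$. This is exactly the paper's reduction to $Q[\{v,u_1,u_2\}]$ followed by Lemma~4.8 of \cite{GQP}.

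Consequently, two parts of your plan are off.

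\emph{Non-adjacent pairs need no global input.} Theorem \ref{theo-coloracion ciclos} and the colouration of the central cycle are irrelevant here. Suppose the edge is created when $\overline{vk}=0$, with $c'=\overline{iv}\le m-1$. The new triangle has weight $c'+0+(m-1-c')=m-1$. Exactly $m-1-c'$ steps later, when $\overline{vi}=0$, the path $k\to v\to i$ contributes $k\to i$ of colour $m-1-c'$, which cancels the existing arrow of colour $m-c'$.

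\emph{Your ``main obstacle'' does not exist.} Several $0$-arrows leaving $v$ at once do not interact, since each pair evolves independently. For example, $\overline{x_1a}=\overline{x_1b}=0$ simply produces arrows $a\to b$ of colours $0$ and $m$, which cancel. Moreover, the remedy you propose would not work as stated. Iterating $\mu_v$ does not repeat one pictured move of Proposition \ref{cerrado por mutaciones}: after the type I$\to$II moves of case (2), $x_1$ lies on the central triangle $(ax_1b)$, and the next mutation at $x_1$ falls under case (4). Those pictures also record the block structure, not all colours, so they cannot certify a return to $Q$.

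Drop that part and run your pairwise computation for every pair of neighbours; that is a complete proof. The only thing to track is that a triangle present at an intermediate step still has colouration $m-1$. This follows from closure of $\mathcal{Q}^A_{3,m}$ (or of $\mathcal{Q}^D_{n,m}$, as the paper notes).
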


\begin{proof}
Let $Q$ be a quiver belonging to the class $Q_{n,m}^{D}$ and let $v$ be a vertex of $Q$. It suffices to show that $\mu_v^{\,m+1}(Q)=Q$. If $v$ has at most one neighbour, the claim is immediate. Otherwise, let $u_1,u_2$ be two neighbours of $v$, and set $Q' = Q[\{v,u_1,u_2\}]$. Since $\mu_v$ acts trivially outside the neighbourhood of $v$, it is enough to prove that $\mu_v^{\,m+1}(Q')=Q'$. By Proposition~\ref{cerrado por mutaciones}, the class $Q_{n,m}^{D}$ is closed under coloured quiver mutation, so in particular every triangle has colouring $m-1$ throughout. The conclusion now follows as in Lemma~4.8 of \cite{GQP}.
\end{proof}

\section{Proof that $\mathcal{Q}_{n,m}^D$ is the coloured mutation class of $\mathbb{D}_n$}

In the following  assume $k$ to be at least equal to $3$.\\

\begin{lema}
Let $\mathcal{O}_1$ and $\mathcal{O}_2$ be two coloured quivers with the same underlying $k$-cycle $\mathcal{O}$ with $k\geq 3$.
If $\kappa (\mathcal{O}_1) =\kappa (\mathcal{O}_2)$  then $\mathcal{O}_1$ and $\mathcal{O}_2$ are mutation equivalent.
\end{lema}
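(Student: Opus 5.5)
We will show that every coloured $k$-cycle can be brought, by a sequence of mutations, to a normal form that depends only on its colouration $\kappa$. As the normal form we take the colouring in which the arrows $x_1\to x_2,\dots,x_{k-1}\to x_k$ have colour $0$ and $x_k\to x_1$ has colour $w_0-0=w_0$. Here $w_0$ is the weight of the cycle in the chosen orientation. The two orientations have weights $w$ and $km-w$, so the minimum $\kappa$ determines the unordered pair of weights. We work only with mutations that keep the underlying graph a $k$-cycle, in the spirit of the ``stable mutations'' in the proof of the tree lemma.

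\textbf{Invariance of the weight.} A mutation at $v$ changes the colours of the two cycle arrows at $v$ by $-1$ on the outgoing side and $+1$ on the incoming side. So the weight of the oriented cycle through $v$ is unchanged, provided the colours involved do not wrap modulo $m+1$. The only non-stable situations are the following: $v$ has exactly one outgoing arrow of colour $0$, say $v\to u$, while the other neighbour $u'$ has colour $c\neq 0$ from $v$. Then step (1) of the algorithm creates an arrow $u'\to u$ of colour $c$, and the underlying graph changes; for $k=3$ it can also cancel an existing arrow. A vertex $v$ with both outgoing colours $0$, or with both nonzero, is stable. At a stable vertex with both colours nonzero we lower each by one, so $w$ is preserved in both orientations. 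At a stable vertex with both colours $0$, both become $m$. This adds $m$ to each orientation weight, which stays consistent because $\kappa$ is computed from the actual colours and the two weights always sum to $km$ up to this bookkeeping; this point needs to be checked.

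\textbf{Moving colour along the cycle.} For an arrow $x_i\to x_{i+1}$ of colour $c>0$, we use mutations at $x_{i+1}$ to transfer one unit of colour: the incoming arrow from $x_i$ gains $+1$ in the reverse direction, while the outgoing arrow $x_{i+1}\to x_{i+2}$ loses $1$. This is legitimate as long as $x_{i+1}$ is stable, that is, as long as neither of its outgoing colours is $0$ alone. Iterating, we push all the weight of the oriented path $x_1\cdots x_k$ onto the single arrow $x_k\to x_1$, inducting on the number of nonzero arrows among $x_1x_2,\dots,x_{k-1}x_k$. When we meet a vertex whose single outgoing $0$ would make the mutation non-stable, we first mutate at a neighbouring vertex (as in cases (1)--(2) of the tree lemma), raising that colour so that the vertex becomes stable. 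After that we continue the transfer.

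\textbf{Main obstacle.} The hard part is controlling wrap-around: whenever a colour passes from $0$ to $m$ (or the reverse), the oriented weight jumps by $m+1$. We must show that the procedure can always avoid this, or that such a jump is compensated in the opposite orientation so that $\kappa$ is unchanged. If avoiding it fails, the fallback is an invariant argument: compute $w\bmod (m+1)$ together with $\min(w,km-w)$ and check that this pair is preserved by stable mutations. Then the reduction shows that the normal form is reached from any cycle with the given $\kappa$, so $\mathcal{O}_1$ and $\mathcal{O}_2$ reach the same normal form. By Lemma~\ref{mutacion invertible} the mutations are invertible ($\mu_v^{-1}=\mu_v^{m}$), so $\mathcal{O}_1$ and $\mathcal{O}_2$ are mutation equivalent.
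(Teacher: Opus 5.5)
Your strategy matches the paper's. Both use a normal form with all of $\kappa$ on one arrow, joined to an arbitrary colouring of the same weight by mutations that only shift one unit of colour between the two cycle arrows at the mutated vertex. The paper goes outward from the normal form via $\mu_1^{r_2+\cdots+r_k}$, $\mu_2^{r_3+\cdots+r_k},\dots$; you go inward.

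\textbf{The gap.} Your reduction is left unproved. You flag wrap-around and single-zero vertices as the ``main obstacle'' but do not resolve it. The detour through a mutation at a neighbour is not justified on a cycle: there is no leaf to absorb it as in the tree lemma, and the neighbour may itself be non-stable. The fallback invariant could only show that the weight is a necessary condition, never that the normal form is actually reached.

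\textbf{The missing observation.} With the right schedule the obstacle never arises. Take the orientation $x_1\to\cdots\to x_k\to x_1$ of weight $w=\kappa\le m$. The normal form needs $\kappa\le m$ anyway, and this covers $\kappa=m-1$, the only case used later. Mutate at $x_i$ only when $\overline{x_ix_{i+1}}>0$. Then $\overline{x_{i-1}x_i}\le w-1\le m-1$, so the other outgoing colour $m-\overline{x_{i-1}x_i}$ is positive as well. Hence step (1) of the algorithm adds nothing (also for $k=3$), no colour wraps, and exactly one unit moves from $x_ix_{i+1}$ to $x_{i-1}x_i$.

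Colour therefore flows backwards along the orientation. You must mutate at the tail $x_i$, not at $x_{i+1}$ as you wrote; mutating at $x_{i+1}$ would increase $\overline{x_ix_{i+1}}$. Emptying $x_{k-1}x_k$, then $x_{k-2}x_{k-1}$, \dots, then $x_1x_2$ gathers everything on $x_kx_1$ without ever meeting a non-stable vertex. The paper's exponents encode exactly this positivity.

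Your computation at a vertex with both outgoing colours $0$ is also wrong. The outgoing arrows go $0\mapsto m$ and the incoming ones go $m\mapsto 0$, so neither orientation weight changes. They could not both grow, since they always sum to $km$. In any case such mutations are not needed.

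\textbf{Invertibility.} Lemma~\ref{mutacion invertible} is proved only inside $\mathcal{Q}_{n,m}^D$, which for bare cycles means $\kappa=m-1$. For general $\kappa$ your final step therefore has nothing to cite. You can avoid invertibility altogether: your gathering procedure combined with the paper's distributing procedure gives $\mathcal{O}_1\to N\to\mathcal{O}_2$ and $\mathcal{O}_2\to N\to\mathcal{O}_1$ by pure shifts. The paper, which only goes outward from $N$, silently needs this reverse direction too.

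Both arguments also tacitly identify the two orientations up to reflection. Pure shifts preserve the weight of each orientation, so cycles realising $\kappa$ in opposite orientations reach mirror-image normal forms.
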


\begin{proof}
Assume that $\kappa (\mathcal{O}_1) =\kappa (\mathcal{O}_2)=c$.  The result follow from showing that for any decomposition $c=r_1+r_2+\cdots + r_k$ with $r_i\geq 0$ the following two coloured cycles are mutation equivalent.

\adjustbox{scale=.7,center}{
\begin{tikzcd}
	& k &&&& k \\
	{k-1} && 1 && {k-1} && 1 \\
	{k-2} && 2 && {k-2} && 2 \\
	{k-3} && 3 && {k-3} && 3 \\
	& \cdot &&&& \cdot \\
	& \mathcal{O} &&&& \mathcal{O'}
	\arrow["0"', from=1-2, to=2-1]
	\arrow["{r_k}"', from=1-6, to=2-5]
	\arrow["0"', from=2-1, to=3-1]
	\arrow["c"', from=2-3, to=1-2]
	\arrow["{r_{k-1}}"', from=2-5, to=3-5]
	\arrow["{r_1}"', from=2-7, to=1-6]
	\arrow["0"', from=3-1, to=4-1]
	\arrow["0"', from=3-3, to=2-3]
	\arrow["{r_{k-2}}"', from=3-5, to=4-5]
	\arrow["{r_2}"', from=3-7, to=2-7]
	\arrow[dotted, from=4-1, to=5-2]
	\arrow["0"', from=4-3, to=3-3]
	\arrow[dotted, from=4-5, to=5-6]
	\arrow["{r_3}"', from=4-7, to=3-7]
	\arrow[dotted, from=5-2, to=4-3]
	\arrow[dotted, from=5-6, to=4-7]
\end{tikzcd}}

If we apply the mutation $\mu_1^{r_2+\cdots+r_k}$ to the cycle $\mathcal{O}$ we obtain the cycle $\mathcal{O}^1$

\adjustbox{scale=.7,center}{
\begin{tikzcd}
	& k && \\
	{k-1} && 1 & \\
	{k-2} && 2 & \\
	{k-3} && 3 & \\
	& \cdot &&& \\
	\arrow["0"', from=1-2, to=2-1]
	\arrow["0"', from=2-1, to=3-1]
	\arrow["r_1"', from=2-3, to=1-2]
	\arrow["0"', from=3-1, to=4-1]
	\arrow["r_2+\cdots+r_k"', from=3-3, to=2-3]
    \arrow["0"', from=4-3, to=3-3]
    \arrow[dotted, from=4-1, to=5-2]
    \arrow[dotted, from=5-2, to=4-3]
\end{tikzcd}}

Then, if we apply the mutation $\mu_2^{r_3+\cdots+r_k}$ to the cycle $\mathcal{O}^1$ we obtain the cycle $\mathcal{O}^2$

\adjustbox{scale=.7,center}{
\begin{tikzcd}
	& k && \\
	{k-1} && 1 & \\
	{k-2} && 2 & \\
	{k-3} && 3 & \\
	& \cdot &&& \\
	\arrow["0"', from=1-2, to=2-1]
	\arrow["0"', from=2-1, to=3-1]
	\arrow["r_1"', from=2-3, to=1-2]
	\arrow["0"', from=3-1, to=4-1]
	\arrow["r_2"', from=3-3, to=2-3]
    \arrow["r_3+\cdots+r_k"', from=4-3, to=3-3]
    \arrow[dotted, from=4-1, to=5-2]
    \arrow[dotted, from=5-2, to=4-3]
\end{tikzcd}}

Finally, a simple iteration of this procedure gives the desired cycle $\mathcal{O}^k=\mathcal{O}'$.

\end{proof}

\begin{obs}
In particular all $m$-coloured $k$-cycles with colouration $m-1$ are mutation equivalent.
\end{obs}

\begin{lema}
The $m$-coloured $n$-cycles with colouration $m-1$ are mutation equivalent to $\mathbb{D}_n$.
\end{lema}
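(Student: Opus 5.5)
By the preceding Remark, all $m$-coloured $n$-cycles with colouration $m-1$ are mutation equivalent to each other. It therefore suffices to find one such cycle and one explicit sequence of mutations (or its inverse) connecting it to a $\mathbb{D}_n$-quiver. Mutation is invertible on $\mathcal{Q}_{n,m}^D$ by Lemma \ref{mutacion invertible}, so the direction does not matter, and by the first Lemma of Section \ref{grafo subyacente} any colouring of the $\mathbb{D}_n$ tree may be used. The plan is to start from the quiver $Q$ with underlying graph $\mathbb{D}_n$ and mutate at the branch vertex or at the leaves so as to create a triangle, then enlarge this triangle step by step into an $n$-cycle.

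\textbf{Step 1: creating a triangle.} Colour the arrows so that $n-2\to n-1$ has colour $0$, $n-2\to n$ has colour $c$, and the remaining arrows are arbitrary. Mutating at $n-1$ changes no third arrow, so instead we mutate at $n-2$ after arranging that $n-1\to n-2$ has colour $0$. Step (1) of the alternative algorithm then adds, for each arrow $i\to n-2$ of colour $c'$, a new arrow $i\to n-1$ of colour $c'$. If we arrange that only $n\to n-2$ has nonzero colour among the others (colour $0$ arrows are also composed, so we choose the colours so that exactly one new edge $n\to n-1$ appears), we obtain the triangle $(n-2,\,n-1,\,n)$ with a tail $1-2-\cdots-(n-2)$ attached at $n-2$. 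This is the type II picture with $k=3$ and $\alpha=0$. The colouration of the triangle is forced to be $m-1$ by closure of $\mathcal{Q}_{n,m}^D$ (Proposition \ref{cerrado por mutaciones}, condition (4)). Equivalently, one can verify this directly as in the $\mathbb{A}$ case, since after step (3) the colours $c'$, $0$ and $m-c'$ shift to give sum $m-1$.

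\textbf{Step 2: absorbing the tail.} Next we use case (3) of the proof of Proposition \ref{cerrado por mutaciones} in reverse. There, mutating at a vertex $v_r$ with a $0$-coloured arrow $v_r\to x_1$ into the central cycle, together with an appropriate arrow $x_1\to x_2$ forming a triangle, enlarges the central cycle by one vertex while keeping weight $m-1$. For a pendant path $v\to x_1$ of colour $0$ with no triangle, mutating at $x_1$ instead, when $x_1\to v$ has colour $0$, composes the cycle neighbour $x_k\to x_1$ with $x_1\to v$ to create the edge $x_k v$. This replaces $x_1$ by $v$ in a bigger cycle, or produces a triangle on the arrow $x_kx_1$ which is then absorbed by the case (3) move. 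Concretely, the induction hypothesis is that we have a central $k$-cycle of weight $m-1$ with a path of $n-k$ vertices attached at one cycle vertex. First recolour the path arrows via stable mutations at its interior and far vertices (these do not affect the cycle) so that the needed arrow has colour $0$. Then perform one or two mutations to pass to a $(k+1)$-cycle with a path of $n-k-1$ vertices. By Euler characteristic, $\chi=1$ is preserved throughout, and after $n-3$ steps we reach an $n$-cycle, whose colouration is $m-1$ by closure.

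\textbf{Main obstacle.} The hard part is Step 2: choosing colours on the tail so that the mutation produces exactly the new edge needed. In particular, one must ensure that the mutation does not also cancel a cycle edge through the monochromaticity step, and that it does not create a chord, so that the result is again an induced cycle. I would first carry out the computation for a single explicit local configuration: the attaching vertex $x_1$, its cycle neighbours $x_2$ and $x_k$, and the pendant vertex $v$. I would use the freedom given by the preceding Lemma to choose the cycle colours as $0,\dots,0,m-1$, placing the $m-1$ away from $x_1$, so that all arrows at $x_1$ have controlled colours. Only then would I apply the mutation rules.
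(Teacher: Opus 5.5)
Your reductions are fine: all $n$-cycles of colouration $m-1$ are equivalent, the colouring of the tree is irrelevant, and mutation is invertible on the class. The construction in Steps 1 and 2, however, cannot work.

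The first rule of the alternative algorithm composes \emph{every} arrow into the mutated vertex, whatever its colour, with each $0$-coloured arrow leaving it. By skew-symmetry every neighbour has such an arrow. So mutating at the branch vertex $n-2$ with $n-2\to n-1$ of colour $0$ joins both $n-3$ and $n$ to $n-1$; in a tree there is no edge to cancel against. You get two triangles glued along $\{n-2,n-1\}$, and no choice of colours produces a single triangle with a tail. The same oversight affects your Step 2 move at $x_1$: both cycle neighbours of $x_1$, not only $x_k$, get joined to $v$.

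More fundamentally, a triangle with a path hanging from one of its vertices is not in $\mathcal{Q}^D_{n,m}$. It is not of type I. A type II quiver without spikes has $\alpha=0$, so it consists of its central cycle alone, because type II only lets the $\mathbb{A}$-pieces hang from spike apices $z^{\alpha_i}_j$. Since the $\mathbb{D}_n$-quiver lies in $\mathcal{Q}^D_{n,m}$ and this class is closed under mutation (Proposition~\ref{cerrado por mutaciones}), that quiver is never reached. The same applies to your inductive invariant: a $k$-cycle with a nonempty path attached at a cycle vertex lies in the class only for $k=4$ (type I with $r=k=1$).

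Even without the classification, the plan fails. One mutation of an $n$-cycle of colouration $m-1$ either merely recolours it, or, when the mutated vertex $x_i$ has a $0$-arrow to a cycle neighbour $x_{i+1}$, joins $x_{i-1}$ to $x_{i+1}$ and yields an $(n-1)$-cycle carrying the spike $x_i$. Since $\mu_v^{m+1}$ is the identity on the class (Lemma~\ref{mutacion invertible}), every quiver one step away from the $n$-cycle has one of these two shapes. So the last step into the $n$-cycle starts at $\chi=2$, and a route keeping $\chi=1$ while the cycle grows by one vertex per step does not exist.

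The missing idea is that the arm must hang from a spike apex, and the cycle grows by walking a spike down the arm while $\chi$ temporarily rises. This is exactly what the paper's explicit sequence does. It starts from its $\mathbb{D}_n$-quiver with branch vertex $n-1$, leaves $n,n-2$ pointing into it, and arm $n-1\to n-3\to\cdots\to 1$, all of colour $0$.
\begin{enumerate}
\item $\mu_{n-1}$, whose only $0$-arrow points into the arm, joins both leaves to $n-3$. This gives a square with diagonal $\{n-1,n-3\}$, of type I with $k=2$.
\item $\mu_{n-2}$ cancels the diagonal, leaving a $4$-cycle with the arm at $n-3$. This is type I with $r=k=1$, the only admissible ``cycle with a tail''.
\item $\mu_{n-3}$, which now has two $0$-arrows out, gives a central triangle $(n,n-2,n-3)$ with spikes $n-1$ and $n-4$, the arm hanging from $n-4$; here $\chi=3$.
\item Each of $\mu_{n-4},\dots,\mu_2$ absorbs the current apex into the central cycle, as in case (3) of the proof of Proposition~\ref{cerrado por mutaciones}, and makes the next arm vertex the new apex.
\item $\mu_1$ absorbs the last apex, and mutating at $n-1$ absorbs the remaining spike. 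This yields the $n$-cycle $n\to n-1\to\cdots\to 1\to n$ of colouration $m-1$.
\end{enumerate}
For $m\geq 2$, the final mutation at $n-1$ must be applied $m$ times rather than once as written in the paper. After $\mu_1$ the arrow $n-1\to n$ has colour $m-1$, and only once it has colour $0$ does the mutation at $n-1$ cancel the chord $\{n,n-2\}$.
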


\begin{proof}
It is enough to observe that we can get the $m$-coloured $n$-cycle

\adjustbox{scale=.7,center}{
\begin{tikzcd}
	& n && \\
	{n-1} && 1 & \\
	{n-2} && 2 & \\
	{n-3} && 3 & \\
	& \cdot &&& \\
	\arrow["0"', from=1-2, to=2-1]
	\arrow["0"', from=2-1, to=3-1]
	\arrow["m-1"', from=2-3, to=1-2]
	\arrow["0"', from=3-1, to=4-1]
	\arrow["0"', from=3-3, to=2-3]
    \arrow["0"', from=4-3, to=3-3]
    \arrow[dotted, from=4-1, to=5-2]
    \arrow[dotted, from=5-2, to=4-3]
\end{tikzcd}}

 by performing the composition  $\mu_{n-1}\mu_{1}\mu_2 \cdots \mu_{n-3}\mu_{n-2}\mu_{n-1}$  to the quiver

 $$\xymatrix@R=13pt@C=18pt{&&&&& n \ar[ld]_0\\
1  & 2 \ar[l]^0 & \cdots n-4 \ar[l]^0 & n-3 \ar[l]^0  \ar[l]^0  & n-1  \ar[l]^0   & \\
&&&&& n-2  \ar[lu]^0 }$$

\end{proof}

\begin{lema}\cite[Lemma 5.4 and Remark 5.5]{GQP}\label{coloracion del An}
All colourations of $\mathbb{A}_t$ are mutation equivalent and in fact,  the quiver $ \xymatrix{ *+[o][F]{Q'} \ar^{0}[r] & t-1 \ar^{0}[r] &  \cdots  \ar^{0}[r] & 2 \ar^{0}[r] & 1  }$ is mutation equivalent to the  quiver $ \xymatrix{ *+[o][F]{Q'} \ar^{c_{t-1}}[r] & t-1 \ar^{c_{t-2}}[r] &  \cdots  \ar^{c_2}[r] & 2 \ar^{c_1}[r] & 1  }$  where the decoration $\SelectTips{eu}{10}\xymatrix@R=.2pc@C=.8pc{ *+[o][F]{Q'}}$ means that $Q'$ can be any subquiver.
\end{lema}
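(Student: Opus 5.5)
The plan is to reduce the statement to a local computation at the end of the $\mathbb{A}$-chain, carried out by induction on $t$. First observe that coloured mutation at a vertex $v$ only changes arrows incident to $v$, together with arrows between two neighbours of $v$ created or cancelled by step (1) of the alternative algorithm. So if we mutate only at vertices $1,\dots,t-1$ of the chain, only the chain arrows and possibly arrows into $Q'$ can be touched. The only vertex of the chain adjacent to $Q'$ is $t$, and we never mutate at $t$. Hence $Q'$ is a spectator, and we only need to control the arrows $t\to t-1\to\cdots\to 1$.

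We induct on the number of arrows we still have to recolour, working from the leaf $1$ towards $Q'$. Mutating at the leaf $1$ has no effect on the rest of the quiver, since $1$ has a single neighbour $2$ and step (1) needs two distinct neighbours. The only effect is to add $1$ to the colour of the arrow $2\to 1$, modulo $m+1$. Applying $\mu_1^{c_1}$ therefore turns the colour $0$ of $2\to1$ into $c_1$ and changes nothing else.

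Next we want to change the colour of $3\to 2$ without disturbing $2\to1$. Here I would use a variant of the tree lemma of Section \ref{grafo subyacente}, namely the stability trick. Vertex $2$ has neighbours $1$ and $3$. Mutating at $2$ adds a new arrow between $1$ and $3$ exactly when $2$ has an outgoing $0$-coloured arrow together with an arrow of any colour towards the other neighbour. To avoid this, before each $\mu_2$ we insert suitable powers of $\mu_1$. By Lemma \ref{mutacion invertible} applied to the $\mathbb{A}$-class, Proposition \ref{Propiedades de la clase del An}(b) gives $\mu_1^{m+1}=\mathrm{id}$, so these powers are harmless and can be undone at the end. We choose them so that either both arrows out of $2$ have colour $0$ or neither does; this is exactly the notion of stable vertex. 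Mutation at $2$ then preserves the $\mathbb{A}$-shape and shifts the colours of the arrows at $2$ by one in opposite directions. Iterating these stable mutations at $2$ and $1$ lets us reach any prescribed pair of colours $(c_2,c_1)$. We then proceed inductively along the chain: at step $i$ we perform stable mutations at vertex $i$, correcting with mutations at $i-1,\dots,1$ via the already treated subchain. This is precisely the induction in the proof of the tree lemma, applied with $T$ the path $1,\dots,t$.

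The first assertion, that all colourations of $\mathbb{A}_t$ are mutation equivalent, is then just the special case $Q'=\emptyset$, and it is also literally the tree lemma for a path.

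The main obstacle is making sure that no mutation at vertex $t-1$ ever creates an arrow into $Q'$. Such an arrow appears if $t-1$ has a $0$-coloured arrow to $t$ together with an arrow to $t-2$ of any colour. I expect to handle this by never mutating at $t-1$ in the non-stable situation, which the stability bookkeeping guarantees. I also need to check that the required number of corrective $\mu_{t-2}$'s exists, that is, that the colour of $t-1\to t-2$ can be set to $0$ or to nonzero as needed. This follows from the induction hypothesis on the shorter chain.
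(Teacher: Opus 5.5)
Your proposal is correct. The paper gives no proof of this lemma and only quotes it from \cite[Lemma 5.4 and Remark 5.5]{GQP}. Your argument is the stable-mutation induction that the paper itself uses for its tree lemma in Section~\ref{grafo subyacente}; for $Q'=\emptyset$ that lemma with $T=\mathbb{A}_t$ is exactly the first assertion. The relative version with $Q'$ follows as you say: you never mutate at the attaching vertex $t$, and stability at $t-1$ is exactly what prevents an arrow between $t$ and $t-2$ from appearing.

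For the induction to close, state the hypothesis in the form your correction step actually uses. That form is: any colouring of $t\to t-1\to\cdots\to 1$ can be turned into any other by stable mutations at $1,\dots,t-1$ only. This is stronger than mutation equivalence, and it gives both directions of the equivalence at once.

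Two small corrections:
\begin{itemize}
\item A new arrow between the two neighbours of a chain vertex is created exactly when \emph{one but not both} of its outgoing arrows has colour $0$. Two $0$-coloured outgoing arrows produce arrows of colours $0$ and $m$, which cancel. Your phrase ``exactly when $2$ has an outgoing $0$-coloured arrow together with an arrow of any colour'' overstates this, though you use the right condition afterwards.
\item Your appeal to Lemma~\ref{mutacion invertible} and Proposition~\ref{Propiedades de la clase del An}(b) is not justified. With an arbitrary $Q'$ the quiver need not lie in $\mathcal{Q}^A_m$ or $\mathcal{Q}^D_{n,m}$. It is also unnecessary: $\mu_1^{m+1}=\mathrm{id}$ is immediate at a leaf, and since you reach every colouring from every other, nothing ever has to be undone.
\end{itemize}
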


\begin{lema}\label{An mas largo}
 The coloured quiver $$ \xymatrix@R=13pt@C=9pt{ v_2^k \ar@{..}[rr]  & &  v_2^1& & v_2  \ar[ll]  \ar[rd]&& v_1 \ar[dl]^{c_1} \ar[ll]_{c_2}  && v_1^1 \ar[ll] &&   v_1^t   \ar@{..}[ll]\\
 &&&&&  *+[o][F]{v}  &&&    }$$ is mutation equivalent to a coloured quiver of the form

  $$ \xymatrix@R=13pt@C=9pt{ v_2^k \ar@{..}[rr]  & &  v_2^1& & v_2  \ar[ll]  && v_1  \ar[ll]  && v_1^1 \ar[ll] &&  v_1^{t}\ar@{..}[ll]   &&  *+[o][F]{v} \ar[ll]    }$$

where the decoration $\SelectTips{eu}{10}\xymatrix@R=.2pc@C=.8pc{ *+[o][F]{v}}$ means that the vertex $v$ can be part of any subquiver.
\end{lema}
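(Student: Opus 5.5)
The plan is to induct on $t$, the length of the tail $v_1^1,\dots,v_1^t$. At each stage I use only local mutations at $v_1$ (or at the current neighbour of $v$) and recolourings from Lemma \ref{coloracion del An}. Every step is a mutation at a vertex of the $\mathbb{A}$-part, so it takes place inside a quiver of the class $\mathcal{Q}_m^A$ (after deleting the decoration of $v$). Hence Proposition \ref{Propiedades de la clase del An} applies: all triangles keep colouration $m-1$, two arrows leaving a vertex of a triangle have different colours, and mutation at any vertex is invertible with $\mu^{m+1}=\mathrm{id}$. Invertibility is what makes ``is mutation equivalent to'' symmetric at each step.

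\emph{Step 1: break the triangle $(v v_1 v_2)$.} The triangle has colouration $m-1$, and by Proposition \ref{Propiedades de la clase del An}(c) the arrows $v_1\to v$ and $v_1\to v_2$ have different colours. Applying $\mu_{v_1}$ the appropriate number of times (between $0$ and $m$), I make the arrow $v_1\to v$ have colour $0$, while $v_1\to v_2$ has some nonzero colour. I then apply $\mu_{v_1}$ once more and use the alternative algorithm. The composite $v_2\to v_1\to v$ adds an arrow $v_2\to v$ whose colour is $m-\overline{v_1v_2}$. The condition $\kappa((vv_1v_2))=m-1$ forces this colour to be exactly the one that cancels the existing pair between $v_2$ and $v$. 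The composite $v_1^1\to v_1\to v$ adds a new edge $v_1^1$--$v$. After this step the underlying graph has $v$ attached to $v_1$ and $v_1^1$, and $v_1^1$ is adjacent to $v_1$, so a new triangle $(v v_1 v_1^1)$ appears while the edge $v$--$v_2$ disappears. When $t=0$, no new edge is created, and we already have the path $v_2^k\cdots v_2 - v_1 - v$.

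\emph{Step 2: inductive shift.} The configuration now consists of a triangle $(v\,v_1\,v_1^1)$ with the chain $v_2^k\cdots v_2 v_1$ on one side and the chain $v_1^2\cdots v_1^t$ hanging off $v_1^1$. Up to relabelling ($v_1^1$ playing the role of $v_1$ and $v_1$ that of $v_2$), this is exactly the hypothesis of the lemma with the tail shortened by one. Induction on $t$ therefore yields the path ending in $v_1^t - v$. A final application of Lemma \ref{coloracion del An} adjusts the colours along the chain; it allows arbitrary colours on the $\mathbb{A}$-chain attached to the decorated part. Since the statement asks only for ``a coloured quiver of the form'', the colours need not be tracked exactly.

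\emph{Main obstacle.} The delicate point is the cancellation in Step 1. I must verify that the new arrow $v_2\to v$ produced by the composite really has colour $m-\overline{vv_2}$ when measured in the right direction, so that the two pairs cancel completely rather than leaving a residual edge. This is a direct computation from $\overline{vv_1}+\overline{v_1v_2}+\overline{v_2v}=m-1$, combined with the fact that step (3) of the algorithm shifts the colours of the arrows at $v_1$ but not the arrow between $v_2$ and $v$. I would first check this computation carefully for general $c_1,c_2$. I would also check that no other neighbour of $v$ (inside the decorated part) receives an arrow: this holds because $v_1$ is not adjacent to the rest of the decoration.
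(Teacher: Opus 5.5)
Your Step 1 assumes that repeated mutation at $v_1$ only lowers the colours at $v_1$ until $\overline{v_1v}=0$. This is where the argument breaks. It is true that $\mu_{v_1}$ never changes the set of neighbours of $v_1$ and lowers every colour $\overline{v_1w}$ by one modulo $m+1$. But each time some $\overline{v_1w}$ equals $0$, the vertex $w$ is transferred from the clique through $v_1$ containing it to the other clique through $v_1$: its edges to the old clique-mates cancel, and edges to the new ones are created.

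Your recipe amounts to applying $\mu_{v_1}$ exactly $\overline{v_1v}+1$ times. It yields the triangle $(vv_1v_1^1)$ with $v_2$ pendant only when $\overline{v_1v}<\min(\overline{v_1v_2},\overline{v_1v_1^1})$. If $\overline{v_1v_2}<\overline{v_1v}$, then $v_2$ leaves the triangle first. If $\overline{v_1v_1^1}\le\overline{v_1v}$, then $v_1^1$ moves no later than $v$; when both arrows have colour $0$ simultaneously, $v$ and $v_1^1$ just swap cliques and $v$ ends up pendant at $v_1$.

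For a concrete failure, take $m=2$, $k=0$, $t=1$, $\overline{v_1v}=2$, $\overline{v_1v_2}=1$, $\overline{v_2v}=0$, $\overline{v_1v_1^1}=2$. The first $\mu_{v_1}$ is a pure recolouring. The second cancels the edge $v$--$v_2$ and creates $v_2$--$v_1^1$, so $v$ becomes pendant. The third returns to the original quiver. Since $\mu_{v_1}^3=\mathrm{id}$, no power of $\mu_{v_1}$ gives the configuration your induction needs.

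The point you single out as the main obstacle actually goes the other way. When $\overline{v_1v}=0$, the triangle condition gives $\overline{v_2v_1}=\overline{v_2v}-1$. The pair cancels precisely because the new arrow $v_2\to v$ has a \emph{different} colour from the old one. An arrow of the same colour $m-\overline{vv_2}$, which is what you propose to verify, would produce a double arrow.

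The induction can be repaired by fixing the tail colour first. Put $a=\overline{v_1v}$, $b=\overline{v_1v_2}$, $d=\overline{v_1v_1^1}$. A neighbour $w$ switches clique exactly at the $(\overline{v_1w}+1)$-st application of $\mu_{v_1}$. Hence the required shape lies in the $\mu_{v_1}$-orbit if and only if $a<\min(b,d)$ or $a>\max(b,d)$. By Lemma \ref{coloracion del An} you may choose $d$ freely:
\begin{itemize}
\item If $a<b$, take $d=m$. Then $\mu_{v_1}^{a+1}$ acts exactly as you describe, its first $a$ steps being recolourings.
\item If $a>b$, take $d=0$. Then $\mu_{v_1}^{b+1}$ first moves $v_1^1$ into the clique $\{v_1,v,v_2\}$ and then moves $v_2$ out of it.
\end{itemize}
The base case $t=0$ needs the same care. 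Stop at the first application of $\mu_{v_1}$ at which $\overline{v_1v}$ or $\overline{v_1v_2}$ is $0$; this already gives a path of the required form. Continuing to $\overline{v_1v}=0$ as you propose would recreate the triangle when $b<a$.

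For comparison, the paper gives no direct argument; it obtains the statement as a special case of \cite[Lemma 4.3]{GR}.
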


\begin{proof} This lemma is a special case of \cite[Lemma 4.3]{GR}.

\end{proof}

In the sequel if a quiver $Q$ belong to the class $\mathcal{Q}_{m}^A$ we will assume that $Q$ is an $\mathbb{A}_t$-quiver (for some value of $t$). Therefore, we  will simply write that $Q$ is an $\mathbb{A}$-quiver. \\

\begin{lema}\label{tipo II --- Dn}
All $m$-coloured quivers of type II are mutation equivalent to $\mathbb{D}_n$.
\end{lema}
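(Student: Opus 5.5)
We argue by induction on the Euler characteristic $\chi(Q)$, with a secondary reduction on the number of vertices not on the central cycle. Since mutation equivalence is an equivalence relation and, by Lemma \ref{mutacion invertible}, coloured mutation restricted to $\mathcal{Q}_{n,m}^D$ is invertible, it suffices to find, for a type II quiver $Q$, a sequence of mutations taking it to a quiver already known to be mutation equivalent to $\mathbb{D}_n$. By the lemma on $n$-cycles and the Remark preceding it, every $m$-coloured $n$-cycle of colouration $m-1$ is such a quiver. Our target is therefore to turn $Q$ into its bare central cycle enlarged to length $n$.

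\textbf{Step 1: straighten the $\mathbb{A}$-parts.} Each $Q^i$ hanging off the central cycle or off a complete quiver $Q_{\alpha_i}$ is an $\mathbb{A}$-quiver by our convention. Using Lemma \ref{coloracion del An}, we recolour it freely. Then, repeatedly applying Lemma \ref{An mas largo}, we reattach it as a linear tail so that all arrows are oriented toward the attaching vertex.

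\textbf{Step 2: eliminate the complete quivers $Q_{\alpha_i}$.} Take a complete quiver $Q_{\alpha_i}=Q[x_i,x_{i+1},z_1,\dots,z_r]$. Choose a vertex $z=z_j$ and mutate it the appropriate number of times (at most $m$, using $\mu_z^{m+1}=\mathrm{id}$) until there is a $0$-coloured arrow $z\to x_{i+1}$. Case (3) of the proof of Proposition \ref{cerrado por mutaciones} then applies: $\mu_z$ inserts $z$ into the central cycle, giving $(x_1\cdots x_i z x_{i+1}\cdots x_k)$. This new cycle still has colouration $m-1$, while $Q_{\alpha_i}$ shrinks by one vertex. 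By the Euler characteristic formula for type II, $\chi$ does not increase, and the size $k$ of the central cycle strictly increases. After reaching $r_{\alpha_i}=0$ for all $i$, we are left with a colouration-$(m-1)$ cycle with linear $\mathbb{A}$-tails attached at cycle vertices.

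\textbf{Step 3: absorb the tails.} For a tail $x_i - w_1 - w_2 - \cdots$, mutate $w_1$ until it forms a triangle $(x_i w_1 x_{i+1})$ with a $0$-coloured arrow pointing into the cycle. One way to arrange this is to first mutate $x_i$ so that the arrow $x_i\to w_1$ has colour $0$ and $\kappa$ forces the new triangle weight to be $m-1$. We then apply Step 2 again to put $w_1$ on the cycle. Iterating, all $n$ vertices end up on a single $n$-cycle of colouration $m-1$, which is mutation equivalent to $\mathbb{D}_n$.

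The main obstacle is Step 3, together with the colour bookkeeping in Step 2. We must guarantee that the required $0$-coloured arrow toward $x_{i+1}$ (rather than toward $x_i$) can be produced without violating condition (5) or (6). We must also ensure that a triangle is created at all, since mutating a tail vertex with a $0$-arrow to $x_i$ creates an edge to every out-neighbour of $x_i$. Our first attempt will be to mimic the explicit sequence $\mu_{n-1}\mu_1\cdots\mu_{n-1}$ from the $n$-cycle lemma locally: treat a tail plus a cycle arrow as a $\mathbb{D}$-shaped piece and use that computation to close it into a longer cycle. If that fails, we will instead run the reverse moves of case (4) of the proof of Proposition \ref{cerrado por mutaciones}, invoking invertibility.
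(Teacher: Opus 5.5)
There is a genuine gap in Step 2, and it is the worry you raise at the end. Normalise the orientation so that $w(x_1\cdots x_k)=m-1$, as in case (3) of Proposition~\ref{cerrado por mutaciones}. For a spike vertex $z$ of $Q_{\alpha_i}$, condition (5) of Definition~\ref{la clase} gives $\overline{zx_{i+1}}-\overline{zx_i}=1+\overline{x_ix_{i+1}}\geq 1$. So pure recolouring at $z$ can never produce a $0$-arrow $z\to x_{i+1}$; the only possible target on the cycle is $x_i$.

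Worse, this works for at most one spike vertex. All triangles have colouration $m-1$, so one can read $\overline{uv}+1$ as a clockwise distance between marked points on an $(m+2)$-gon, and (5) places every $z_j$ after $x_{i+1}$ and before $x_i$. Hence exactly one spike vertex has its smallest colour inside $Q_{\alpha_i}$ pointing to $x_i$. For every other $z_j$, the first $0$-arrow produced by recolouring points to another $z_l$ (or to the tail vertex). The next $\mu_{z_j}$ is then no longer a recolouring: it ejects $z_l$ from $Q_{\alpha_i}$. That is precisely case (2) of the paper's proof, handled there by $\mu_{y_1}$ followed by Lemma~\ref{An mas largo}, which pushes $y_2$ into a tail and lowers $\chi$. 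Invoking $\mu_z^{m+1}=\mathrm{id}$ does not bypass it.

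Your description of what an absorption produces is also wrong. If $z$ carries a tail $z - w_1 - \cdots$, then $w_1\to z\xrightarrow{0}x_i$ makes $\mu_z$ add the edge $w_1x_i$, so $w_1$ becomes a new spike on the new cycle arrow. If also $\overline{zw_1}=0$, then $w_1$ instead takes the place of $z$ in the complete quiver, which is the paper's case (1). Tails therefore never end up hanging at cycle vertices; condition (3) of Definition~\ref{la clase} only allows $\mathbb{A}$-parts at the vertices $z_j^{\alpha_i}$. So Step 3 targets a configuration that never arises, and in any case it is a list of attempts rather than an argument.

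The repair is within reach of your own plan and makes Step 3 unnecessary:
\begin{enumerate}
\item In some $Q_{\alpha_i}$ with $r_{\alpha_i}\geq 1$, pick the spike vertex $z$ whose smallest colour inside $Q_{\alpha_i}$ is $\overline{zx_i}$. Note that $\overline{zx_i}\leq m-1$.
\item Use Lemma~\ref{coloracion del An} to recolour its tail so that $\overline{zw_1}=m$.
\item Apply $\mu_z$ exactly $\overline{zx_i}$ times. These are genuine recolourings, since no outgoing colour of $z$ is $0$ along the way.
\item Apply $\mu_z$ once more; this is case (3).
\end{enumerate}
Each such move lengthens the central cycle by one. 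It leaves a type II quiver whose $\mathbb{A}$-parts are again paths attached at spike vertices, so the move is available as long as some $r_{\alpha_i}\geq 1$, and it must eventually stop. When all $r_{\alpha_i}=0$, condition (3) with $\alpha=0$ says $Q$ is the bare $n$-cycle, and the $n$-cycle lemma finishes.

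This is a genuinely different route from the paper's. You would use the length of the central cycle as monovariant and only ever absorb vertices into it, avoiding the paper's case (2) and Lemma~\ref{An mas largo}, at the cost of justifying the choice of $z$ from condition (5). The paper instead starts from an arbitrary $y_1$, handles both possible targets of its $0$-arrow, and tracks the decrease of $\chi$.
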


\begin{proof}
Let $Q$ be a quiver of type II. By the previous lemma it is enough to show that we can get from $Q$, performing a sequence of mutations that does not change the type of the quiver,  an $m$-coloured $n$-cycle $Q'$ with colouration $m-1$; and equivalently an $m$-coloured quiver in the class $\mathcal{Q}_{n,m}^D$ (of type II) with Euler characteristic equal to one. \\

Assume that $Q$ is not a $n$-cycle, and equivalently $Q$ has a central $(r+2)$-cycle $(x_1x_2 \cdots x_r ab)$ and at least a complete subquiver $Q[a,b,y_1,\cdots ,y_k]$  (with $k \geq 1$) as in the following figure:\\

 \adjustbox{scale=.8,center}{
\begin{tikzcd}
         &&&&\\
         &&&&\\
	& {y_1} && {x_1} && \\
	{y_2} && a && {x_2} &\\
	{y_{k-1}} && b && {x_{r-1}} &\\
	& {y_k} && {x_r} && \\
	&&&& {}
	\arrow[no head, from=3-2, to=4-1]
	\arrow[no head,  from=3-2, to=4-3]
	\arrow[no head, from=3-2, to=5-1]
	\arrow[no head, from=3-2, to=5-3]
	\arrow[no head, from=3-2, to=6-2]
	\arrow[no head, from=3-4, to=4-5]
	\arrow[no head, from=4-1, to=4-3]
	\arrow[dashed, no head, from=4-1, to=5-1]
	\arrow[no head, from=4-1, to=5-3]
	\arrow[no head, from=4-1, to=6-2]
	\arrow[no head, from=4-3, to=3-4]
	\arrow[no head, from=4-3, to=5-3]
	\arrow[dashed, no head, from=4-5, to=5-5]
	\arrow[no head, from=5-1, to=4-3]
	\arrow[no head, from=5-1, to=5-3]
	\arrow[no head, from=5-1, to=6-2]
	\arrow[no head, from=5-3, to=6-2]
	\arrow[no head, from=5-3, to=6-4]
	\arrow[no head, from=6-2, to=4-3]
	\arrow[no head, from=6-4, to=5-5]
\end{tikzcd}}

Our goal is to demonstrate that it is possible to apply a sequence of mutations to $Q$, leading to a resulting quiver with a strictly decreased Euler characteristic. Given that the minimal Euler characteristic for a type II quiver is $1$, we must eventually obtain a quiver which is an $n$-cycle.\\

By \cite[Lemma 4.3]{GR} every quiver in the class $Q^A_{m}$ is mutation equivalent to an $\mathbb{A}$-quiver. Then, following the notation of Definition \ref{la clase},  we can assume that every subquiver $Q^i $, attached to the vertex $y_i$,  of $Q$ is an $\mathbb{A}$-quiver. \\

We will denote by $ \xymatrix@R=15pt@C=15pt{ y_i \ar@{-}[r] & y_i^1 \ar@{-}[r] & y_i^2 \ar@{..}[r] &  y_i^{t_i}} $  the $\mathbb{A}_{t_i+1}$-quiver  $Q^i $ attached to the vertex $y_i$.  Furthermore, by Lemma \ref{coloracion del An}, we may assume a specific and convenient colouring.\\

Let $\{\overline{y_1y_2}, \cdots, \overline{y_1y_k}, \overline{y_1a}, \overline{y_1b} \}$ the set of all the colours of the arrows going out from the vertex $y_1$; and let $c$ be the minimum  of these colours. We can assume, without loss of generality that $c=0$. Therefore,  we have two cases: either there exists an arrow of colour zero $y_1\rightarrow y_j$ ($j\neq 1$) or there exists an arrow of colour zero $y_1\rightarrow a$ (or $b$).

\begin{enumerate}

  \item \underline{There exists an arrow of colour zero $y_1\rightarrow a$}: If this is the case, we can mutate at vertex $y_1$ obtaining:

  \adjustbox{scale=.7,center}{
\begin{tikzcd}
	& {y_1^{t_1}} &&&&&&& {y_1^{t_1}} \\
	& {y_1^1} &&&&&&& {y_1^2} \\
	& {y_1} && {x_1} && {} & {} && {y_1^1} & {y_1} & a & {x_1} \\
	{y_2} && a && {x_2} &&& {y_2} &&&&& {x_2} \\
	{y_{k-1}} && b && {x_{r-1}} &&& {y_{k-1}} &&& b && {x_{r-1}} \\
	& {y_k} && {x_r} &&&&& {y_k} &&& {x_r} \\
	&&&& {}
	\arrow[dashed, no head, from=1-9, to=2-9]
	\arrow[dashed, no head, from=2-2, to=1-2]
	\arrow[no head, from=2-9, to=3-9]
	\arrow["0", from=3-2, to=2-2]
	\arrow[no head, from=3-2, to=4-1]
	\arrow["0", color={rgb,255:red,229;green,62;blue,244}, from=3-2, to=4-3]
	\arrow[no head, from=3-2, to=5-1]
	\arrow[no head, from=3-2, to=5-3]
	\arrow[no head, from=3-2, to=6-2]
	\arrow[no head, from=3-4, to=4-5]
	\arrow["{{{{{\mu_{y_1}}}}}}", maps to, from=3-6, to=3-7]
	\arrow["0", from=3-9, to=3-10]
	\arrow[no head, from=3-9, to=6-9]
	\arrow["m", color={rgb,255:red,214;green,92;blue,214}, from=3-10, to=3-11]
	\arrow[no head, from=3-10, to=5-8]
	\arrow[no head, from=3-10, to=5-11]
	\arrow[no head, from=3-10, to=6-9]
	\arrow[no head, from=3-11, to=3-12]
	\arrow[no head, from=3-12, to=4-13]
	\arrow[no head, from=4-1, to=4-3]
	\arrow[dashed, no head, from=4-1, to=5-1]
	\arrow[no head, from=4-1, to=5-3]
	\arrow[no head, from=4-1, to=6-2]
	\arrow[no head, from=4-3, to=3-4]
	\arrow[no head, from=4-3, to=5-3]
	\arrow[dashed, no head, from=4-5, to=5-5]
	\arrow[no head, from=4-8, to=3-9]
	\arrow[no head, from=4-8, to=3-10]
	\arrow[dashed, no head, from=4-8, to=5-8]
	\arrow[no head, from=4-8, to=5-11]
	\arrow[no head, from=4-8, to=6-9]
	\arrow[dashed, no head, from=4-13, to=5-13]
	\arrow[no head, from=5-1, to=4-3]
	\arrow[no head, from=5-1, to=5-3]
	\arrow[no head, from=5-1, to=6-2]
	\arrow[no head, from=5-3, to=6-2]
	\arrow[no head, from=5-3, to=6-4]
	\arrow[no head, from=5-8, to=5-11]
	\arrow[no head, from=5-8, to=6-9]
	\arrow[no head, from=5-11, to=3-9]
	\arrow[no head, from=5-11, to=6-12]
	\arrow[no head, from=6-2, to=4-3]
	\arrow[no head, from=6-4, to=5-5]
	\arrow[no head, from=6-9, to=5-11]
	\arrow[no head, from=6-12, to=5-13]
\end{tikzcd}}

In this mutated quiver the central cycle increases its length by $1$ while  the complete quiver containing the vertex  $y_1$ remains of the same size $k+2$. Consequently, the Euler characteristic is unchanged.

Then, we can assume that  $y_1^1$ has valence $k+1$ (seen in the underlying graph). Otherwise, we continue mutating at the vertices   $y_1^2, \cdots , y_1^{t_1}$ in a  procedure where the size of the  central cycle  grows without changing the Euler characteristic of the quiver. \\

Finally, a mutation at vertex $y_1^1$ increases the  length of the central cycle  by $1$ and  reduces by $1$ the size of the complete quiver containing the vertex  $y_1^1$ as in the figure below. \\


\adjustbox{scale=.7,center}{
\begin{tikzcd}
	& {y_1^1} && a && {} & {} && {y_1^1} & {y_1} & a & {x_1} \\
	{y_2} && {y_1} && {x_1} &{}&{}& {y_2} &&&&& {x_2} \\
	{y_{k-1}} && b && {x_{r-1}} &&& {y_{k-1}} && b &&& {x_{r-1}} \\
	& {y_k} && {x_r} &&&&& {y_k} &&& {x_r} \\
	&&&& {}
	\arrow[no head, from=1-2, to=2-1]
	\arrow["0", from=1-2, to=2-3]
	\arrow[no head, from=1-2, to=3-1]
	\arrow[from=1-2, to=3-3]
	\arrow[no head, from=1-2, to=4-2]
	\arrow[no head, from=1-4, to=2-5]
	\arrow["{{{{{{\mu_{y_1^1}}}}}}}", maps to, from=2-6, to=2-7]
	\arrow["m", from=1-9, to=1-10]
	\arrow[no head, from=1-9, to=3-8]
	\arrow[no head, from=1-9, to=4-9]
	\arrow["m", from=1-10, to=1-11]
	\arrow[no head, from=1-11, to=1-12]
	\arrow[no head, from=1-12, to=2-13]
	\arrow[no head, from=2-1, to=2-3]
	\arrow[dashed, no head, from=2-1, to=3-1]
	\arrow[no head, from=2-1, to=3-3]
	\arrow[no head, from=2-1, to=4-2]
	\arrow["m", color={rgb,255:red,214;green,92;blue,214}, from=2-3, to=1-4]
	\arrow[no head, from=2-3, to=3-3]
	\arrow[dashed, no head, from=2-5, to=3-5]
	\arrow[no head, from=2-8, to=1-9]
	\arrow[dashed, no head, from=2-8, to=3-8]
	\arrow[no head, from=2-8, to=3-10]
	\arrow[no head, from=2-8, to=4-9]
	\arrow[dashed, no head, from=2-13, to=3-13]
	\arrow[no head, from=3-1, to=2-3]
	\arrow[no head, from=3-1, to=3-3]
	\arrow[no head, from=3-1, to=4-2]
	\arrow[no head, from=3-3, to=4-2]
	\arrow[no head, from=3-3, to=4-4]
	\arrow[no head, from=3-8, to=3-10]
	\arrow[no head, from=3-8, to=4-9]
	\arrow[no head, from=3-10, to=1-9]
	\arrow[no head, from=3-10, to=4-12]
	\arrow[no head, from=4-2, to=2-3]
	\arrow[no head, from=4-4, to=3-5]
	\arrow[no head, from=4-9, to=3-10]
	\arrow[no head, from=4-12, to=3-13]
\end{tikzcd}}

Therefore, the value of the Euler characteristic  was decreased by  $k$.  \\

  \item \underline{There exists an arrow of colour zero $y_1\rightarrow y_j$}: Observe that we have this case if $k>1$ and we can assume that $j=2$. Therefore, we can mutate at $y_1$ obtaining the following:

\adjustbox{scale=.7,center}{
\begin{tikzcd}
	&& {y_1^{t_1}} &&&&&&& {y_1^{t_1}} \\
	&& {y_1^1} &&&&&&& {y_1^1} \\
	&& {y_1} && {x_1} &&& {} & {y_2} & {y_1} && {x_1} \\
	{} & {y_2} && a && {x_2} &&& {y_3} && a && {x_2} \\
	& {y_{k-1}} && b && {x_{r-1}} & {} & {} & {y_{k-1}} && b && {x_{r-1}} \\
	&& {y_k} && {x_r} &&&&& {y_k} && {x_r} \\
	&&&&& {}
	\arrow[dashed, no head, from=1-10, to=2-10]
	\arrow[dashed, no head, from=2-3, to=1-3]
	\arrow["0"', from=2-10, to=3-9]
	\arrow["m", from=3-3, to=2-3]
	\arrow["0"', color={rgb,255:red,214;green,92;blue,214},  from=3-3, to=4-2]
	\arrow[no head, from=3-3, to=4-4]
	\arrow[no head, from=3-3, to=5-2]
	\arrow[no head, from=3-3, to=5-4]
	\arrow[no head, from=3-3, to=6-3]
	\arrow[no head, from=3-5, to=4-6]
	\arrow[dashed, no head, from=3-9, to=3-8]
	\arrow["{{{{m-1}}}}"', from=3-10, to=2-10]
	\arrow["m"', from=3-10, to=3-9]
	\arrow[no head, from=3-10, to=4-11]
	\arrow[no head, from=3-10, to=5-9]
	\arrow[no head, from=3-10, to=5-11]
	\arrow[no head, from=3-10, to=6-10]
	\arrow[no head, from=3-12, to=4-13]
	\arrow[dashed, no head, from=4-2, to=4-1]
	\arrow[no head, from=4-2, to=4-4]
	\arrow[dashed, no head, from=4-2, to=5-2]
	\arrow[no head, from=4-2, to=5-4]
	\arrow[no head, from=4-2, to=6-3]
	\arrow[no head, from=4-4, to=3-5]
	\arrow[no head, from=4-4, to=5-2]
	\arrow[no head, from=4-4, to=5-4]
	\arrow[no head, from=4-4, to=6-3]
	\arrow[dashed, no head, from=4-6, to=5-6]
	\arrow[no head, from=4-9, to=3-10]
	\arrow[no head, from=4-9, to=4-11]
	\arrow[dashed, no head, from=4-9, to=5-9]
	\arrow[no head, from=4-9, to=5-11]
	\arrow[no head, from=4-9, to=6-10]
	\arrow[no head, from=4-11, to=3-12]
	\arrow[no head, from=4-11, to=6-10]
	\arrow[no head, from=4-11, to=5-11]
	\arrow[dashed, no head, from=4-13, to=5-13]
	\arrow[no head, from=5-2, to=5-4]
	\arrow[no head, from=5-2, to=6-3]
	\arrow[no head, from=5-4, to=6-3]
	\arrow[no head, from=5-4, to=6-5]
	\arrow["{{{{\mu_{y_1}}}}}", maps to, from=5-7, to=5-8]
	\arrow[no head, from=5-9, to=4-11]
	\arrow[no head, from=5-9, to=5-11]
	\arrow[no head, from=5-9, to=6-10]
	\arrow[no head, from=5-11, to=6-12]
	\arrow[no head, from=6-5, to=5-6]
	\arrow[no head, from=6-10, to=5-11]
	\arrow[no head, from=6-12, to=5-13]
\end{tikzcd}}

The quiver $\mu_{y_1}(Q)$ is, by  Lemma \ref{An mas largo},  mutation equivalent to the quiver: \\

\adjustbox{scale=.7,center}{
\begin{tikzcd}
	{y_2^t} & {y_2^1} & {y_2} & {y_1^1} & {y_1^{t_1-1}} & {y_1^{t_1}} & {y_1} && {x_1} \\
	&&&&& {y_3} && a && {x_2} \\
	&&&&& {y_{k-1}} && b && {x_{r-1}} \\
	&&&&&& {y_k} && {x_r} \\
	&& {}
	\arrow[dashed, no head, from=1-2, to=1-1]
	\arrow[no head, from=1-3, to=1-2]
	\arrow[no head, from=1-4, to=1-3]
	\arrow[dashed, no head, from=1-5, to=1-4]
	\arrow[no head, from=1-6, to=1-5]
	\arrow["m"', from=1-7, to=1-6]
	\arrow[from=1-7, to=2-8]
	\arrow[no head, from=1-7, to=3-6]
	\arrow[from=1-7, to=3-8]
	\arrow[no head, from=1-7, to=4-7]
	\arrow[no head, from=1-9, to=2-10]
	\arrow[no head, from=2-6, to=1-7]
	\arrow[no head, from=2-6, to=2-8]
	\arrow[dashed, no head, from=2-6, to=3-6]
	\arrow[no head, from=2-6, to=3-8]
	\arrow[no head, from=2-6, to=4-7]
	\arrow[no head, from=2-8, to=1-9]
	\arrow[no head, from=2-8, to=3-6]
	\arrow[no head, from=2-8, to=3-8]
	\arrow[dashed, no head, from=2-10, to=3-10]
	\arrow[no head, from=3-6, to=3-8]
	\arrow[no head, from=3-6, to=4-7]
	\arrow[no head, from=3-8, to=4-9]
	\arrow[no head, from=4-7, to=2-8]
	\arrow[no head, from=4-7, to=3-8]
	\arrow[no head, from=4-9, to=3-10]
\end{tikzcd}}

 with the same central cycle but with a complete subquiver  $Q[a,b,y_1,y_3, \cdots, y_k]$ of size $k+1$ . Therefore, the value of the Euler characteristic, after these mutations,   was decreased by  $k$.  \\

 \end{enumerate}

\end{proof}

\begin{lema}\label{tipo I eq a tipo II}
All $m$-coloured quivers of type I with $r+k > 1$ are mutation equivalent to an $m$-coloured quivers of type II.
\end{lema}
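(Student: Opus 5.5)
The plan is to exhibit, for a type I quiver $Q$ with $r+k>1$, a short sequence of mutations at vertices of the quasi-complete part that produces a quiver with a central cycle. Proposition \ref{cerrado por mutaciones} guarantees that every intermediate quiver lies in $\mathcal{Q}_{n,m}^D$, so the only task is to reach a quiver of type II. The attached quivers $Q^i$ can be left untouched. Alternatively, exactly as in the proof of Lemma \ref{tipo II --- Dn}, we may first use \cite[Lemma 4.3]{GR} and Lemma \ref{coloracion del An} to assume each $Q^i$ is an $\mathbb{A}$-quiver with a convenient colouring.

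\textbf{Case $r,k\geq 1$.} Take a vertex $x_1$ of ${}^aQ^b[x_1,\dots,x_k]$. By Remark \ref{no puedo tener 2 flechas de color cero hacia a y b} we have $\overline{x_1a}\neq\overline{x_1b}$. Let $c$ be the minimum colour of an arrow leaving $x_1$ inside $Q[x_1,\dots,x_k,a,b]$ and its attached complete quiver. Apply $\mu_{x_1}^{c}$. Outgoing colours drop by one at each step and no $0$-arrow leaves $x_1$ before the last step, so these mutations only recolour and keep the type. Afterwards some arrow leaving $x_1$ has colour $0$.
\begin{itemize}
\item If a $0$-coloured arrow $x_1\to a$ (or $x_1\to b$) appears, one more mutation at $x_1$ falls into case (2)(c) or case (2)(b) of the proof of Proposition \ref{cerrado por mutaciones}. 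Both explicitly turn $Q$ into a type II quiver with central cycle $(a x_1 b)$, so we are done.
\item Otherwise the $0$-arrow points to some $x_j$ or to some $z_i$ in the attached complete quiver. We then rotate instead: choose, among all vertices of $\{x_i\}\cup\{y_j\}$, the arrow towards $a$ or $b$ that is reached first by repeated mutation.
\end{itemize}

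The cleanest way to organise the second alternative is to use $\mu_a$. By Proposition \ref{colores de los 3 ciclos}(b), the colours $\overline{ax_i},\overline{ay_j}$ are pairwise distinct among $r+k\leq m+1$ vertices. Hence, after $\mu_a^{s}$ for suitable $s$ (using $\mu_a^{m+1}=\mathrm{id}$ by Lemma \ref{mutacion invertible}), there is an arrow $a\to x_1$ of colour $0$. This only moves vertices between the two quasi-complete blocks (case (1)) and keeps the type. Equivalently, by skew-symmetry, we may arrange that the arrow $x_1\to a$ has colour $m$. Then mutate at $x_1$ until its smallest outgoing colour is $0$. The key claim is that this smallest colour is attained at $a$ or $b$ for a suitable choice of $x_1$. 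Prove it by taking $x_1$ so that $\overline{x_1a}$ is minimal and then using Proposition \ref{colores de los 3 ciclos}(a): the triangle condition $\kappa=m-1$ forces $\overline{x_1x_j}>\overline{x_1a}$ for the appropriate ordering. Then invoke case (2)(b)/(c) to reach type II.

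\textbf{Case $r=0$, $k\geq 2$.} Pick a triangle $(a x_1 x_2)$, which has colouration $m-1$, and argue in the same way. Mutating at $x_1$ until an arrow of colour $0$ leaves it, the candidates are $a$, $b$, some $x_j$, or some $z_i$. If it is $a$ or $b$, apply case (2)(c) (or the three-arrow case) to get type II. If it is some $x_j$ or $z_i$, the pigeonhole arguments of Lemma \ref{r=0 y k=m+1 entonces flecha de color 0} and Proposition \ref{colores de los 3 ciclos} show that along the cycle of mutations $\mu_{x_1}^{t}$, $t=0,\dots,m$, every outgoing arrow of $x_1$ takes colour $0$ at some step. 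In particular, at some step $x_1\to a$ has colour $0$. The complication is that intermediate mutations may first hit a $0$-arrow to an $x_j$ and change the shape. Handle this by always choosing the vertex whose arrow to $a$ has the smallest colour among all $x_i$, so that the arrow to $a$ is reached first.

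\textbf{Main obstacle.} The main obstacle is this choice-of-vertex argument: showing that some $x_i$ (or $y_j$) has its arrow to $a$ or $b$ as the minimum-colour outgoing arrow, and not merely tied with or beaten by an arrow into a complete quiver $Q[x_i,z_1,\dots,z_t]$. I would first try to prove it by comparing colours around the triangles $(x_i a x_j)$ and the $4$-cycles $(x_i a y_j b)$, using Theorem \ref{theo-coloracion ciclos}. If the attached $z$'s obstruct this, I would first shrink the complete quivers by mutations at the $z$'s, as in Lemma \ref{tipo II --- Dn}.
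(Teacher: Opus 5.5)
Your route differs from the paper's, and its core idea is sound. The paper normalises each $Q^i$ to an $\mathbb{A}$-quiver with first arrow of colour $m$ and starts from an arbitrary vertex $y_1$ of a block. Whenever the minimal outgoing colour at $y_1$ is attained at some $y_j$, it mutates at $y_1$ to expel $y_j$ into the tail (straightened with Lemma~\ref{An mas largo}), and iterates until the minimum sits at $a$ or $b$.

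Choosing $x_1$ to minimise $\overline{x_ia}$ over its block removes this iteration. The walk $x_1\to x_j\to a\to x_1$ cannot have weight $m-1$, since that would force $\overline{x_1x_j}=\overline{x_1a}-\overline{x_ja}-1<0$. Hence it has weight $2m+1$, and $\overline{x_1x_j}=m+1+\overline{x_1a}-\overline{x_ja}>\overline{x_1a}$ for all $j\neq 1$. So your ``main obstacle'' is already settled inside the quasi-complete part, with no need for Theorem~\ref{theo-coloracion ciclos} or the $4$-cycles. It also gives $c=\min(\overline{x_1a},\overline{x_1b})\le m-1$; for a one-vertex block, use Remark~\ref{no puedo tener 2 flechas de color cero hacia a y b} instead.

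To handle the attached parts, commit to the normalisation you list as optional: take $Q^1$ to be an $\mathbb{A}$-quiver with $\overline{x_1x_1^1}=m$, via \cite[Lemma 4.3]{GR} and Lemma~\ref{coloracion del An}. Then $\mu_{x_1}^{c}$ is a pure recolouring, the tail arrow ends with colour $m-c\geq 1$, and no $z$ ever competes. Drop the $\mu_a^s$ step. It makes $\overline{x_1a}=m$, i.e.\ it selects the maximiser, and for that vertex the same computation gives $\overline{x_1x_j}=m-1-\overline{x_ja}<\overline{x_1a}$.

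The genuine gap is the tie $\overline{x_1a}=\overline{x_1b}$, which can occur when $r=0$. You claim the last mutation then falls into a case of Proposition~\ref{cerrado por mutaciones} yielding type II. But with $0$-arrows to both $a$ and $b$ (the two- and three-arrow cases there), the arrows $a\to b$ created by $\mu_{x_1}$ have colours $m$ and $0$ and cancel. So $a$ and $b$ stay non-adjacent and the result is again of type I.

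This is unavoidable when $r=0$ and $k=m+1$. By Proposition~\ref{colores de los 3 ciclos}(a), the $m+1$ colours $\overline{x_1x_j}$ ($j\neq 1$) and $\overline{x_1a}$ are pairwise distinct, and likewise with $b$. Hence $\overline{x_1a}=\overline{x_1b}$ at every vertex of the block. Concretely, take $m=2$, no tails, $\overline{x_ia}=\overline{x_ib}=i-1$ ($i=1,2,3$), $\overline{x_1x_2}=\overline{x_2x_3}=2$, $\overline{x_1x_3}=1$; all triangles have colouration $1$. Then $\mu_{x_1}(Q)$ is a triangle on $x_1,x_2,x_3$ with $a$ and $b$ attached only at $x_1$, which is not of type II.

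You need a tie-breaking move first, and here $\mu_a$ is the right tool. If no $\overline{x_ia}$ equals $m$, then $\mu_a$ is a pure recolouring raising every $\overline{x_ia}$ by one; $x_1$ stays the minimiser and now $\overline{x_1b}<\overline{x_1a}$. Otherwise $\mu_a$ moves one vertex into the empty block (case (1) in that proof), after which $r,k\geq 1$ and Remark~\ref{no puedo tener 2 flechas de color cero hacia a y b} excludes ties. (The paper's closing step, ``assume $\overline{y_1b}$ is the minimum of $\{\overline{y_1a},\overline{y_1b}\}$'', passes over the same tie.) With these repairs your argument gives a shorter proof than the paper's iteration.
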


\begin{proof}
Let $Q$ be a quiver of type I with $r+k>1$. By \cite[Lemma 4.3]{GR} every quiver in the class $Q^A_{m}$ is mutation equivalent to an $\mathbb{A}$-quiver. Then, following the notation of Definition \ref{la clase},  we can assume that every subquiver $Q^i $  of $Q$ is an $\mathbb{A}$-quiver. \\

Consider the vertex $y_1$ of  ${}^aQ^b[y_1,\cdots,y_k]$ and let $c_1$ be the minimum of the  colours  $\overline{y_1v}$ of the arrows going out from the vertex $y_1$ ( where $v\in \{a,b,y_2, \cdots, y_k \} $).   We can assume that $c_1=0$ and therefore: either there exists an arrow $y_1\rightarrow a$ (or $y_1\rightarrow b$) of colour zero or  there exists an arrow $y_1\rightarrow y_j$ of colour zero.  \\

If we mutate at the vertex $y_1$ in the first case (i.e., $\overline{y_1a}=0$), we obtain  a quiver of type II as follows: \\
\\

\adjustbox{scale=.8,center}{
\begin{tikzcd}
	& {y_1^{t_1}} &&&&&&& {y_1^{t_1}} \\
	& {y_1^1} &&&&&&& {y_1^2} \\
	& {y_1} && {x_1} && {} & {} && {y_1^1} & {y_1} && {x_1} \\
	{y_2} && a && {x_2} &&& {y_2} &&& a && {x_2} \\
	{y_{k-1}} && b && {x_{r-1}} &&& {y_{k-1}} &&& b && {x_{r-1}} \\
	& {y_k} && {x_r} &&&&& {y_k} &&& {x_r} \\
	&&&& {}
	\arrow[dashed, no head, from=1-9, to=2-9]
	\arrow[dashed, no head, from=2-2, to=1-2]
	\arrow[no head, from=2-9, to=3-9]
	\arrow["0", from=3-2, to=2-2]
	\arrow[no head, from=3-2, to=4-1]
	\arrow["0", from=3-2, to=4-3]
	\arrow[no head, from=3-2, to=5-1]
	\arrow["{{{c}}}"{description, pos=0.3}, from=3-2, to=5-3]
	\arrow[no head, from=3-2, to=6-2]
	\arrow[no head, from=3-4, to=4-5]
	\arrow[no head, from=3-4, to=6-4]
	\arrow["{{{{\mu_{y_1}}}}}", maps to, from=3-6, to=3-7]
	\arrow["0", from=3-9, to=3-10]
	\arrow[no head, from=3-9, to=6-9]
	\arrow["m", from=3-10, to=4-11]
	\arrow[no head, from=3-10, to=5-8]
	\arrow["{{{{c-1}}}}"{description}, from=3-10, to=5-11]
	\arrow[no head, from=3-10, to=6-9]
	\arrow[no head, from=3-12, to=4-13]
	\arrow[no head, from=3-12, to=5-11]
	\arrow[no head, from=3-12, to=5-13]
	\arrow[no head, from=3-12, to=6-12]
	\arrow[no head, from=4-1, to=4-3]
	\arrow[dashed, no head, from=4-1, to=5-1]
	\arrow[no head, from=4-1, to=5-3]
	\arrow[no head, from=4-1, to=6-2]
	\arrow[no head, from=4-3, to=3-4]
	\arrow[no head, from=4-3, to=4-5]
	\arrow[no head, from=4-3, to=5-5]
	\arrow[no head, from=4-3, to=6-4]
	\arrow[dashed, no head, from=4-5, to=5-5]
	\arrow[no head, from=4-5, to=6-4]
	\arrow[no head, from=4-8, to=3-9]
	\arrow[no head, from=4-8, to=3-10]
	\arrow[dashed, no head, from=4-8, to=5-8]
	\arrow[no head, from=4-8, to=5-11]
	\arrow[no head, from=4-8, to=6-9]
	\arrow[no head, from=4-11, to=3-12]
	\arrow[no head, from=4-11, to=4-13]
	\arrow["c", from=4-11, to=5-11]
	\arrow[no head, from=4-11, to=5-13]
	\arrow[no head, from=4-11, to=6-12]
	\arrow[dashed, no head, from=4-13, to=5-13]
	\arrow[no head, from=4-13, to=6-12]
	\arrow[no head, from=5-1, to=4-3]
	\arrow[no head, from=5-1, to=5-3]
	\arrow[no head, from=5-1, to=6-2]
	\arrow[no head, from=5-3, to=3-4]
	\arrow[no head, from=5-3, to=4-5]
	\arrow[no head, from=5-3, to=5-5]
	\arrow[no head, from=5-3, to=6-2]
	\arrow[no head, from=5-3, to=6-4]
	\arrow[no head, from=5-5, to=3-4]
	\arrow[no head, from=5-8, to=5-11]
	\arrow[no head, from=5-8, to=6-9]
	\arrow[no head, from=5-11, to=3-9]
	\arrow[no head, from=5-11, to=4-13]
	\arrow[no head, from=5-11, to=5-13]
	\arrow[no head, from=5-11, to=6-12]
	\arrow[no head, from=6-2, to=4-3]
	\arrow[no head, from=6-4, to=5-5]
	\arrow[no head, from=6-9, to=5-11]
	\arrow[no head, from=6-12, to=5-13]
\end{tikzcd}}

In the second case (i.e., $\overline{y_1y_j}=0$), we can assume without loss of generality that $j=2$ and that the arrow $y_1\rightarrow y_1^1 $ has colour $m$. Therefore, after mutating at the vertex $y_1$, we obtain:  \\

\adjustbox{scale=.8,center}{
\begin{tikzcd}
	&& {y_1^{t_1}} &&&&&&& {y_1^{t_1}} \\
	&& {y_1^1} &&&&&&& {y_1^1} \\
	&& {y_1} && {x_1} &&& {} & {y_2} & {y_1} && {x_1} \\
	{} & {y_2} && a && {x_2} &&& {y_3} && a && {x_2} \\
	& {y_{k-1}} && b && {x_{r-1}} & {} & {} & {y_{k-1}} && b && {x_{r-1}} \\
	&& {y_k} && {x_r} &&&&& {y_k} && {x_r} \\
	&&&&& {}
	\arrow[dashed, no head, from=1-10, to=2-10]
	\arrow[dashed, no head, from=2-3, to=1-3]
	\arrow["0"', from=2-10, to=3-9]
	\arrow["m", from=3-3, to=2-3]
	\arrow["0"', from=3-3, to=4-2]
	\arrow["c", from=3-3, to=4-4]
	\arrow[no head, from=3-3, to=5-2]
	\arrow["d"{description, pos=0.3}, from=3-3, to=5-4]
	\arrow[no head, from=3-3, to=6-3]
	\arrow[no head, from=3-5, to=4-6]
	\arrow[no head, from=3-5, to=5-4]
	\arrow[no head, from=3-5, to=5-6]
	\arrow[no head, from=3-5, to=6-5]
	\arrow[dashed, no head, from=3-9, to=3-8]
	\arrow["{{{m-1}}}"', from=3-10, to=2-10]
	\arrow["m"', from=3-10, to=3-9]
	\arrow["{{{c-1}}}", from=3-10, to=4-11]
	\arrow["{{{d-1}}}"{description, pos=0.3}, from=3-10, to=5-11]
	\arrow[no head, from=3-10, to=6-10]
	\arrow[no head, from=3-12, to=4-13]
	\arrow[no head, from=3-12, to=5-11]
	\arrow[no head, from=3-12, to=5-13]
	\arrow[no head, from=3-12, to=6-12]
	\arrow[dashed, no head, from=4-2, to=4-1]
	\arrow[no head, from=4-2, to=4-4]
	\arrow[dashed, no head, from=4-2, to=5-2]
	\arrow[no head, from=4-2, to=5-4]
	\arrow[no head, from=4-2, to=6-3]
	\arrow[no head, from=4-4, to=3-5]
	\arrow[no head, from=4-4, to=4-6]
	\arrow[no head, from=4-4, to=5-2]
	\arrow[no head, from=4-4, to=5-6]
	\arrow[no head, from=4-4, to=6-3]
	\arrow[no head, from=4-4, to=6-5]
	\arrow[dashed, no head, from=4-6, to=5-6]
	\arrow[no head, from=4-6, to=6-5]
	\arrow[no head, from=4-9, to=3-10]
	\arrow[no head, from=4-9, to=4-11]
	\arrow[dashed, no head, from=4-9, to=5-9]
	\arrow[no head, from=4-9, to=5-11]
	\arrow[no head, from=4-9, to=6-10]
	\arrow[no head, from=4-11, to=3-12]
	\arrow[no head, from=4-11, to=4-13]
	\arrow[no head, from=4-11, to=5-13]
	\arrow[no head, from=4-11, to=6-10]
	\arrow[no head, from=4-11, to=6-12]
	\arrow[dashed, no head, from=4-13, to=5-13]
	\arrow[no head, from=4-13, to=6-12]
	\arrow[no head, from=5-2, to=5-4]
	\arrow[no head, from=5-2, to=6-3]
	\arrow[no head, from=5-4, to=4-6]
	\arrow[no head, from=5-4, to=5-6]
	\arrow[no head, from=5-4, to=6-3]
	\arrow[no head, from=5-4, to=6-5]
	\arrow["{{{\mu_{y_1}}}}", maps to, from=5-7, to=5-8]
	\arrow[no head, from=5-9, to=4-11]
	\arrow[no head, from=5-9, to=5-11]
	\arrow[no head, from=5-9, to=6-10]
	\arrow[no head, from=5-11, to=4-13]
	\arrow[no head, from=5-11, to=5-13]
	\arrow[no head, from=5-11, to=6-12]
	\arrow[no head, from=6-5, to=5-6]
	\arrow[no head, from=6-10, to=5-11]
	\arrow[no head, from=6-12, to=5-13]
\end{tikzcd}}

By Lemma \ref{An mas largo} the quiver $\mu_{y_1}(Q)$ is mutation equivalent to:\\

\adjustbox{scale=.8,center}{
\begin{tikzcd}
	{y_2^{t_2}} & {y_2^1} & {y_2} & {y_1^1} & {y_1^{t_1-1}} & {y_1^{t_1}} & {y_1} && {x_1} \\
	&&&&& {y_3} && a && {x_2} \\
	&&&&& {y_{k-1}} && b && {x_{r-1}} \\
	&&&&&& {y_k} && {x_r} \\
	&& {}
	\arrow[dashed, no head, from=1-2, to=1-1]
	\arrow[no head, from=1-3, to=1-2]
	\arrow[no head, from=1-4, to=1-3]
	\arrow[dashed, no head, from=1-5, to=1-4]
	\arrow[no head, from=1-6, to=1-5]
	\arrow["m"', from=1-7, to=1-6]
	\arrow["{{{{c-1}}}}", from=1-7, to=2-8]
	\arrow[no head, from=1-7, to=3-6]
	\arrow["{{{d-1}}}"{description, pos=0.3}, from=1-7, to=3-8]
	\arrow[no head, from=1-7, to=4-7]
	\arrow[no head, from=1-9, to=2-10]
	\arrow[no head, from=1-9, to=3-8]
	\arrow[no head, from=1-9, to=3-10]
	\arrow[no head, from=1-9, to=4-9]
	\arrow[no head, from=2-6, to=1-7]
	\arrow[no head, from=2-6, to=2-8]
	\arrow[dashed, no head, from=2-6, to=3-6]
	\arrow[no head, from=2-6, to=3-8]
	\arrow[no head, from=2-6, to=4-7]
	\arrow[no head, from=2-8, to=1-9]
	\arrow[no head, from=2-8, to=2-10]
	\arrow[no head, from=2-8, to=3-6]
	\arrow[no head, from=2-8, to=3-10]
	\arrow[no head, from=2-8, to=4-9]
	\arrow[dashed, no head, from=2-10, to=3-10]
	\arrow[no head, from=2-10, to=4-9]
	\arrow[no head, from=3-6, to=3-8]
	\arrow[no head, from=3-6, to=4-7]
	\arrow[no head, from=3-8, to=2-10]
	\arrow[no head, from=3-8, to=3-10]
	\arrow[no head, from=3-8, to=4-9]
	\arrow[no head, from=4-7, to=2-8]
	\arrow[no head, from=4-7, to=3-8]
	\arrow[no head, from=4-9, to=3-10]
\end{tikzcd}}

If $c-1$ or $d-1$ is the minimum of the colours of the arrows emanating from $y_1$, we can proceed as in the first case and obtain a quiver of type II.  If, instead, the minimum colour is, for example the colour $\overline{y_1y_3}$ of the arrow $y_1\rightarrow y_3$ we proceed as in the second case, obtaining: \\

\adjustbox{scale=.8,center}{
\begin{tikzcd}
	& {} & {y_3} & {y_2} & {y_1} && {x_1} \\
	&&& {y_4} && a && {x_2} \\
	&&& {y_{k-1}} && b && {x_{r-1}} \\
	&&&& {y_k} && {x_r} \\
	{}
	\arrow[dashed, no head, from=1-3, to=1-2]
	\arrow[dashed, no head, from=1-4, to=1-3]
	\arrow[dashed, no head, from=1-5, to=1-4]
	\arrow["{{{{{c-2}}}}}", from=1-5, to=2-6]
	\arrow[no head, from=1-5, to=3-4]
	\arrow["{{{{{d-2}}}}}"{description, pos=0.3}, from=1-5, to=3-6]
	\arrow[no head, from=1-5, to=4-5]
	\arrow[no head, from=1-7, to=2-8]
	\arrow[no head, from=1-7, to=3-6]
	\arrow[no head, from=1-7, to=3-8]
	\arrow[no head, from=1-7, to=4-7]
	\arrow[no head, from=2-4, to=1-5]
	\arrow[no head, from=2-4, to=2-6]
	\arrow[dashed, no head, from=2-4, to=3-4]
	\arrow[no head, from=2-4, to=3-6]
	\arrow[no head, from=2-4, to=4-5]
	\arrow[no head, from=2-6, to=1-7]
	\arrow[no head, from=2-6, to=2-8]
	\arrow[no head, from=2-6, to=3-8]
	\arrow[no head, from=2-6, to=4-5]
	\arrow[no head, from=2-6, to=4-7]
	\arrow[dashed, no head, from=2-8, to=3-8]
	\arrow[no head, from=2-8, to=4-7]
	\arrow[no head, from=3-4, to=2-6]
	\arrow[no head, from=3-4, to=3-6]
	\arrow[no head, from=3-4, to=4-5]
	\arrow[no head, from=3-6, to=2-8]
	\arrow[no head, from=3-6, to=3-8]
	\arrow[no head, from=3-6, to=4-7]
	\arrow[no head, from=4-5, to=3-6]
	\arrow[no head, from=4-7, to=3-8]
\end{tikzcd}}

We can iterate this procedure until we get the $[a,b]$-quasi-complete quiver ${}^aQ^b[y_1]$. Assume that  $\overline{y_1b}$ es the minimum of the colours $\{\overline{y_1a},\overline{y_1b}\}$. Therefore, after mutating  $\overline{y_1b}+1$ times at $y_1$, we obtain the following quiver of type II:\\

\adjustbox{scale=.8,center}{
\begin{tikzcd}
	& {} & {*} & {*} & {y_1} && {x_1} \\
	&&&&& a && {x_2} \\
	&&&&& b && {x_{r-1}} \\
	&&&&&& {x_r} \\
	{}
	\arrow[dashed, no head, from=1-4, to=1-3]
	\arrow[no head, from=1-4, to=3-6]
	\arrow[no head, from=1-5, to=1-4]
	\arrow[from=1-5, to=2-6]
	\arrow[from=1-5, to=3-6]
	\arrow[no head, from=1-7, to=2-8]
	\arrow[no head, from=1-7, to=3-6]
	\arrow[no head, from=1-7, to=3-8]
	\arrow[color={rgb,255:red,13;green,33;blue,28}, no head, from=1-7, to=4-7]
	\arrow[no head, from=2-6, to=1-7]
	\arrow[no head, from=2-6, to=2-8]
	\arrow[no head, from=2-6, to=3-8]
	\arrow[no head, from=2-6, to=4-7]
	\arrow[no head, from=2-8, to=3-6]
	\arrow[dashed, no head, from=2-8, to=3-8]
	\arrow[no head, from=2-8, to=4-7]
	\arrow[no head, from=3-6, to=2-6]
	\arrow[no head, from=3-6, to=3-8]
	\arrow[no head, from=3-6, to=4-7]
	\arrow[no head, from=4-7, to=3-8]
\end{tikzcd}}

\end{proof}

Follows immediately from \cite[Lemma 4.3]{GR} that an  $m$-coloured quiver of type I with  $r+k = 1$ is mutation equivalent to $\mathbb{D}_n$. Consequently, Lemmas \ref{tipo II --- Dn} and \ref{tipo I eq a tipo II} directly imply the following.\\

\begin{coro}\label{tipo I --- Dn}
All $m$-coloured quivers of type I are mutation equivalent to $\mathbb{D}_n$.
\end{coro}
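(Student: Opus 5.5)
We split according to the value of $r+k$, exactly as the text preceding the statement suggests. Let $Q$ be a quiver of type I.

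\emph{Case $r+k>1$.} By Lemma~\ref{tipo I eq a tipo II}, $Q$ is mutation equivalent to a quiver $Q''$ of type II. By Lemma~\ref{tipo II --- Dn}, $Q''$ is mutation equivalent to $\mathbb{D}_n$. Mutation equivalence is transitive, and by Lemma~\ref{mutacion invertible} it is symmetric inside $\mathcal{Q}_{n,m}^D$. Since every intermediate quiver stays in $\mathcal{Q}_{n,m}^D$ by Proposition~\ref{cerrado por mutaciones}, we may concatenate the two chains. This gives $Q\sim\mathbb{D}_n$ in this case.

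\emph{Case $r+k=1$.} Without loss of generality $r=0$ and $k=1$. Then $Q$ consists of $a$, $b$ and $x_1$, with edges $a - x_1$ and $b - x_1$ and no edge $a - b$, together with the subquivers $Q^i$ in $\mathcal{Q}^A_m$ hanging off these vertices. By condition (2) of Type I there is exactly one such piece, attached at $x_1$. So $Q$ is the vertex $x_1$ with two leaves $a,b$ glued to a quiver $Q^1\in\mathcal{Q}_m^A$ through $x_1$. The argument then runs as follows.
\begin{enumerate}
\item By \cite[Lemma 4.3]{GR}, apply mutations only at vertices of $Q^1\setminus\{x_1\}$ to turn $Q^1$ into an $\mathbb{A}$-quiver with $x_1$ as an endpoint. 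This is the same reduction already used in the proofs of Lemmas~\ref{tipo II --- Dn} and \ref{tipo I eq a tipo II}.
\item Check that these mutations do not alter the arrows $x_1a$ and $x_1b$ in a way that creates new edges. This is the one point needing care: a mutation at a neighbour $w$ of $x_1$ inside $Q^1$ could create an edge from $w$'s other neighbours to $x_1$, but never to $a$ or $b$, since $a,b$ are not adjacent to $w$. So the local shape at $a,b$ is preserved. The only effect is that the colours of arrows at $x_1$ may shift, which is harmless.
\item The resulting quiver has underlying graph a tree of Dynkin type $\mathbb{D}_n$, with $x_1$ as the branch vertex and $a,b$ as the two short arms.
\item By the Lemma on trees in Section~\ref{grafo subyacente}, any two colourings of the same tree are mutation equivalent. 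Hence this quiver is mutation equivalent to the standard $\mathbb{D}_n$ quiver.
\end{enumerate}

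The only potential obstacle is step 2, namely ensuring that the reduction of $Q^1$ to an $\mathbb{A}$-quiver, which is done with $x_1$ regarded as a marked vertex that may belong to any subquiver (as in Lemma~\ref{An mas largo}), produces $x_1$ as an endpoint. If \cite[Lemma 4.3]{GR} only yields some $\mathbb{A}$-quiver containing $x_1$ in the middle, we instead get a tree with a trivalent vertex $x_1$ and two arms of arbitrary length besides $a,b$. That is not of type $\mathbb{D}_n$, so it does not settle the case.

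In that event, first use Lemma~\ref{An mas largo} to slide the arm: it transfers the marked vertex to the end of the $\mathbb{A}$-chain. Applying it with $v=x_1$ moves $x_1$ to an endpoint, while $a,b$ ride along since they are part of the arbitrary subquiver at $v$. This yields the $\mathbb{D}_n$ tree.
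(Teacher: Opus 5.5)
Your case split, and your treatment of $r+k>1$ via Lemmas~\ref{tipo I eq a tipo II} and~\ref{tipo II --- Dn}, match the paper's proof. Your steps 1--4 for $r+k=1$ unpack the paper's one-line appeal to \cite[Lemma 4.3]{GR}. The problem is the fallback in your last paragraph, which does not work.

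Lemma~\ref{An mas largo} needs $x_1$ to be the vertex $v$ of a triangle $(v v_1 v_2)$ whose other two vertices carry the arms. If $x_1$ is an interior vertex of the path $Q^1$, there is no such triangle. In any case the lemma leaves every other neighbour of $v$ attached, since they belong to the \emph{any subquiver} at $v$. So it cannot lower the valency of $x_1$, which in that situation is $4$ ($a$, $b$ and two path neighbours), not $3$. Mutations away from $x_1$ cannot do this either. For example, if $Q^1=u - x_1 - w$, every vertex other than $x_1$ is a leaf, and mutating at a leaf only recolours its arrow, so the underlying graph never changes.

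No other argument can rescue this case, because the statement is false there. Take $Q^1$ to be the path $u'-u-x_1-w-w'$. Then $Q$ satisfies the literal wording of Type I with $r=0$, $k=1$. But $Q$ is a tree on $7$ vertices with a vertex of valency $4$, hence neither Dynkin nor extended Dynkin. By the tree lemma of Section~\ref{grafo subyacente} and Torkildsen's theorem, its mutation class is infinite, whereas that of $\mathbb{D}_7$ is finite. This also contradicts Lemma~\ref{unico arbol} as written.

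What is missing is a hypothesis: each $x_i$, $y_j$ must be a connecting vertex of its $Q^i$, meaning its neighbours in $Q^i$ form, together with it, a single clique. This is explicit in Vatne's type I for $m=1$. The paper uses it tacitly in the proof of Lemma~\ref{unico arbol}, and whenever it writes $Q^i$ as a path $y_i - y_i^1 - \cdots - y_i^{t_i}$ starting at $y_i$, including in the lemmas you invoke for $r+k>1$.

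Under that hypothesis, $Q^1$ can be reduced to a path ending at $x_1$ by mutations away from $x_1$; this is how the paper uses \cite[Lemma 4.3]{GR}. Your steps 1--4 then finish the proof. You should drop the fallback and state this hypothesis instead.
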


\vspace*{.5cm}

Now, we are ready to prove one of the main results of this paper.\\

\begin{teo}
An $m$-coloured connected quiver $Q$ is mutation equivalent to $\mathbb{D}_n$ if and only if it belongs to  the class $\mathcal{Q}_{n,m}^D$.
\end{teo}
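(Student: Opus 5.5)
The proof splits into the two implications, and both reduce to results already established in the paper.

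For the ``only if'' direction, let $Q$ be connected and mutation equivalent to $\mathbb{D}_n$. Then $Q$ is obtained from the standard coloured $\mathbb{D}_n$-quiver (all arrows of colour $0$, with their colour-$m$ partners) by a finite sequence of coloured mutations. The first step is to place this base quiver in $\mathcal{Q}_{n,m}^D$. It is a tree of type $\mathbb{D}_n$, hence a quiver of type I with $r=0$ and $k=1$: the two leaves $n-1$ and $n$ play the roles of $a$ and $b$, and $x_1=n-2$. Lemma \ref{unico arbol} gives exactly this identification. The remaining conditions of type I hold trivially: $1\le r+k\le m+1$, the complement is the $\mathbb{A}_{n-3}$-path $1\cdots n-3$ attached at $x_1$, and there are no triangles or $4$-cycles. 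Proposition \ref{cerrado por mutaciones} then shows that every quiver reached by the mutation sequence stays in $\mathcal{Q}_{n,m}^D$, and mutation preserves the number of vertices, so $Q\in\mathcal{Q}_{n,m}^D$.

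One subtlety concerns what ``mutation equivalent'' means for $m\ge 2$, since mutation need not be invertible in general. The class is defined by sequences in both directions. If $Q$ is reached from $\mathbb{D}_n$, membership follows as above. For the converse direction within the class, Lemma \ref{mutacion invertible} gives $\mu_v^{m+1}=\mathrm{id}$ on $\mathcal{Q}_{n,m}^D$, so every mutation has the inverse $\mu_v^{m}$. The relation is therefore symmetric on this class, and ``reachable from $\mathbb{D}_n$'' coincides with ``mutation equivalent to $\mathbb{D}_n$''.

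For the ``if'' direction, let $Q\in\mathcal{Q}_{n,m}^D$. If $Q$ is of type II, Lemma \ref{tipo II --- Dn} shows it is mutation equivalent to $\mathbb{D}_n$. If $Q$ is of type I, Corollary \ref{tipo I --- Dn} does the same: the case $r+k=1$ comes from \cite[Lemma 4.3]{GR}, and the case $r+k>1$ reduces to type II by Lemma \ref{tipo I eq a tipo II}.

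The sequences built in those lemmas go from $Q$ towards $\mathbb{D}_n$. Again Lemma \ref{mutacion invertible} lets me reverse each step inside the class, which yields sequences in both directions and hence genuine mutation equivalence.

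The only delicate point is this invertibility and closure bookkeeping, which Proposition \ref{cerrado por mutaciones} and Lemma \ref{mutacion invertible} handle. Everything else is an assembly of the previous lemmas.
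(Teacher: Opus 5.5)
Your proposal is correct and is essentially the paper's proof. The standard $\mathbb{D}_n$-quiver is of type I with $r=0$, $k=1$ (your direct check of the conditions is what establishes this, since Lemma \ref{unico arbol} actually states the converse inclusion); closure under mutation (Proposition \ref{cerrado por mutaciones}) then gives the ``only if'' direction, and Lemma \ref{tipo II --- Dn} together with Corollary \ref{tipo I --- Dn} gives the ``if'' direction. Your explicit appeal to Lemma \ref{mutacion invertible}, which turns the one-directional mutation sequences built in those lemmas into a symmetric relation, is a step the paper leaves implicit, and it is worth keeping.
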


\begin{proof}

Obviously, all colourations of an  $\mathbb{D}_n$-quiver are in $\mathcal{Q}_{n,m}^D$. Let $Q$ be a coloured quiver belonging to $\mathcal{Q}_{n,m}^D$  which is not a  $\mathbb{D}_n$-quiver.  It follows upon applying Lemma \ref{tipo II --- Dn} and Corollary \ref{tipo I --- Dn} that $Q$ is mutation equivalent to a  $\mathbb{D}_n$-quiver.  The proof is completed by noting that, by Proposition \ref{cerrado por mutaciones}, the class $\mathcal{Q}_n^m$ is closed under coloured mutation.
\end{proof}

\begin{ejem}\label{ejemplo de un carcaj de tipo D15}
 We conclude this section by presenting examples of $m$-coloured quivers in the class $\mathcal{Q}_{n,m}^D$. Specifically, we provide an example for type II followed by one for type I in the class $\mathcal{Q}_{14,2}^D$.
 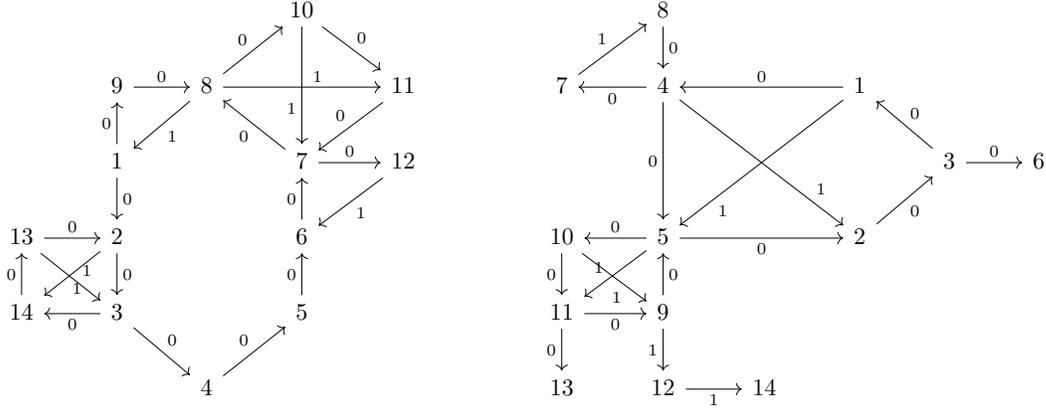
\begin{figure}[H]
\begin{center}

\adjustbox{scale=.9,center}{
\begin{tikzcd}
	&&& 10 &&&& 8 &&&& \\
	& 9 & 8 && 11 && 7 & 4 && 1 \\
	& 1 && 7 & 12 &&&&&& 3 & 6 \\
	13 & 2 && 6 &&& 10 & 5 && 2 \\
	14 & 3 && 5 &&& 11 & 9 \\
	&& 4 &&&& 13 & 12 & 14
	\arrow["0", from=1-4, to=2-5]
	\arrow["1"'{pos=0.7}, from=1-4, to=3-4]
	\arrow["0", from=1-8, to=2-8]
	\arrow["0", from=2-2, to=2-3]
	\arrow["0", from=2-3, to=1-4]
	\arrow["1"{pos=0.6}, from=2-3, to=2-5]
	\arrow["1", from=2-3, to=3-2]
	\arrow["0"', from=2-5, to=3-4]
	\arrow["1", from=2-7, to=1-8]
	\arrow["0", from=2-8, to=2-7]
	\arrow["0"', from=2-8, to=4-8]
	\arrow["1"{pos=0.8}, from=2-8, to=4-10]
	\arrow["0"', from=2-10, to=2-8]
	\arrow["1"{pos=0.8}, from=2-10, to=4-8]
	\arrow["0", from=3-2, to=2-2]
	\arrow["0", from=3-2, to=4-2]
	\arrow["0", from=3-4, to=2-3]
	\arrow["0", from=3-4, to=3-5]
	\arrow["1", from=3-5, to=4-4]
	\arrow["0"', from=3-11, to=2-10]
	\arrow["0", from=3-11, to=3-12]
	\arrow["0", from=4-1, to=4-2]
	\arrow["1"{pos=0.6}, from=4-1, to=5-2]
	\arrow["1"{pos=0.6}, from=4-2, to=5-1]
	\arrow["0", from=4-2, to=5-2]
	\arrow["0", from=4-4, to=3-4]
	\arrow["0"', from=4-7, to=5-7]
	\arrow["1"'{pos=0.7}, from=4-7, to=5-8]
	\arrow["0"', from=4-8, to=4-7]
	\arrow["0"', from=4-8, to=4-10]
	\arrow["1"'{pos=0.6}, from=4-8, to=5-7]
	\arrow["0"', from=4-10, to=3-11]
	\arrow["0", from=5-1, to=4-1]
	\arrow["0", from=5-2, to=5-1]
	\arrow["0", from=5-2, to=6-3]
	\arrow["0", from=5-4, to=4-4]
	\arrow["0"', from=5-7, to=5-8]
	\arrow["0"', from=5-7, to=6-7]
	\arrow["0"', from=5-8, to=4-8]
	\arrow["1"', from=5-8, to=6-8]
	\arrow["0", from=6-3, to=5-4]
	\arrow["1"', from=6-8, to=6-9]
\end{tikzcd}}

\vspace*{0.5cm}

\caption{Example of $2$-coloured quivers in the class $\mathcal{Q}_{14,2}^D$.}
\label{carcaj de tipo D15}
\end{center}
\end{figure}

\end{ejem}

\section{The $0$-coloured part of a quiver in the mutation class of $\mathbb{D}_n$}

As in the $\mathbb{A}_n$-case,  the $0$-coloured part of a quiver in the mutation class of $\mathbb{D}_n$ plays an important role in the study of the $m$-cluster tilted algebra of type $\mathbb{D}_n$. In fact, every quiver of an $m$-cluster tilted algebras of type $\mathbb{D}_n$  can be obtained  by considering the $0$-coloured part of the  quiver in the class $\mathcal{Q}_{n,m}^D$. \\

In particular, the lengths of the cycles in the $0$-coloured part bring information about the possible cycles that an $m$-cluster tilted algebra of type $\mathbb{D}_n$ can have.  \\

 By \cite{GQP} if $Q'$ is the $0$-coloured part of an $m$-coloured quiver $Q$ in the mutation class of $\mathbb{A}_{n}$, then $Q'$ admits only cycles of length $m+2$. The following example shows that this is no longer true for the mutation class of  $\mathbb{D}_n$. In fact, the  quiver of an $m$-cluster tilted algebra of type $\mathbb{D}_n$ can have either one non-oriented cycle of arbitrary length $t \leq n$  or an oriented cycle of lenght $m+3$; and any other cycle must be oriented and have length $m+2$. \\

\begin{ejem}By computing the $0$-coloured part of the $2$-coloured quivers in Figure \ref{carcaj de tipo D15}, we obtain the quivers shown below, which correspond to $2$-cluster tilted algebras of type $\mathbb{D}_{14}$. Note that the resulting quiver may be disconnected.

\begin{figure}[H]
\begin{center}

\adjustbox{scale=.9,center}{
\begin{tikzcd}
	&&& 10 &&&& 8 &&&& \\
	& 9 & 8 && 11 && 7 & 4 && 1 \\
	& 1 && 7 & 12 &&&&&& 3 & 6 \\
	13 & 2 && 6 &&& 10 & 5 && 2 \\
	14 & 3 && 5 &&& 11 & 9 \\
	&& 4 &&&& 13 & 12 & 14
	\arrow[ from=1-4, to=2-5]
	\arrow[ from=1-8, to=2-8]
	\arrow[ from=2-2, to=2-3]
	\arrow[ from=2-3, to=1-4]
	\arrow[ from=2-5, to=3-4]
	\arrow[ from=2-8, to=2-7]
	\arrow[ from=2-8, to=4-8]
	\arrow[ from=2-10, to=2-8]
	\arrow[ from=3-2, to=2-2]
	\arrow[ from=3-2, to=4-2]
	\arrow[ from=3-4, to=2-3]
	\arrow[ from=3-4, to=3-5]
	\arrow[ from=3-11, to=2-10]
	\arrow[ from=3-11, to=3-12]
	\arrow[ from=4-1, to=4-2]
	\arrow[ from=4-2, to=5-2]
	\arrow[ from=4-4, to=3-4]
	\arrow[ from=4-7, to=5-7]
	\arrow[ from=4-8, to=4-7]
	\arrow[ from=4-8, to=4-10]
	\arrow[ from=4-10, to=3-11]
	\arrow[ from=5-1, to=4-1]
	\arrow[ from=5-2, to=5-1]
	\arrow[ from=5-2, to=6-3]
	\arrow[ from=5-4, to=4-4]
	\arrow[ from=5-7, to=5-8]
	\arrow[ from=5-7, to=6-7]
	\arrow[ from=5-8, to=4-8]
	\arrow[ from=6-3, to=5-4]
\end{tikzcd}}

\vspace*{0.5cm}

\caption{The  $0$-coloured part of the quivers of Figure \ref{carcaj de tipo D15}.}
\end{center}
\end{figure}
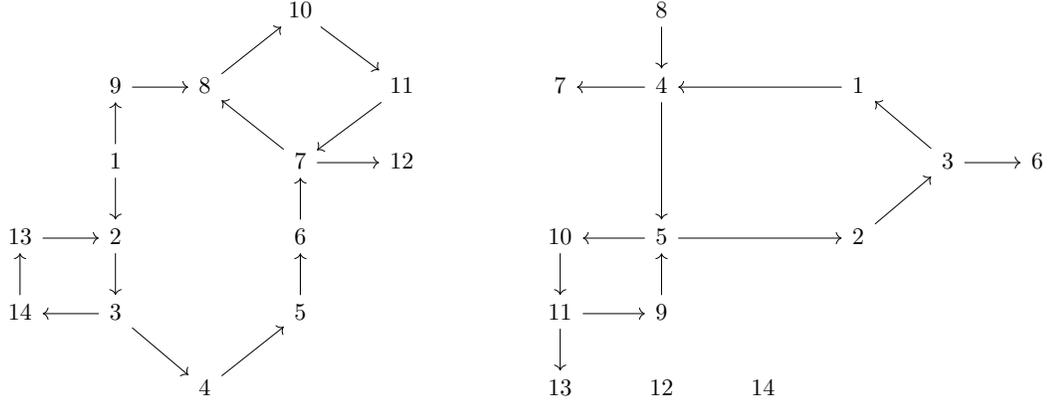

\end{ejem}

We recall in the following remark an important property about the  $0$-coloured part of a quiver in the mutation class of $\mathbb{A}_{n}$.\\

\begin{obs}\cite[Corollary 7.1]{GQP}\label{m+2 ciclos del An} Let $Q$ be an $m$-coloured quiver in the mutation class of $\mathbb{A}_{n}$ and consider its $0$-coloured part $Q'$. Suppose there is a $k$-cycle $c=(x_1x_2\cdots x_k)$ in $Q'$. Then, $k=m+2$.
\end{obs}

We finish this section with two theorems which characterise  the connected quivers of an $m$-cluster tilted algebra of type $\mathbb{D}_n$. \\

\begin{teo} Let $Q$ be a connected quiver. Then $Q$ is a connected component of  the quiver of an $m$-cluster tilted algebra of type $\mathbb{D}_n$ of subtype I if and only if $Q$ is an induced subquiver of a quiver $\tilde{Q}$ that satisfies the following conditions:

\begin{enumerate}
\item $\tilde{Q}$ contains a subgraph $S$ which is an oriented cycle  of length $m + 3$;
 \item  there exist exactly two non-adjacent vertices $a, b$ in $S$ such that all neighbours of $a$ and $b$ necessarily belong to $S$;
 \item for every vertex $x\in \tilde{Q}_0$ the sets $s^{-1}(x)$ and $t^{-1}(x)$ have cardinality at most two;
  \item the only other possible cycles which can occur in $\tilde{Q}$  are oriented cycles of length $m+2$, provided they do not share any arrows with $S$.
\end{enumerate}

\end{teo}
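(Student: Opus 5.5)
The proof rests on \cite[Corollary 7.2]{BT}: the Gabriel quiver of an $m$-cluster tilted algebra of type $\mathbb{D}_n$ is the $0$-coloured part of a coloured quiver in the mutation class of $\mathbb{D}_n$. By Theorem A, that mutation class is exactly $\mathcal{Q}_{n,m}^D$. I read ``subtype I'' as meaning that the coloured quiver is of type I in Definition \ref{la clase}. The statement therefore reduces to a purely combinatorial claim: the connected components of $0$-coloured parts of type I quivers are exactly the induced subquivers of quivers $\tilde Q$ satisfying (1)--(4).

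\emph{Necessity.} Let $Q\in\mathcal{Q}_{n,m}^D$ be of type I, with notation as in Definition \ref{la clase}.
\begin{enumerate}
\item First I treat the core ${}^aQ^b[x_1,\dots,x_k]\cup{}^aQ^b[y_1,\dots,y_r]$. Theorem \ref{theo-coloracion ciclos} together with Lemma \ref{coloracion de un ciclo} show that its $0$-coloured arrows can only close up along an $(m+3)$-cycle through $a$ and $b$ of weight $0$. By Corollary \ref{1<r+k=m+1 entonces flecha de color 0}, such a cycle exists precisely when $r+k=m+1$, and then it is oriented in the $0$-part.
\item When $r+k<m+1$, no Hamiltonian cycle has weight $0$. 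In that case I want to exhibit the $0$-part of the core as an induced subquiver of an $(m+3)$-cycle. I would do this by adding dummy vertices to the quasi-complete pieces until $r+k=m+1$, colouring them so that every triangle and $4$-cycle condition still holds. Since $\mathcal{Q}_{m}^D$ is defined by local conditions, the enlarged quiver is again of type I.
\item Since $a$ and $b$ are joined only to the $x_i$ and $y_j$, condition (2) follows.
\item Condition (4) follows from Remark \ref{m+2 ciclos del An} applied to the attached $\mathbb{A}$-pieces $Q^i$. These pieces meet the core only in single vertices, so they share no arrows with $S$.
\item For condition (3), I use the distinct-colour facts: Proposition \ref{Propiedades de la clase del An}(c) and Proposition \ref{colores de los 3 ciclos}. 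At each vertex $v$, the outgoing colours within any one clique or quasi-clique containing $v$ are distinct. Vertex $v$ lies in at most two such pieces, so it has at most two outgoing $0$-arrows, and dually at most two incoming ones.
\end{enumerate}
Finally, passing to a connected component of the $0$-part gives the required induced subquiver.

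\emph{Sufficiency.} Given $\tilde Q$ satisfying (1)--(4), I reconstruct a type I coloured quiver whose $0$-part is $\tilde Q$.
\begin{enumerate}
\item Put $S=(z_1\dots z_{m+3})$, and call $a,b$ its two distinguished vertices.
\item Declare the vertices of $S$ other than $a,b$ to be the $x$'s and $y$'s, split according to the two arcs of $S$ between $a$ and $b$. This gives $r+k=m+1$.
\item Complete the core to two $[a,b]$-quasi-complete quivers. Colour the arrow between $z_i$ and $z_j$ by the path weight along $S$, namely the number of steps from $i$ to $j$ reduced suitably. This forces every triangle to have colouration $m-1$ and every $4$-cycle through $a,b$ to have colouration $m-1$, and the only new $0$-arrows are the arrows of $S$.
\item Each remaining piece of $\tilde Q$ hangs off a vertex of $S$. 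By (3) and (4) it is a tree of oriented $(m+2)$-cycles and arrows, which is the $0$-part of an $\mathbb{A}$-class quiver by \cite{GQP}. Glue these pieces on as the $Q^i$.
\end{enumerate}
Since induced subquivers of $0$-parts arise as components after mutation-free restriction, this direction is complete.

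The main obstacle is step 2 of necessity: proving that the $0$-part of a core with $r+k<m+1$ genuinely embeds in an $(m+3)$-cycle with the degree constraints preserved. My first attempt will be to show directly that each vertex of the core has at most one outgoing $0$-arrow inside the core, and that these arrows form a union of paths ordered compatibly with a cyclic ordering through $a$ and $b$. I would derive this from the colouration $m+3-l$ of cycles in Theorem \ref{theo-coloracion ciclos}.
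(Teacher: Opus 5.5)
You follow the paper's reduction (\cite[Corollary 7.2]{BT} plus Theorem A) and its outline for necessity: Corollary \ref{1<r+k=m+1 entonces flecha de color 0} for the $(m+3)$-cycle, Remark \ref{m+2 ciclos del An} for (4), and distinct colours for (3). The genuine gap is that you never treat $r=0$ separately, and there your steps 1 and 5 and your fallback are false.

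Step 5 needs the outgoing colours of a vertex inside one quasi-complete piece to be distinct. But $\overline{x_ia}\neq\overline{x_ib}$ is forced only when $r,k\geq 1$ (Remark \ref{no puedo tener 2 flechas de color cero hacia a y b}). Colour the three arrows leaving $n-2$ in the $\mathbb{D}_n$ tree by $0$. This quiver is in the class by the tree lemma of Section \ref{grafo subyacente}; for $m=2$, $n=4$ it is the first quiver of the $\mathbb{D}_4$ example. It is of type I with $r=0$, $k=1$. Its $0$-part is the quiver of an $m$-cluster tilted algebra and has $|s^{-1}(n-2)|=3$, so it is an induced subquiver of no $\tilde Q$ satisfying (3). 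Thus (3) cannot be proved for all type I quivers, and the \emph{only if} direction fails as stated. The paper's appeal to Proposition \ref{colores de los 3 ciclos} for (3) has the same blind spot; the $r=0$ configurations must be excluded or (3) weakened.

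Step 1 also fails for $r=0$, $k=m+1$. Then $\{a,x_1,\dots,x_{m+1}\}$ and $\{b,x_1,\dots,x_{m+1}\}$ are $(m+2)$-cliques whose triangles have colouration $m-1$, so each carries a unique $0$-coloured Hamiltonian cycle. The two cycles share the $0$-arrows among the $x_i$. After relabelling, the core's $0$-part is $a\to x_1\to\cdots\to x_{m+1}\to a$ together with $b\to x_1$ and $x_{m+1}\to b$ (Vatne's type II when $m=1$). It contains no oriented $(m+3)$-cycle, and $x_{m+1}$ has two outgoing $0$-arrows in the core. Lemma \ref{coloracion de un ciclo} and Theorem \ref{theo-coloracion ciclos} cannot rescue step 1. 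They only control cycles running from $a$ to $b$ through one piece and back through the other, since their base case is $(ax_iby_j)$. For example, the triangles $(ax_1x_2)$ and $(bx_2x_1)$ force $w(ax_1bx_2a)\in\{m-2,2m,3m+2\}$, which never has colouration $m-1$. For $r,k\geq 1$ your own degree count is what works: each core vertex has at most one incoming and one outgoing $0$-arrow in the core, so the $0$-cycle of the Corollary exhausts the core when $r+k=m+1$.

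For the obstacle you flag in step 2 ($1<r+k<m+1$), the paper does not enlarge the quiver. Your enlargement gives no colouring of the dummy vertices. It also cannot work if some $\overline{x_ia}=\overline{x_ib}$, which is possible only for $r=0$. In that case no $y$ can be added, since every cycle $(yax_ib)$ would have colouration at least $m$. Adding $x$'s up to $k=m+1$ instead lands in the configuration above. The paper takes a Hamiltonian cycle $(av_1\cdots v_kbv_{k+1}\cdots v_{r+k})$ of weight $m+3-l$ and reads its nonzero arrows as gaps. An arrow of colour $c$ stands for $c+1$ zero arrows, so the $l$ arrows account for $l+(m+3-l)=m+3$. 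The zero arrows then sit inside an oriented $(m+3)$-cycle with $a,b$ non-adjacent.

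Your converse goes beyond the paper, which argues essentially nothing for \emph{if}, but it has three gaps.

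First, \emph{reduced suitably} must become an explicit colouring. One that works counts the vertices strictly between $u$ and $v$ along $S$, with $a,b$ counting once together if both lie strictly between and not at all if only one does.

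Second, step 4 fails. For $m=1$, take $S=(axby)$ and give $x$ an extra in-neighbour $w$ and an extra out-neighbour $u$. This satisfies (1)--(4) and is formally of type I with $r=k=1$. But $\{a,b,x,u,w\}$ spans the star $\tilde{\mathbb{D}}_4$. Full subquivers of quivers mutation equivalent to Dynkin quivers are again of that kind, so this quiver is not mutation equivalent to $\mathbb{D}_6$. The attaching vertex must be a connecting vertex of its $\mathbb{A}$-piece, as in \cite{Va}. Definition \ref{la clase} omits this, so citing Theorem A does not close the gap, and the \emph{if} direction needs amending too.

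Third, your final sentence is unjustified. An induced subquiver of a $0$-part is not automatically a connected component of the $0$-part of a quiver in some $\mathcal{Q}^D_{n',m}$. Each deletion has to be realised inside the class, e.g.\ by shrinking the core.
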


\begin{proof} We must consider the $0$-coloured part of the quivers in the class $\mathcal{Q}_{n,m}^D$ of type I.
Let $\widehat{Q}$ be such a quiver; then $\widehat{Q}$ possesses exactly two vertices, $a$ and $b$, both of which belong to two $[a,b]$-quasi-complete quivers, denoted by ${}^a\widehat{Q}^b[x_1,\dots,x_k]$ and ${}^a\widehat{Q}^b[y_1,\dots,y_r]$, with $1 \leq r+k \leq m+1$. If $1 < r+k = m+1$, Corollary \ref{1<r+k=m+1 entonces flecha de color 0} implies the existence of an oriented cycle of length $m+3$ of colour zero,  containing exactly two distinguished non-adjacent vertices, $a$ and $b$.
Both vertices have valency 2, thereby satisfying conditions $(1)$ and $(2)$. Moreover, Lemma \ref{colores de los 3 ciclos} ensures that condition $(3)$ holds, while $(4)$ follows directly from Remark \ref{m+2 ciclos del An} and condition $(2)$ of Definition \ref{la clase}.
Finally, we analyse the construction of full subquivers from the quivers described above. These arise whenever the cycle $(av_1\dots v_kbv_{k+1}\dots v_{r+k})$  necessarily contains arrows of non-zero colour. This occurs specifically when $1 < r+k < m+1$,  by Theorem \ref{theo-coloracion ciclos}.
Conversely, if $r=0$ and $1 \leq k\leq m+1$, condition $(3)$ of Definition \ref{la clase} implies the existence of  an oriented path of colour $0$ from $a$ to $b$, of the form $a \rightarrow v_1 \rightarrow \dots \rightarrow v_{k} \rightarrow b$, which also satisfies $(1)$ and $(2)$.  If the path contains arrows of non-zero colour we obtain  the required induced subquivers.

\end{proof}

\begin{obs}
It is worth noting that whenever a subquiver of $S$ is considered, the obtained quivers are also quivers of $m$-cluster tilted algebras of type $\mathbb{A}$.
\end{obs}

\begin{ejem} If we compute the $0$-coloured part of the  induced subquiver of type I of Figure \ref{carcaj de tipo D15}  generated by the set of vertices $\{1, \dots,  8, 13, 14\}$, we obtain the quiver of a connected component of a $2$-cluster tilted algebra of type  $\mathbb{D}_{14}$ of subtype I which is also a $2$-cluster tilted algebra of type  $\mathbb{A}_{14}$.

\begin{figure}[H]
\begin{center}
\adjustbox{scale=.8,center}{

\begin{tikzcd}
	&& 8 & \\
	& 1 && 7 \\
	13 & 2 && 6 \\
	14 & 3 && 5 \\
	&& 4
	\arrow[from=2-2, to=3-2]
	\arrow[from=2-4, to=1-3]
	\arrow[from=3-1, to=3-2]
	\arrow[from=3-2, to=4-2]
	\arrow[from=3-4, to=2-4]
	\arrow[from=4-1, to=3-1]
	\arrow[from=4-2, to=4-1]
	\arrow[from=4-2, to=5-3]
	\arrow[from=4-4, to=3-4]
	\arrow[from=5-3, to=4-4]
\end{tikzcd}}

\vspace*{.5cm}

\caption{An example of a $2$-cluster tilted algebra simultaneously of type $\mathbb{D}_{14}$ and $\mathbb{A}_{14}$.}
\end{center}
\end{figure}
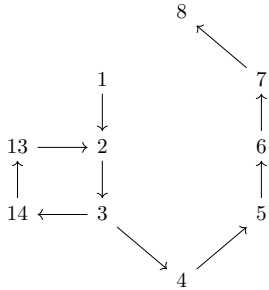

\end{ejem}
Considering a quiver that contains a cycle as shown below; we may fix a planar embedding of it to distinguish between clockwise and counterclockwise oriented arrows. However, it should be noted that this convention is unique only up to reflection, as reversing the embedding would swap the roles of the clockwise and counterclockwise orientations.

\begin{teo} Let $Q$ be a connected quiver.  Then $Q$ is a connected component of  the quiver of an $m$-cluster tilted algebra of type $\mathbb{D}_n$ of subtype II if and only if $Q$ is an induced subquiver of a quiver $\tilde{Q}$ that satisfies the following conditions:

\begin{enumerate}
\item $\tilde{Q}$ contains a non-oriented cycle $\mathcal{C}$  possessing $(m+1)r-(m-1)$ clockwise arrows and at least one counterclockwise arrow.
    The clockwise arrows are partitioned into $r$ disjoint groups, where the $i$-th group consists of $m+1-k_i$ consecutive arrows. The positive integers $k_i$ satisfy: $$\sum_{i=1}^r k_i = m - 1, \quad \text{for } 1 \leq r \leq m - 1.$$

  \item for every vertex $x\in \tilde{Q}_0$ the sets $s^{-1}(x)$ and $t^{-1}(x)$ have cardinality at most two;
  \item the only other possible cycles which can occur in $\tilde{Q}$  are oriented cycles of length $m+2$  that do not include any of the clockwise arrows of $\mathcal{C}$.
\end{enumerate}

Furthermore, $Q$ is obtained from $\tilde{Q}$ by removing $t$ of the $r$ groups of consecutive clockwise arrows, along with any resulting isolated vertices, where $0 \leq t \leq r$.

\end{teo}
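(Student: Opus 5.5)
We follow the pattern of the previous theorem: by \cite[Corollary 7.2]{BT} and Theorem A, the quivers of $m$-cluster tilted algebras of type $\mathbb{D}_n$ are exactly the $0$-coloured parts of quivers in $\mathcal{Q}_{n,m}^D$. Subtype II refers to those coming from type II quivers of Definition \ref{la clase}. So the task is to describe the $0$-coloured part $\widehat{Q}_0$ of a type II quiver $\widehat{Q}$ with central cycle $\mathcal{O}=(x_1\cdots x_k)$, complete quivers $Q_{\alpha_i}$ attached to the central arrows, and $\mathbb{A}$-pieces $Q^j$.

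\textbf{Key counting step.} Orient $\mathcal{O}$ so that $w(x_1\cdots x_k)=m-1$ in one direction; call this direction clockwise. Write $c_i=\overline{x_ix_{i+1}}$, so $\sum c_i=m-1$. The reverse arrow $x_{i+1}\to x_i$ has colour $m-c_i$, which is $0$ only when $c_i=m$, and that is impossible since $\sum c_i=m-1$. Hence the $0$-coloured counterclockwise arrows of $\mathcal{O}$ do not exist, and I would pass to the $0$-coloured part of the enlarged cycle obtained by routing around the complete quivers. When $Q_{\alpha_i}$ is present, condition (5) (together with condition (4) if the weight is not $m-1$) forces every triangle $(x_ix_{i+1}z)$ to have colouration $m-1$. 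Then, as in the proof of Theorem \ref{theo-coloracion ciclos}, the colours around $Q_{\alpha_i}$ can be distributed so that replacing the arrow $x_i\to x_{i+1}$ by a path through the $z_j^{\alpha_i}$ yields a longer cycle. The plan is to show the following: each block of consecutive colour-$0$ clockwise arrows bounded by arrows of positive colour $k_i$ extends, through the complete quivers, to a path of $m+1-k_i$ arrows of colour $0$. This should give a total of $\sum (m+1-k_i)=(m+1)r-(m-1)$ clockwise arrows, and the positive-colour arrows and their complements supply at least one counterclockwise arrow of colour $0$. I expect this step to be the main obstacle. The identification of the blocks with the integers $k_i$ and the exact arithmetic $\sum k_i=m-1$, with $1\le r\le m-1$, must be extracted from the colour distribution on $\mathcal{O}$ and the constraint $r_{\alpha_i}\le m$. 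I would first do the case where all $c_i$ are $0$ except a few, using Corollary \ref{1<r+k=m+1 entonces flecha de color 0}-style pigeonhole arguments (as in Lemma \ref{r=0 y k=m+1 entonces flecha de color 0}) on the $m+1$ possible colours out of a vertex in a complete quiver of size $m+2$.

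\textbf{Remaining conditions.} The degree bound (2) follows because, by Proposition \ref{Propiedades de la clase del An}(c) and Proposition \ref{colores de los 3 ciclos}, the out-colours from a vertex within one clique are distinct. Each vertex lies in at most two cliques, so it has at most two outgoing and, by skew-symmetry, at most two incoming $0$-arrows. Condition (3) follows from Remark \ref{m+2 ciclos del An} applied to the $\mathbb{A}$-pieces and the complete quivers $Q_{\alpha_i}$: their $0$-cycles are oriented of length $m+2$. These cycles cannot use a clockwise arrow of $\mathcal{C}$, since such an arrow is already used in the weight-$(m-1)$ triangle structure.

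\textbf{Converse and removal of groups.} Given $\tilde Q$ as in the statement, reverse the construction. Assign colour $0$ to all arrows, assign colour $k_i$ to the arrow closing the $i$-th group, and realise each oriented $(m+2)$-cycle as the $0$-part of a complete quiver coloured as in \cite{GQP}. Then check conditions (1)–(6) of Definition \ref{la clase}; condition (6), for $k=3$, is the only delicate one. Finally, removing $t$ groups corresponds to taking the induced subquiver in which the relevant central arrows carry nonzero colour, exactly as in the proof of the subtype I theorem, via the mutation-equivalence lemmas of Section 6.
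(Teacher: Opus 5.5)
\textbf{There is a genuine gap.} Your framing is the paper's: take $0$-coloured parts of type II quivers and read off the structure around $\mathcal{O}$. You even mention the right move, replacing $x_i\to x_{i+1}$ by a path through the $z^{\alpha_i}_j$. But the plan you then commit to misidentifies where the $r$ groups come from, and the key step is left open.

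\emph{Where the groups come from.} The blocks of colour-$0$ arrows of $\mathcal{O}$ in the weight-$(m-1)$ direction do not ``extend through the complete quivers''. They survive unchanged in the $0$-part, and their total number $k-r$ is arbitrary. They are the arrows of $\mathcal{C}$ with the \emph{opposite} orientation, which is why the paper draws the weight-$(m-1)$ direction counterclockwise.

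What gets replaced is each arrow $\alpha_i\colon x_j\to x_{j+1}$ of $\mathcal{O}$ of positive colour $k_i$. It is absent from the $0$-part, and $Q_{\alpha_i}$ may supply a colour-$0$ path from $x_{j+1}$ back to $x_j$. Together with $\alpha_i$ this path is a cycle of weight $k_i$ in a complete quiver of the $\mathbb{A}$-class. Since a $d$-cycle there has colouration $m+2-d$ \cite[Theorem 6.7]{GQP}, the path has exactly $m+1-k_i$ arrows. Condition (5) of Definition \ref{la clase} is what forces it to run opposite to the weight-$(m-1)$ orientation of $\mathcal{O}$.

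So the $k_i$ are simply the nonzero colours $c_i$ on $\mathcal{O}$. Then $\sum k_i=m-1$ is immediate from $w(\mathcal{O})=m-1$, and no pigeonhole argument is needed. Your claim that the positive-colour arrows ``and their complements supply at least one counterclockwise arrow of colour $0$'' contradicts your own correct observation that $m-c_i\neq 0$.

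\emph{How the error propagates.} Removing $t$ groups corresponds to $Q_{\alpha_i}$ being absent or too small to contain the full $0$-path from $x_{j+1}$ to $x_j$. It has nothing to do with central arrows ``carrying nonzero colour'' (the $\alpha_i$ always do), nor with the mutation lemmas of Section 6.

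For condition (3), ``already used in the triangle structure'' is not an argument. The actual reason no $0$-coloured oriented $(m+2)$-cycle uses a clockwise arrow is that condition (5) gives $|Q_{\alpha_i}|\le m+2-k_i<m+2$ whenever $k_i\ge 1$. Indeed, in a clique of size $m+2$ the colour-$0$ Hamiltonian cycle would pass through $k_i$ vertices $z$ between $x_j$ and $x_{j+1}$. Each such $z$ gives a triangle $x_j\to x_{j+1}\to z\to x_j$ of weight $2m+1$ rather than $m-1$. Hence such cycles occur only in the $\mathbb{A}$-pieces or in cliques over colour-$0$ central arrows.

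In the converse, there is no ``arrow closing the $i$-th group'' in $\tilde Q$. You must add $\alpha_i$, of colour $k_i$, from the last vertex of the group to the first, and complete the group's vertices to a clique $Q_{\alpha_i}$.
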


\begin{proof}

We must consider the $0$-coloured part of the quivers in the class $\mathcal{Q}_{n,m}^D$ of type II.
Let $\widehat{Q}$ be such a quiver; then $\widehat{Q}$ has a full subquiver $\mathcal{O}$, called the  \textit{central cycle}, which is an induced $k$-cycle $(x_1\cdots x_k)$ with $k\geq 3$ and colouration $\kappa(\mathcal{O})=m-1$.

Suppose that $w(x_1 \dots x_k) = m-1$ and that this orientation is drawn counterclockwise. By writing $m-1 = \sum_{i=1}^r k_i$, we obtain $r$ arrows $\alpha_i$ in the cycle $\mathcal{O}$, with colouration $\kappa(\alpha_i) = k_i$ for $i = 1, \dots, r$.  According to \cite[Theorem 6.7]{GQP}, $d$-cycles have a colouration of $m+2-d$; thus, cycles of length $m+2-k_i$ are coloured by $k_i$. Consequently, for each $i$, there may exist $m+1-k_i$ consecutive arrows of colour $0$ which, together with the arrow $\alpha_i$, form such a cycle. By condition (5) of Definition \ref{la clase}, these arrows must be oriented clockwise. Following this construction for each $i$, we obtain a total of $
\sum_{i=1}^r (m+1-k_i) = (m+1)r - \sum_{i=1}^r k_i = (m+1)r - (m-1)$
clockwise arrows. Upon removing all arrows $\alpha_i$ of non-zero color $k_i$, we recover the cycle $\mathcal{C}$ described in (1).

It is worth noting that (2) and (3) follow directly from conditions (2), (3) and (4) of Definition \ref{la clase}. Furthermore, if these cycles are not fully formed for every arrow $\alpha_i$, deleting the non-zero coloured arrows yields the induced subquivers mentioned above.

\end{proof}

\begin{ejem} An illustration of the aforementioned type is provided for $m=4$, considering the decomposition $3=1+2$.

\begin{figure}[H]
\begin{center}
\adjustbox{scale=.8,center}{
\begin{tikzcd}
	&& 8 &&&&&& 8 && \\
	& 1 && 7 &&&& 1 && 7 \\
	13 & 2 && 6 & {} && 13 & 2 && 6 \\
	14 & 3 && 5 & 11 && 14 & 3 && 5 & 11 \\
	&& 4 &&&&&& 4 \\
	&& 9 && 10 &&&& 9 && 10
	\arrow["0", from=1-3, to=2-2]
	\arrow[from=1-9, to=2-8]
	\arrow["0", from=2-2, to=3-2]
	\arrow["0", from=2-4, to=1-3]
	\arrow[from=2-8, to=3-8]
	\arrow[from=2-10, to=1-9]
	\arrow["0", from=3-1, to=3-2]
	\arrow["2", from=3-2, to=4-2]
	\arrow["0", from=3-4, to=2-4]
	\arrow[from=3-7, to=3-8]
	\arrow[from=3-10, to=2-10]
	\arrow["0", from=4-1, to=3-1]
	\arrow["1"{pos=0.2}, from=4-1, to=3-2]
	\arrow["1"'{pos=0.6}, from=4-2, to=3-1]
	\arrow["0", from=4-2, to=4-1]
	\arrow["0", from=4-2, to=5-3]
	\arrow["0", from=4-4, to=3-4]
	\arrow["0", from=4-4, to=4-5]
	\arrow["2"{description}, from=4-4, to=6-3]
	\arrow["2"{description}, from=4-5, to=5-3]
	\arrow["1"{description}, from=4-5, to=6-3]
	\arrow["0", from=4-5, to=6-5]
	\arrow[from=4-7, to=3-7]
	\arrow[from=4-8, to=4-7]
	\arrow[from=4-8, to=5-9]
	\arrow[from=4-10, to=3-10]
	\arrow[from=4-10, to=4-11]
	\arrow[from=4-11, to=6-11]
	\arrow["1", from=5-3, to=4-4]
	\arrow["0"', from=6-3, to=5-3]
	\arrow["3"{description}, from=6-5, to=4-4]
	\arrow["1"{description}, from=6-5, to=5-3]
	\arrow["0", from=6-5, to=6-3]
	\arrow[from=6-9, to=5-9]
	\arrow[from=6-11, to=6-9]
\end{tikzcd}}

\vspace*{.5cm}
\caption{A $4$-coloured quiver from $\mathcal{Q}_{13,4}^D$ (left) and the resulting $0$-coloured subquiver (right).}
\end{center}
\end{figure}
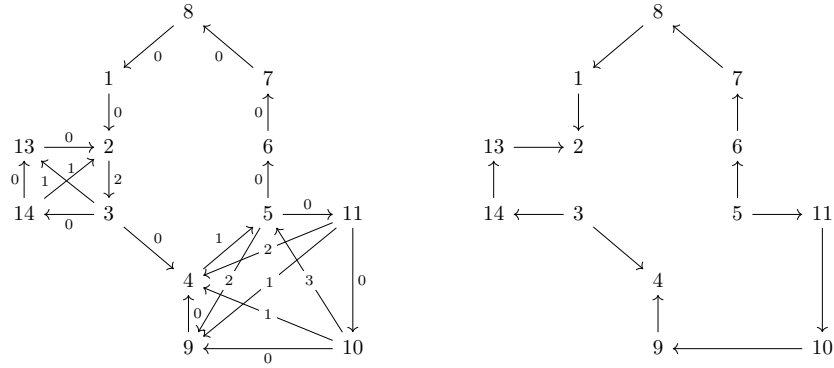

\end{ejem}

\section*{Data availability}
Data sharing not applicable to this article as no datasets were generated or analysed during the current study.

\section*{Conflict of interest}
There are no conflicts of interests or competing interests.

\section*{Acknowledgements}
The authors gratefully  acknowledge financial support from CSIC (\textit{Comisi\'on Sectorial de Investigaci\'on Cient\'ifica}) of Uruguay (grant no. 22520220100067UD).

\end{document}